\newtheorem{theorem}{Theorem}[section]
\newtheorem{lemma}[theorem]{Lemma}
\newtheorem{corollary}[theorem]{Corollary}
\newtheorem{proposition}[theorem]{Proposition}
\theoremstyle{definition}
\newtheorem{definition}[theorem]{Definition}
\newtheorem{example}[theorem]{Example}
\newtheorem{remark}[theorem]{Remark}
\newtheorem{question}[theorem]{Question}
\newtheorem{conjecture}[theorem]{Conjecture}
\newtheorem*{ack}{Acknowledgement}
\newcommand{\V}{\mathcal{V}}
\def\a{\alpha}
\def\l{\lambda}
\def\Q{\mathbb{Q}}
\def\Z{\mathbb{Z}}
\def\R{\mathbb{R}}
\def\C{\mathbb{C}}
\def\N{\mathbb{N}}
\def\CP{\mathbb{CP}}
\def\zt{\Z[t^{\pm 1}]}
\def\bp{\begin{pmatrix}}\def\ep{\end{pmatrix}}
\def\bn{\begin{enumerate}}\def\en{\end{enumerate}}
\def\be{\begin{equation}} \def\ee{\end{equation}}
\def\ba{\begin{array}} \def\ea{\end{array}}
\def\sm{\setminus}
\def\wti{\widetilde}
\def\qk{quasi-K\"ahler}
\def\qp{quasi-projective }
\def\qsp{quasi-projective}
\def\op{\operatorname}
\def\th{\op{th}}
\newcommand{\cv}{\check{{\mathcal V}}}
\newcommand{\BT}{\mathcal{B}_T}
\DeclareMathOperator{\im}{im}
\DeclareMathOperator{\rank}{rank}
\DeclareMathOperator{\ord}{ord}
\DeclareMathOperator{\Hom}{Hom}
\DeclareMathOperator{\const}{const}
\DeclareMathOperator{\tor}{Tors}
\DeclareMathOperator{\Newt}{Newt}
\def\set#1{{\{ #1\}}}
\newcommand{\surj}{\twoheadrightarrow}
\newcommand{\abs}[1]{\left| #1 \right|}
\begin{document}

\title[K\"{a}hler groups, quasi-projective groups, and $3$-manifold groups]
{K\"{a}hler groups, quasi-projective groups, and \\ $3$-manifold groups}

\author[Stefan Friedl]{Stefan Friedl}
\address{Mathematisches Institut, Universit\"at zu K\"oln, Germany}
\email{sfriedl@gmail.com}

\author[Alexander Suciu]{Alexander I. Suciu$^1$}
\address{Department of Mathematics, Northeastern University,
Boston, MA 02115, USA}
\email{a.suciu@neu.edu}

\thanks{$^1$Partially supported by NSF grant DMS--1010298}

\subjclass[2010]{Primary
20F34,  
32J27, 
57N10.  
Secondary
14F35, 
55N25, 
57M25. 
}

\keywords{$3$-manifold, graph manifold, K\"{a}hler manifold,
quasi-projective variety, fundamental group, Alexander polynomial,
characteristic varieties, Thurston norm.}

\date{\today}
\begin{abstract}
We prove two results relating $3$-manifold groups
to fundamental groups occurring in complex geometry.
Let $N$ be a compact, connected, orientable
$3$-manifold.  If $N$ has non-empty, toroidal
boundary, and $\pi_1(N)$ is a K\"ahler group,
then $N$ is the product of a torus with an interval.
On the other hand, if $N$ has either empty or
toroidal boundary, and $\pi_1(N)$ is a quasi-projective
group, then all the prime components of $N$ are
graph manifolds.
\end{abstract}
\maketitle

\section{Introduction and main results}
\label{sect:intro}

\subsection{}
A classical problem, going back to J.-P.~Serre, asks for a
characterization of fundamental groups of smooth projective
varieties.  Still comparatively little is known about this class
of groups.  For instance, it is not known whether it coincides
with the putatively larger class of groups that can be realized
as fundamental groups of compact K\"ahler manifolds, known
for short as K\"ahler groups.

Another, very much studied class of groups is that consisting
of fundamental groups of compact, connected, $3$-dimensional
manifolds, known for short as $3$-manifold groups.
Recent years have seen the complete validation of the
Thurston program for understanding $3$-manifolds.
This effort, begun with the proof of the Geometrization Conjecture
by Perelman, has culminated in the results of  Agol \cite{Ag12},
Wise \cite{Wi12a} and Przytycki--Wise \cite{PW12} which now
give us a remarkably good understanding of $3$-manifold
groups (see also \cite{AFW12} for more information).

In this context, a natural question arises: Which
$3$-manifold groups are K\"{a}hler groups?
This question, first raised by S.~Donaldson
and W.~Goldman in 1989, and independently
by Reznikov in 1993 (see \cite{Re02}), has led
to a flurry of activity in recent years.
In \cite{DS09} the second author and A.~Dimca
showed that if the fundamental group of a closed $3$-manifold
is K\"ahler, then the group is finite.  Alternative proofs have
since been given by Kotschick \cite{Kot12} and by
Biswas, Mj and Seshadri \cite{BMS12}, while the
analogous question for quasi-K\"{a}hler groups
was considered in \cite{DPS11}.

\subsection{}
In this paper, we pursue these lines of inquiry in two directions.
First, we determine which fundamental groups of $3$-manifolds
with non-empty, toroidal  boundary are K\"ahler.
Note that the $2$-torus is a K\"ahler manifold. We will show
that, in fact, $\Z^2$ is the only fundamental group of a
$3$-manifold with non-empty, toroidal boundary which
is also a K\"ahler group.  More precisely, we will prove
the following theorem.

\begin{theorem}
\label{mainthmk}
Let $N$ be a $3$-manifold with non-empty, toroidal boundary.
If $\pi_1(N)$ is a K\"ahler group, then
$N\cong S^1\times S^1\times [0,1]$.
\end{theorem}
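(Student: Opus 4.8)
The plan is to play the restrictions imposed on $N$ by the Thurston geometrization against the restrictions imposed on $\pi_1(N)$ by Hodge theory. First I would reduce to the case where $N$ is aspherical with incompressible boundary. If $N$ were not prime it would split as a connected sum with at least two nontrivial summands — recall $\partial N$ has no sphere components, so each prime piece with boundary, and each $S^1\times S^2$ summand, contributes a nontrivial free factor — so $\pi_1(N)$ would be a nontrivial free product; but a K\"ahler group is finite or one-ended, hence never a nontrivial free product. Thus $N$ is prime and, having nonempty boundary, irreducible. If a boundary torus were compressible then $N$ would be a solid torus, with $\pi_1(N)\cong\Z$, whose first Betti number is odd, contradicting K\"ahlerness. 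So $\partial N$ is incompressible, $\pi_1(N)$ is infinite, $N$ is aspherical, and $\pi_1(N)$ contains a peripheral $\Z^2$. Assuming henceforth that $N\not\cong T^2\times[0,1]$, I must reach a contradiction.

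Now I would bring in the K\"ahler constraints. Since $\pi_1(N)$ is K\"ahler, $b:=b_1(N)$ is even, and since $\partial N$ is a nonempty union of tori, half-lives-half-dies forces $b\ge1$, hence $b\ge2$. By Arapura's theorem $\mathcal V_1(\pi_1(N))\subseteq\Hom(\pi_1(N),\C^*)$ is a finite union of torsion-translated subtori with distinct components meeting only at torsion points; and because a holomorphic pencil on a compact K\"ahler manifold maps onto a \emph{compact} orbifold curve $C$, whose first Betti number is $2g(C)$, the component of $\mathcal V_1(\pi_1(N))$ through the trivial character is a subtorus of \emph{even} dimension. Dually, Delzant's theorem says $\Sigma^1(\pi_1(N))^c\subseteq S(H^1(N;\R))$ is a finite union of rationally defined round subspheres, each spanning a rational subspace of even dimension.

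On the $3$-manifold side, since $N$ is aspherical with incompressible toroidal boundary, Stallings' fibration theorem identifies the (symmetric) invariant $\Sigma^1(\pi_1(N))$ with the union of the open cones over the fibered faces of the Thurston norm ball and their negatives, so that $\Sigma^1(\pi_1(N))^c$ consists of the cones over the non-fibered faces and the lower-dimensional strata of the ball; and for $b\ge2$ the zero locus of the multivariable Alexander polynomial $\Delta_N$ lies in $\mathcal V_1(\pi_1(N))$ and bounds the Thurston norm from below (McMullen). The point is that for $N\not\cong T^2\times[0,1]$ these objects are genuinely large, and I would split into two cases. If $\Delta_N\ne0$, then the Alexander module is nonzero and, by the Torres-type vanishing, $\Delta_N$ is nonconstant with $1$ in its zero locus; this produces a component of $\mathcal V_1(\pi_1(N))$ through the trivial character of dimension $b-1$, which for a K\"ahler group, being even-dimensional, would have to be the entire character torus, forcing $\Delta_N=0$ — a contradiction. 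If $\Delta_N=0$, the Thurston norm is strongly degenerate, and I would appeal to the geometric decomposition of $N$ — which in this regime must involve a Seifert-fibered piece over a hyperbolic base $2$-orbifold with boundary, or an essential torus — and compute $\Sigma^1(\pi_1(N))^c$ directly, producing a rationally defined round subsphere spanning a rational subspace of the \emph{wrong} parity; this is already visible in the model $N=\Sigma\times S^1$, where $\Sigma$ is a compact orientable surface with boundary other than an annulus, $\Sigma$ fibers over $S^1$ in no direction, and $S(H^1(\Sigma;\R))\subseteq\Sigma^1(\pi_1(N))^c$ is a subsphere of codimension $1$, hence spanning a hyperplane, which has odd dimension precisely because $b_1(N)$ is even. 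Either way the K\"ahler constraints are violated, so $N\cong T^2\times[0,1]$.

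The crux, and the step I expect to resist hardest, is this last paragraph: running through the geometric decomposition of $N$ — hyperbolic pieces, Seifert pieces, nontrivial JSJ graphs — and, for each, extracting enough of the Thurston norm and characteristic-variety data to see that it cannot have the rigid ``union of even-dimensional subtori'' shape dictated by K\"ahlerness. Ruling out ``elliptic-fibration''-type coincidences for Seifert-fibered pieces carrying a central $\Z$, and dealing with the smallest cases $b_1(N)=2$ where there is the least room to maneuver, are where I would expect to have to work hardest.
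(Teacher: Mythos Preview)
Your strategy is genuinely different from the paper's. After the same opening reduction to the irreducible case, the paper makes no attempt to analyse $\V_1(N)$ or $\Sigma^1(\pi_1(N))$ for $N$ itself. Instead it invokes virtual fibering: since $N$ has nonempty toroidal boundary it is not a closed graph manifold, so by the Agol--Wise--Przytycki--Wise results $\pi_1(N)$ is virtually RFRS, and hence some finite cover $M\to N$ actually fibers over $S^1$. Then $\pi_1(M)$ is again K\"ahler, the fiber $F$ is a surface with boundary, and the three cases $\chi(F)=1,\,0,\,<0$ are each handled in a line --- the last one by playing $\th(M)\ge 1$ (Corollary~\ref{cor:mcmullen}) off against $\th(\pi_1(M))=0$ (Theorem~\ref{thm:deltakahler}). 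This buys a short, completely uniform argument at the price of importing the full virtual-fibering machinery.

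Your route tries to avoid that machinery, which is appealing, but as written both halves of your dichotomy have real gaps. In the $\Delta_N\ne 0$ branch you invoke a ``Torres-type vanishing'' to force $\Delta_N$ nonconstant with $\Delta_N(1)=0$; the classical Torres condition is particular to link exteriors in $S^3$, and for a general irreducible $3$-manifold with incompressible toroidal boundary there is no such statement to quote. (Note also that $\Delta_N\ne 0$ means the Alexander module is $\Z[H]$-\emph{torsion}, not nonzero; if it is zero then $\Delta_N$ is a unit and your argument yields nothing.) To manufacture a codimension-one subtorus in $\V_1(N)$ from the boundary you would need some boundary torus whose image in $H=H_1(N;\Z)/\tor$ has rank one; when every boundary torus injects with rank two the peripheral subtori in $\V_1$ have even codimension and give no parity obstruction. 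In the $\Delta_N=0$ branch, which you correctly flag as the crux, you have in effect only treated the product $\Sigma\times S^1$. Carrying the Delzant/BNS argument through the full JSJ decomposition --- Seifert pieces with nontrivial Euler number, hyperbolic pieces, nontrivial gluing graphs --- and handling the delicate $b_1(N)=2$ cases is substantial work that is not even sketched. The program may well be completable, but it is not yet a proof, and the paper's passage to a fibered finite cover is exactly the device that makes all of this case analysis disappear.
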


If $N$ is allowed to have non-toroidal boundary components,
other K\"ahler groups can appear.   For instance, if $\Sigma_g$
is a Riemann surface of genus $g\ge 2$, then $\pi_1(\Sigma_g\times [0,1])$
is certainly a K\"ahler group.  The complete, full-generality classification of
 K\"ahler $3$-manifold groups is still unknown to us.

\subsection{}
Next, we turn to the question, which $3$-manifold groups are \qsp,
that is, occur as fundamental groups of complements of divisors
in a smooth, complex projective variety.   Examples of $3$-manifolds
with fundamental groups that are \qp are given by the exteriors
of torus knots and Hopf links, by connected sums of $S^1\times S^2$'s,
and by Brieskorn manifolds; we refer to \cite[\S1.2]{DPS11}
for more examples.  Note that all these examples are
connected sums of graph manifolds.

We will show that this is not a coincidence:

\begin{theorem}
\label{mainthmqk}
Let $N$ be a $3$-manifold with empty or toroidal boundary.
If $\pi_1(N)$ is a \qp group, then all the prime components
of $N$ are graph manifolds.
\end{theorem}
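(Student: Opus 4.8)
The plan is to play Arapura's structure theorem for the characteristic varieties of quasi-projective varieties against the virtual fibering theory of $3$-manifolds, using that if $N$ is not a graph manifold then, by Geometrization, either $N$ is hyperbolic or its JSJ decomposition contains a hyperbolic piece $M$. So suppose, for a contradiction, that $\pi_1(N)$ is quasi-projective but $N$ is not a graph manifold. It suffices to treat prime $N$: graph-manifold-ness is detected summand by summand, $S^1\times S^2$ and spherical space forms are graph manifolds, every other prime summand (having infinite fundamental group) is aspherical and irreducible with empty or toroidal boundary, and --- free factors being retracts --- the character torus of a summand group sits inside that of $\pi_1(N)$ as a subtorus and its characteristic variety inside $\mathcal{V}^1(\pi_1(N))$, so the consequence of Arapura's theorem used below descends from $\pi_1(N)$ to the summand groups and their finite-index subgroups. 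The essential remaining freedom is that I may replace $N$ by any finite cover: finite covers of smooth quasi-projective varieties are smooth quasi-projective, so finite-index subgroups of quasi-projective groups are quasi-projective, while irreducibility, the boundary type, and the presence of a hyperbolic JSJ piece are all inherited by finite covers.

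Now the main construction. Using the virtual fibering theorems of Agol \cite{Ag12}, Wise \cite{Wi12a} and Przytycki--Wise \cite{PW12}, I pass to a finite cover $N'$ of $N$ that fibers over $S^1$ with connected fiber $F$ and monodromy $\varphi$, chosen so that the fibration restricts on the lifted hyperbolic piece to a fibration with pseudo-Anosov monodromy on $F\cap M$ --- in the purely hyperbolic case this is Agol's theorem, and in the mixed case one uses that a fibration of a mixed $3$-manifold (not a torus bundle) restricts, after isotopy, to a fibration of each JSJ piece, pseudo-Anosov on the hyperbolic ones. Next, by a theorem of Koberda --- a pseudo-Anosov map admits finite invariant covers on which the induced action on first homology has spectral radius as close as desired to the dilatation, in particular strictly greater than $1$, hence an eigenvalue of modulus $>1$ --- together with separability of (sub)surface subgroups to upgrade such a cover of $F\cap M$ to a fibration-compatible finite cover of $N'$, I pass to a further finite cover $N''$ of $N'$, still fibered with connected fiber, whose monodromy $\varphi''$ has on the first homology of its fiber an eigenvalue $\lambda$ that is not a root of unity.

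On the other hand, Arapura's theorem forces the codimension-one part of $\mathcal{V}^1(\pi_1(N''))$ --- which is the zero locus of the multivariable Alexander polynomial $\Delta_{N''}$ --- to be a union of torsion-translated subtori; since $N''$ fibers, $\Delta_{N''}$ is nonzero, and specializing it along the fibered class recovers, up to units, $\det(t\,\id-\varphi''_*)$ acting on the first homology of the fiber, whose complex roots must therefore all be roots of unity (each is a root of some $\zeta$ that is itself a root of unity). This contradicts the existence of $\lambda$, so $N$ is a graph manifold.

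The hardest part will be the mixed case, and within it two points. First, one must arrange the virtual fibration of $N$ to be genuinely compatible with the JSJ tori and nondegenerate on the hyperbolic piece --- equivalently, to extend a suitable virtual fibration of the hyperbolic piece across the JSJ tori --- which is possible because Seifert pieces virtually fiber with prescribed boundary slopes but is by no means automatic. Second, and more delicate, one must ensure that the non-root-of-unity eigenvalue $\lambda$, produced by Koberda's theorem on the first homology of the fiber $F\cap M$ of the hyperbolic sub-piece, survives as an eigenvalue of the monodromy on the first homology of the \emph{global} fiber $F$; the mechanism should be that the monodromy permutes the boundary circles of $F\cap M$ and hence acts with finite order on the kernel of $H_1(F\cap M)\to H_1(F)$, so that this kernel contains no $\lambda$-eigenvector and $\lambda$ persists in $H_1(F)$. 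Making all of this precise --- uniformly for empty and toroidal boundary and through the whole tower of finite covers --- is where the real work lies.
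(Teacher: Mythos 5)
Your approach to the irreducible case is genuinely different from the paper's. Where the paper passes to a finite cover with two inequivalent fibered faces of the Thurston norm ball and uses McMullen's Alexander--Thurston inequality (sharp on quasi-fibered classes) to force a ``thick'' Newton polytope, contradicting the one-dimensionality guaranteed by Theorem~\ref{thm:deltaquasikahler}\eqref{q1}, you instead seek a single fibered cover whose monodromy has a non-root-of-unity eigenvalue on $H_1$ of the fiber, contradicting the cyclotomic structure from Theorem~\ref{thm:deltaquasikahler}\eqref{q2}. Both contradictions flow from the same Arapura-derived input, but yours needs the additional ingredient that a pseudo-Anosov admits a finite cover on which its homological action has spectral radius $>1$. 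That is not Koberda's theorem; it is a deep theorem of Y.~Liu on virtual homological spectral radii, which postdates this paper and whose proof rests on the same Agol--Wise--Przytycki--Wise machinery you are already invoking. The paper's route via multiple fibered faces deliberately avoids this dependency. (Two smaller loose ends: you must separately arrange $b_1\ne 2$ at the end, which can be handled by first passing to a cover with $b_1\ge 3$ via Corollary~\ref{cor:big betti}; and the persistence of the eigenvalue in $H_1$ of the global fiber in the mixed case, which you flag, is indeed a nontrivial point that you leave open.)

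The serious gap is the claimed reduction to prime $N$. If $\pi=A*B$ with $b_1(A)>0$ and $b_1(B)>0$, then $\V_1(\pi)$ is the \emph{entire} character torus $\widehat{H}$: a Mayer--Vietoris computation over the wedge of Eilenberg--MacLane spaces shows $H^1(\pi,\C_\rho)\ne 0$ for every $\rho\in\widehat{H}$. Hence $\V_1(\pi)$ has no positive-codimension components, Arapura's theorem is vacuous here, and nothing ``descends'' to the factor $A$. The inclusion $\V_1(A)\hookrightarrow\V_1(\pi)$ that you invoke does not help either: a codimension-one subvariety of $\widehat{H}(A)$ has codimension $\ge 2$ inside $\widehat{H}(\pi)$ once $b_1(B)>0$, and Arapura's theorem places no constraint whatsoever on such components. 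Whether a free factor of a \qp group must again be \qp is precisely the paper's open Question~\ref{quest:free prod}; a retraction does not obviously preserve quasi-projectivity. The paper's actual treatment of the reducible case (Theorem~\ref{thm:qknonprime}) is a separate, substantial argument: a Mayer--Vietoris computation gives $\th(N)\ge\th(N_1)+\th(N_2)$ for a connected sum, and then a finite cover induced by a finite quotient of one factor (trivial on a virtually fibered factor) produces enough parallel copies of the fibered summand to push the thickness past the \qp bound. Something of this sort is unavoidable; a formal reduction does not exist.
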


In particular, the fundamental group of a hyperbolic $3$-manifold
with empty or toroidal boundary is never a \qp group.

In fact, as we shall see in \S\S\ref{section:nonprimeqk} and
\ref{sect:questions}, we can get even finer results.
For instance, if a $3$-manifold $N$ as above is not prime,
then none of its prime components is virtually fibered.
But, at the moment, the complete determination
of which $3$-manifold groups are \qp groups is still
out of reach for us.

\subsection{}

The proofs of Theorems \ref{mainthmk} and  \ref{mainthmqk}
rely on some deep results from both complex geometry and
$3$-dimensional topology.  We extract from those results
precise qualitative statements about the Alexander polynomials
of the manifolds that occur in those settings. Comparing the
answers gives us enough information to whittle down
the list of fundamental groups that may occur in the
opposite settings until we reach the desired conclusions.

Results by Arapura \cite{Ar97}, as strengthened in \cite{DPS09}
and \cite{ACM10}, put strong restrictions on the characteristic
varieties of both K\"ahler manifolds and smooth, quasi-projective
varieties.  Following the approach from \cite{DPS08}, we use
those restrictions to conclude that the respective Alexander
polynomials are ``thin":  their Newton polytopes have
dimension at most $1$.

Recent results of Agol \cite{Ag08,Ag12},
Kahn--Markovic \cite{KM12}, Przytycki--Wise \cite{PW12}
and Wise \cite{Wi12a,Wi12b} give us a good understanding
of fundamental groups of irreducible $3$-manifolds which
are not graph manifolds. Using those results, we show
that every irreducible $3$-manifold with empty or toroidal
boundary is either not a graph manifold, or it admits a finite
cover which has a ``thick'' Alexander polynomial.

We conclude this paper with a short list of open questions.

\subsection*{Convention.}
All groups are understood to be finitely presented and all
manifolds are understood to be compact, orientable and
connected, unless otherwise stated.

\section{Alexander polynomials, characteristic varieties, and thickness}
\label{section:alexpoly}

\subsection{Orders of modules}
\label{section:orders}

Let $H$ be a finitely generated, free abelian group and let
$M$ be a finitely generated module over the group ring $\Z[H]$.
Since the ring $\Z[H]$ is Noetherian, there exists a finite presentation
\begin{equation}
\label{eq:pres}
\xymatrix{\Z[H]^r \ar^{\a}[r]& \Z[H]^s \ar[r]& M \ar[r]& 0}.
\end{equation}
We can furthermore arrange that $r\geq s$, by adding zero
columns if necessary.

Given an integer $k\geq 0$, the
{\em $k$-th elementary ideal}\/ of $M$, denoted by
$E_k(M)$, is the ideal in $\Z[H]$ generated by all
minors of size $s-k$ of the matrix $\a$. The {\em $k$-th order}\/
of the module $M$ is a generator of the smallest principal ideal
in $\Z[H]$ containing $E_k(M)$; that is,
\begin{equation}
\label{eq:ordk}
\ord^k_{\Z[H]}(M)= \text{$\gcd$ of all $(s-k)\times (s-k)$-minors of $\a$}.
\end{equation}

Note that the element $\ord^k_{\Z[H]}(M)\in \Z[H]$ is
well-defined up to multiplication by a unit in $\Z[H]$;
if the ring $\Z[H]$ is understood, then we will just write $\ord^k(M)$.
Furthermore, we will write $\ord_{\Z[H]}(M):=\ord^0_{\Z[H]}(M)$.

Denote by $\tor_{\Z[H]}M$ the $\Z[H]$-torsion submodule of $M$.
Note that $\ord^0(M)\ne 0$ if and only if $M$ is a torsion
$\Z[H]$-module (see e.g.~\cite[Remark~4.5]{Tu01}).
Furthermore, denote by $r$ the rank of $M$ as a $\Z[H]$-module.
It then follows from \cite[Lemma~4.9]{Tu01} that
\begin{equation}
\label{equ:tu01}
\ord^i(M)=
\begin{cases}
0 &\text{if $i<r$}, \\[2pt]
\ord^{r-i}\big(\tor_{\Z[H]}M \big) &\text{if $i\geq r$}.
\end{cases}
\end{equation}

\subsection{The thickness of a module}
\label{section:thick}

As before, let $H$ be a finitely generated, free abelian group,
and let $M$ be a finitely generated $\Z[H]$-module.  Write
\begin{equation}
\label{eq:ord}
\ord_{\Z[H]}(\tor_{\Z[H]}M)=\sum_{h\in H}a_hh.
\end{equation}

\begin{definition}
\label{def:thick}
The \emph{thickness}\/ of the $\Z[H]$-module $M$ is
the integer
\begin{equation}
\label{eq:thick}
\th_{\Z[H]}(M):=\dim \op{span} \{ g-h\in H\otimes \Q
\mid a_g\ne 0\text{ and }a_h\ne 0\}.
\end{equation}
\end{definition}

Put differently, the thickness $\th(M):=\th_{\Z[H]}(M)$ is the
dimension of the Newton polyhedron of the Laurent polynomial
$\ord_{\Z[H]}(\tor_{\Z[H]}M)$.  Note that the definition does
not depend on a representative for this polynomial.
Later on we will make use of the following lemma.

\begin{lemma}
\label{lem:thicknessadds}
Let $H_1, \dots, H_r$ be free abelian groups, and let $M_i$
be finitely generated modules over $\Z[H_i]$.
Set $H:=\bigoplus_{i=1}^{r} H_i$, and view $\Z[H_i]$
as subrings of $\Z[H]$.  Then for $i=1,\dots,r$ we have
\[
\ord_{\Z[H]}(\tor_{\Z[H]}(M_i\otimes_{\Z[H_i]}\Z[H]))=
\ord_{\Z[H_i]}\tor_{\Z[H_i]}M_i,
\]
and furthermore
\[
\th_{\Z[H]}\Big(\bigoplus M_i\otimes_{\Z[H_i]}\Z[H]\Big)=
\sum \th_{\Z[H_i]}(M_i).
\]
\end{lemma}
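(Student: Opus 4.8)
The plan is to reduce everything to the single-factor case first. The key algebraic observation is that if $M_i$ is a $\Z[H_i]$-module and $H = H_i \oplus H_i'$ (where $H_i'$ denotes the complementary summand $\bigoplus_{j\ne i} H_j$), then $\Z[H] = \Z[H_i] \otimes_{\Z} \Z[H_i']$ is a free — hence faithfully flat — module over $\Z[H_i]$. Base change along a faithfully flat, and in fact simply a \emph{free}, ring extension commutes with taking torsion submodules and with forming elementary ideals, because it commutes with taking kernels, cokernels, and finite presentations. Concretely, tensoring the presentation \eqref{eq:pres} for $M_i$ with $\Z[H]$ gives a presentation of $M_i \otimes_{\Z[H_i]} \Z[H]$ by the \emph{same} matrix $\a$, now viewed over $\Z[H]$. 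Hence the $(s-k)\times(s-k)$ minors of $\a$ are unchanged, and so $E_k(M_i \otimes \Z[H]) = E_k(M_i)\cdot \Z[H]$.

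First I would verify that $\ord^k_{\Z[H]}(M_i \otimes_{\Z[H_i]} \Z[H]) = \ord^k_{\Z[H_i]}(M_i)$ for all $k$, which follows once one knows that a gcd of the minors in $\Z[H_i]$ remains a gcd after extension to $\Z[H]$; this is the statement that $\Z[H_i] \to \Z[H]$ preserves gcd's, which holds because $\Z[H_i]$ is a UFD and $\Z[H] = \Z[H_i][H_i']$ is a polynomial-type (Laurent) extension of it, again a UFD with the same units up to the torsion-free part. Next I would pin down the rank: $M_i \otimes_{\Z[H_i]} \Z[H]$ has the same rank $r_i$ over $\Z[H]$ as $M_i$ has over $\Z[H_i]$, since rank is computed by localizing at the respective generic points and base change is compatible with localization. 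Combining this with \eqref{equ:tu01} — which expresses $\ord_{\Z[H]}(\tor_{\Z[H]}(\cdot))$ as the order $\ord^{r_i}$ of the full module — gives the first displayed identity of the lemma: $\ord_{\Z[H]}(\tor_{\Z[H]}(M_i \otimes \Z[H])) = \ord^{r_i}_{\Z[H]}(M_i \otimes \Z[H]) = \ord^{r_i}_{\Z[H_i]}(M_i) = \ord_{\Z[H_i]}(\tor_{\Z[H_i]} M_i)$, up to units.

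For the thickness statement, the direct-sum decomposition behaves multiplicatively: the order of a torsion direct sum is the product of the orders, so $\ord_{\Z[H]}(\tor_{\Z[H]}(\bigoplus M_i \otimes \Z[H])) = \prod_i \ord_{\Z[H_i]}(\tor_{\Z[H_i]} M_i)$, where each factor is a Laurent polynomial supported on the subgroup $H_i$. Since the subgroups $H_i$ span independent subspaces of $H \otimes \Q$, the Newton polytope of a product of polynomials supported on the $H_i$ is the Minkowski sum of the individual Newton polytopes sitting in complementary directions, so its dimension is the sum of the dimensions. I would phrase this as: $\op{span}$ of the difference set of the support of a product equals the sum of the spans of the difference sets of the supports of the factors, precisely because those spans lie in the internally-direct-sum decomposition $H \otimes \Q = \bigoplus_i H_i \otimes \Q$. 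This gives $\th_{\Z[H]}(\bigoplus M_i \otimes \Z[H]) = \sum_i \th_{\Z[H_i]}(M_i)$.

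The main obstacle is the bookkeeping around units and gcd's: one must be careful that ``order'' is only well-defined up to a unit of $\Z[H]$, so all the equalities above are equalities of ideals (or of elements up to units), and one needs that the units of $\Z[H]$ restrict compatibly — i.e., that $\pm H \subset \Z[H]$ and no new units are introduced by the extension — so that the Newton polytope is genuinely well-defined and stable under base change. Once that is set up cleanly, together with the rank compatibility that lets one invoke \eqref{equ:tu01}, the two assertions are formal consequences of the multiplicativity of orders on direct sums and the additivity of Newton-polytope dimension under Minkowski sums in complementary coordinate directions.
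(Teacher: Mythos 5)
Your proposal is correct and follows essentially the same route as the paper: both arguments reduce to the fact that extension of scalars along the free ring map $\Z[H_i]\hookrightarrow\Z[H]$ preserves orders (same presentation matrix, gcd's preserved since both rings are UFDs with matching units), then multiply orders over the direct sum and finish by additivity of Newton-polytope dimension in complementary coordinate directions. The only cosmetic difference is that you deduce the first displayed identity via ranks and \eqref{equ:tu01}, whereas the paper instead observes directly that $\tor_{\Z[H]}(M_i\otimes_{\Z[H_i]}\Z[H])=(\tor_{\Z[H_i]}M_i)\otimes_{\Z[H_i]}\Z[H]$ and then transfers orders; the underlying base-change facts are the same.
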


\begin{proof}
It follows easily from the definitions that
\[
\tor_{\Z[H]}(M_i\otimes_{\Z[H_i]}\Z[H])=(\tor_{\Z[H_i]}M_i)\otimes_{\Z[H_i]}\Z[H].
\]
In particular,
\[
\ord_{\Z[H]}(\tor_{\Z[H]}(M_i\otimes_{\Z[H_i]}\Z[H]))=
\ord_{\Z[H_i]}\tor_{\Z[H_i]}M_i,
\]
and so
\[
\ord_{\Z[H]} \tor_{\Z[H]} \Big(\bigoplus M_i\otimes_{\Z[H_i]}\Z[H]\Big)
= \prod \ord_{\Z[H_i]}\tor_{\Z[H_i]}M_i.
\]
The desired conclusion follows at once.
\end{proof}

\subsection{Alexander polynomials}
\label{subsec:alex poly}

Let $X$ be a connected CW-complex with finitely many $1$-cells.
We denote by $\wti{X}$ the universal cover of $X$. Note that $\pi_1(X)$
canonically acts on $\wti{X}$ on the left; we use the natural involution
$g\mapsto g^{-1}$ on $\pi_1(X)$ to endow $\wti{X}$ with a right
$\pi_1(X)$-action.

Let $H:=H_1(X;\Z)/\tor$ be the maximal torsion-free
abelian quotient of $\pi_1(X)$.  We view $\Z[H]$ as a left
$\pi_1(X)$-module via the canonical projection
$\pi_1(X)\surj H$. Consider the tensor product
$C_*(\wti{X})\otimes_{\Z[\pi_1(X)]} \Z[H]$.
This defines  a chain complex of $\Z[H]$--modules, and we denote
its homology groups by $H_*(X;\Z[H])$. Most important for
our purposes is the {\em Alexander invariant}, $A_X= H_1(X;\Z[H])$.

For each integer $k\ge 0$, we define the {\em $k$-th
Alexander polynomial}\/ of $X$ as
\begin{equation}
\label{eq:delx}
\Delta_X^k:=\ord^k_{\Z[H]}(A_X).
\end{equation}
The Laurent polynomial $\Delta_{X}^k\in \Z[H]$ is
well-defined up to multiplication by a unit in $\Z[H]$,
and only depends on $\pi_1(X)$.  We write
$\Delta_X:=\Delta_X^0$, and call it the
{\em Alexander polynomial}\/ of $X$. If $\pi$ is a
finitely generated group, then we denote by
$\Delta_{\pi}^k$ the Alexander polynomials of its
Eilenberg--MacLane space.  Note that
$\Delta_X^k = \Delta_{\pi_1(X)}^k$.

We denote the thickness of the module $A_X$
by $\th(X)$. It follows from the definitions and
formula \eqref{equ:tu01} that
\begin{equation}
\label{eq:thx}
\th(X)=\dim( \Newt( \Delta_X^r)),
\end{equation}
where $r=\rank_{\Z[H]} (A_X)$.  In particular, if
the Alexander invariant is a torsion $\Z[H]$-module, then
$\th(X)=\dim( \Newt( \Delta_X))$.

Let $\C^{*}$ be the multiplicative group of non-zero
complex numbers. We shall call the connected algebraic group
$\widehat{H}=\Hom(H,\C^{*})$ the {\em character torus}\/
of $X$. The Laurent polynomial $\Delta_{X}^k$ can be
viewed as a regular function on
$\widehat{H}$. As such, it defines
a hypersurface,
\begin{equation}
\label{eq:vdelta}
V(\Delta^k_X) = \set{\rho \in \widehat{H} \mid \Delta^k_X(\rho)=0}.
\end{equation}

For instance, if $K$ is a knot in $S^3$, with exterior
$X=S^3\setminus \nu K$, then $\Delta_X$ is the classical
Alexander polynomial of the knot, and $V(\Delta^k_X)\subset \C^*$
is the set of roots of $\Delta_X$, of multiplicity at least $k$.

\subsection{Homology jump loci}
\label{subsec:cvs}

The {\em characteristic varieties}\/ of $X$ are
the jump loci for homology with coefficients in the rank
$1$ local systems defined by characters inside the
character torus of $X$.  For each $k\ge 1$, the set
\begin{equation}
\label{eq:cvs}
\V_k(X)=\{\rho \in \widehat{H}
\mid \dim H_1(X,\C_{\rho})\ge k\}
\end{equation}
is a Zariski closed subset of $\widehat{H}$.  These varieties
depend only on the group $\pi=\pi_1(X)$, so we will sometimes
write them as $\V_k(\pi)$.  For more details on all this, we
refer to \cite{Su11, Su12}.

As shown by Hironaka \cite{Hi97}, the characteristic varieties
coincide with the varieties defined by the Alexander ideals of $X$,
at least away from the trivial representation.  More precisely,
\begin{equation}
\label{eq:ve}
 \V_k(X) \setminus \set{1} =V(E_{k-1}(A_X)) \setminus \set{1} .
 \end{equation}

The next lemma details the relationship between the
hypersurfaces defined by the Alexander polynomials of
$X$ and the characteristic varieties of $X$.  (The case
$k=1$ was proved by similar methods in \cite[Corollary 3.2]{DPS08}.)

\begin{lemma}
\label{lem:delta cv}
For each $k\ge 1$, let $\cv_k(X)$ be the union of all
codimension-one irreducible components of $\V_k(X)$.
Then,
\begin{enumerate}
\item \label{dc1}
$\Delta^{k-1}_X=0$ if and only if
$\V_k(X)=\widehat{H}$, in which
case $\cv_k(X)=\emptyset$.
\item \label{dc2}
If $b_1(X)\ge 1$ and $\Delta^{k-1}_X\ne 0$, then
\begin{equation*}
\cv_k(X) =\begin{cases}
V(\Delta^{k-1}_X) & \text{if $b_1(X)\ge 2$}\\
V(\Delta^{k-1}_X)\coprod \{ 1\}  & \text{if $b_1(X)=1$}.
\end{cases}
\end{equation*}
\end{enumerate}
\end{lemma}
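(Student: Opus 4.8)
The plan is to work out, for each $k\ge 1$, the relationship between $V(\Delta^{k-1}_X)$, $\V_k(X)$, and the codimension-one part $\cv_k(X)$, using the three ingredients already in hand: the definition $\Delta^{k-1}_X=\ord^{k-1}_{\Z[H]}(A_X)=\gcd$ of all $(s-k+1)\times(s-k+1)$-minors of a presentation matrix $\a$, Hironaka's identity \eqref{eq:ve} that $\V_k(X)\sm\{1\}=V(E_{k-1}(A_X))\sm\{1\}$, and the basic facts about orders and elementary ideals from \S\ref{section:orders}. The key algebraic point is that over the Noetherian UFD $\Z[H]$, the vanishing locus of the elementary ideal $E_{k-1}(A_X)$ and the vanishing locus of its gcd $\Delta^{k-1}_X$ differ only in codimension $\ge 2$: indeed $V(\Delta^{k-1}_X)$ is exactly the union of the codimension-one components of $V(E_{k-1}(A_X))$ when $\Delta^{k-1}_X\ne 0$, since pulling out the gcd leaves an ideal of codimension $\ge 2$ (or the unit ideal).

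For part \eqref{dc1}, I would argue as follows. By \eqref{eq:ordk}, $\Delta^{k-1}_X=0$ means every $(s-k+1)$-minor of $\a$ vanishes identically, i.e.\ $E_{k-1}(A_X)=0$, hence $V(E_{k-1}(A_X))=\widehat H$; combined with \eqref{eq:ve} this gives $\V_k(X)\sm\{1\}=\widehat H\sm\{1\}$, and since $\V_k(X)$ is Zariski closed and $\widehat H$ is irreducible, $\V_k(X)=\widehat H$. Conversely, if $\V_k(X)=\widehat H$ then $V(E_{k-1}(A_X))=\widehat H$ by \eqref{eq:ve} (the removed point does not affect the closure), which forces $E_{k-1}(A_X)$ to lie in every maximal ideal, hence to be nilpotent, hence zero since $\Z[H]$ is reduced; therefore $\Delta^{k-1}_X=0$. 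In this case $\V_k(X)=\widehat H$ has no codimension-one components, so $\cv_k(X)=\emptyset$.

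For part \eqref{dc2}, assume $b_1(X)\ge 1$ and $\Delta^{k-1}_X\ne 0$. Write $E_{k-1}(A_X)=\Delta^{k-1}_X\cdot I$ where $I=(f_1,\dots,f_m)$ with $\gcd(f_j)=1$. Then $V(E_{k-1}(A_X))=V(\Delta^{k-1}_X)\cup V(I)$, and since $\gcd(f_j)=1$ in the UFD $\Z[H]$, the ideal $I$ has no codimension-one component in its zero locus — any codimension-one irreducible component of $V(I)$ would be cut out by an irreducible $p\in\Z[H]$ dividing all $f_j$, contradicting $\gcd(f_j)=1$. Hence the codimension-one part of $V(E_{k-1}(A_X))$ equals $V(\Delta^{k-1}_X)$ (note $V(\Delta^{k-1}_X)$ is pure of codimension one, being a principal hypersurface). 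By \eqref{eq:ve}, $\V_k(X)$ and $V(E_{k-1}(A_X))$ agree away from $\{1\}$, so their codimension-one components agree except possibly for the component $\{1\}$ itself, which is codimension one exactly when $\dim\widehat H=b_1(X)=1$. Thus $\cv_k(X)=V(\Delta^{k-1}_X)$ when $b_1(X)\ge 2$, while when $b_1(X)=1$ one must additionally check whether $1\in\V_k(X)$; since $b_1(X)\ge 1$ gives $H_1(X;\C)\ne 0$ and in the rank-one case this forces $\dim H_1(X,\C_1)=b_1(X)=1\ge 1$ so $1\in\V_1(X)$, and more generally the isolated point $\{1\}$ is a codimension-one component precisely as stated, yielding $\cv_k(X)=V(\Delta^{k-1}_X)\amalg\{1\}$.

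The main obstacle I anticipate is the careful bookkeeping around the trivial character in the $b_1(X)=1$ case: one must verify that $\{1\}$ is genuinely a component of $\cv_k(X)$ (not absorbed into $V(\Delta^{k-1}_X)$, and actually present in $\V_k(X)$), which requires knowing the multiplicity of $1$ in $\V_k$, i.e.\ comparing $\dim H_1(X,\C_1)=b_1(X)$ against $k$; the cleanest route is to note that $\{1\}$ and $V(\Delta^{k-1}_X)$ are disjoint or not according to whether $\Delta^{k-1}_X(1)=0$, and to handle the value at $1$ separately using that the statement only claims $\cv_k(X)$ as a set (the $\coprod$ should be read as: $\{1\}$ is among the codimension-one components). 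A secondary technical point is confirming the UFD property of $\Z[H]$ for $H$ free abelian (true, since $\Z[H]\cong\Z[t_1^{\pm1},\dots,t_n^{\pm1}]$ is a localization of a polynomial ring over $\Z$), which legitimizes the gcd manipulations.
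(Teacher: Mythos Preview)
Your approach is essentially the paper's: both combine Hironaka's formula \eqref{eq:ve} with the fact that for an ideal $\mathfrak a\subset\Z[H]$, the hypersurface $V(\gcd\mathfrak a)$ is exactly the union of the codimension-one irreducible components of $V(\mathfrak a)$. The paper simply quotes this fact from \cite[Lemma~3.1]{DPS08} and then says ``the desired conclusions follow at once''; you instead prove it directly via the factorization $E_{k-1}(A_X)=\Delta^{k-1}_X\cdot I$ with $\gcd(I)=1$ in the UFD $\Z[H]\cong\Z[t_1^{\pm1},\dots,t_n^{\pm1}]$, which is exactly how that cited lemma is established.

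Your flagged concern about the $b_1(X)=1$ case is legitimate, and the paper's proof does not address it either. Since $\dim H_1(X,\C_1)=b_1(X)=1$, one has $1\in\V_k(X)$ only when $k=1$; for instance, for the trefoil exterior one computes $\Delta^1_X=1$ and $\V_2(X)=\emptyset$, so the displayed formula $\cv_2(X)=V(\Delta^{1}_X)\amalg\{1\}=\{1\}$ is not literally correct. This causes no trouble downstream, since the only subsequent application of the lemma (Theorem~\ref{thm:parallel}) assumes $b_1(X)\geq 2$, and your argument for that case is complete.
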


\begin{proof}
Given an ideal $\mathfrak{a}\subset \Z[H]$,
let $\check{V}(\mathfrak{a})$ be the union of all
codimension-one irreducible components of the
subvariety $V(\mathfrak{a}) \subset \widehat{H}$
defined by $\mathfrak{a}$.
As noted in \cite[Lemma 3.1]{DPS08},
we have that $V(\gcd(\mathfrak{a})) = \check{V}(\mathfrak{a})$.

Applying this observation to the ideal $\mathfrak{a}=E_{k-1}(A_X)$,
and using formula \eqref{eq:ve}, we see that $V(\Delta^{k-1}_X)=\cv_k(X)$,
at least away from the identity.  The desired conclusions follow at once.
\end{proof}

The next theorem generalizes Proposition 3.7 from \cite{DPS08},
which treats the case $k=1$ along the same lines.  For the sake
of completeness, we provide full details.

\begin{theorem}
\label{thm:parallel}
Suppose $b_1(X)\ge 2$.  Then $\Delta^{k-1}_X\doteq \const $ if
and only if $\cv_k(X)=\emptyset$; otherwise, the following are equivalent:
\begin{enumerate}
\item \label{p1}
The Newton polytope of $\Delta^{k-1}_X$ is a line segment.
\item \label{p2}
All irreducible components of $\cv_k(X)$ are
parallel, codimension-one subtori of $\widehat{H}$.
\end{enumerate}
\end{theorem}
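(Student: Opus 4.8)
The plan is to reduce Theorem~\ref{thm:parallel} to an elementary statement about a single Laurent polynomial on the character torus, with Lemma~\ref{lem:delta cv} serving as the only bridge to the characteristic varieties. Throughout, set $n=b_1(X)\ge 2$, identify $\widehat{H}=\Hom(H,\C^{*})$ with $(\C^{*})^{n}$, and write $\Delta:=\Delta^{k-1}_X\in\Z[H]$, viewed as a regular function on $\widehat{H}$. Because $b_1(X)\ge 2$, Lemma~\ref{lem:delta cv}\eqref{dc2} gives $\cv_k(X)=V(\Delta)$ whenever $\Delta\neq 0$, and Lemma~\ref{lem:delta cv}\eqref{dc1} gives $\cv_k(X)=\emptyset$ when $\Delta=0$; the hypothesis $b_1(X)\ge 2$ enters here precisely to discard the stray point $\{1\}$ appearing in the $b_1(X)=1$ case of that lemma. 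After these reductions the theorem becomes a question about the hypersurface $V(\Delta)\subset\widehat{H}$, and the crux is the purely algebraic claim: \emph{for a non-unit $\Delta\in\C[H]$, the Newton polytope $\Newt(\Delta)$ is a line segment if and only if every irreducible component of $V(\Delta)$ is a coset of one fixed connected codimension-one subtorus $T\subset\widehat{H}$.}

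First I would settle the dichotomy. If $\Delta\doteq\const$ then either $\Delta=0$, whence $\cv_k(X)=\emptyset$ by Lemma~\ref{lem:delta cv}\eqref{dc1}, or $\Delta$ is a nonzero constant, whence $V(\Delta)=\emptyset$ and again $\cv_k(X)=\emptyset$. Conversely, if $\Delta\not\doteq\const$ then $\Delta$ is a non-unit of $\C[H]$, hence has an irreducible factor $p$; since $\C[H]/(p)$ is a nonzero finitely generated $\C$-algebra, the Nullstellensatz produces a point of $V(p)\subseteq V(\Delta)=\cv_k(X)$, so $\cv_k(X)\ne\emptyset$. This proves the first assertion, and henceforth $\Delta\not\doteq\const$ and $\cv_k(X)=V(\Delta)$, a nonempty hypersurface whose irreducible components are then automatically of codimension one.

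For \eqref{p1}$\Rightarrow$\eqref{p2}: if $\Newt(\Delta)$ is a line segment, choose a primitive vector $h_0\in H$ spanning its direction. Multiplying $\Delta$ by a monomial unit leaves $V(\Delta)$ unchanged and only translates $\Newt(\Delta)$, so I may assume every monomial of $\Delta$ lies in $\{0,h_0,\dots,d h_0\}$ with $d\ge1$ and the coefficients of $1$ and of $d h_0$ nonzero. Factoring the associated one-variable polynomial over $\C$ gives $\Delta\doteq\prod_{j=1}^{d}(h_0-\zeta_j)$ with all $\zeta_j\in\C^{*}$, so that $V(\Delta)$ is the union of the fibers $\{\rho\mid\rho(h_0)=\zeta_j\}$ of the surjective homomorphism $\mathrm{ev}_{h_0}\colon\widehat{H}\to\C^{*}$, $\rho\mapsto\rho(h_0)$. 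As $h_0$ is primitive, $T:=\ker(\mathrm{ev}_{h_0})$ is a connected codimension-one subtorus, each such fiber is a coset of $T$, and these cosets are exactly the irreducible components of $V(\Delta)$; this is \eqref{p2}. For \eqref{p2}$\Rightarrow$\eqref{p1}: suppose the components of $V(\Delta)$ are cosets $\rho_1T,\dots,\rho_mT$ of one codimension-one subtorus $T$. Every codimension-one subtorus of $\widehat{H}$ has the form $\ker(\mathrm{ev}_{h_0})$ for a primitive $h_0\in H$, unique up to sign, so $\rho_jT=\{\rho\mid\rho(h_0)=\zeta_j\}=V(h_0-\zeta_j)$ with the $\zeta_j\in\C^{*}$ distinct; splitting $H\cong\Z h_0\oplus H'$ shows each $h_0-\zeta_j$ is irreducible in the unique factorization domain $\C[H]$. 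Since the irreducible components of the hypersurface $V(\Delta)$ are precisely the zero sets of the distinct irreducible factors of $\Delta$, we get $\Delta\doteq\prod_{j=1}^{m}(h_0-\zeta_j)^{e_j}$ with every $e_j\ge1$; this is a monomial unit times a polynomial in the single monomial $h_0$, so $\Newt(\Delta)$ is a line segment. Combined with the reductions above, this gives the theorem.

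I expect the argument to be routine once the pieces are lined up; the only delicate point is the passage from the set-theoretic locus $V(\Delta)$ to the factorization of $\Delta$, which rests on $\C[H]$ being a UFD in which $V(p)$ is irreducible for $p$ irreducible, and on the fact that a binomial $h_0-\zeta$ in a \emph{primitive} monomial is itself irreducible and cuts out a \emph{connected} codimension-one coset. Primitivity is exactly what makes the dictionary ``codimension-one subtorus $\leftrightarrow$ primitive character direction'' work, and it is this dictionary, together with the ``parallel'' hypothesis forcing all components to share the same $h_0$, that is equivalent to the one-dimensionality of $\Newt(\Delta)$.
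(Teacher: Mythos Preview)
Your proof is correct and follows essentially the same approach as the paper's own argument: both rest on Lemma~\ref{lem:delta cv} to identify $\cv_k(X)$ with $V(\Delta)$, and then match the irreducible factorization of $\Delta$ against the irreducible components of the hypersurface. The only difference is presentational: the paper picks coordinates $(t_1,\dots,t_n)$ on $\widehat{H}$ so that the common direction is $t_1$, whereas you work coordinate-free with a primitive $h_0\in H$ and the evaluation map $\mathrm{ev}_{h_0}$; you also supply a few justifications (Nullstellensatz for non-emptiness, irreducibility of $h_0-\zeta_j$ via the splitting $H\cong\Z h_0\oplus H'$, and the UFD dictionary between prime factors and components) that the paper leaves implicit.
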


\begin{proof}
The first equivalence follows at once from Lemma \ref{lem:delta cv}.
So let us assume $\Delta:=\Delta^{k-1}_X$ is non-constant, and
set $n=b_1(X)$.

First suppose \eqref{p1} holds.  Then, in a suitable coordinate system
$(t_1,\dots ,t_n)$ on $\widehat{H}=(\C^*)^n$,  the polynomial
$\Delta$ can be written as $(t_1-z_1)^{\alpha_1}\dots (t_1-z_n)^{\alpha_n}$,
for some pairwise distinct, non-zero complex numbers $z_i$
and positive exponents $\alpha_i$. From Lemma \ref{lem:delta cv},
we conclude that $\cv_k (X)$ is the (disjoint) union of the parallel
subtori $\{ t_1=z_1 \}, \dots  , \{ t_1=z_n \}$.

Next, suppose \eqref{p2} holds.  Then again in a suitable
coordinate system, we have that $\cv_k (X)= \bigcup_i\{ t_1=z_i \}$.
Now let $\Delta = f_1^{\beta_1}\cdots f_q^{\beta_q}$ be the
decomposition of $\Delta$ into irreducible factors.  Then
$V(\Delta)$ decomposes into irreducible components
as $\bigcup_j \{ f_j= 0 \}$.  Since the two decompositions into
irreducible components of $\cv_k (X)=V(\Delta)$ must agree,
we must have that $\Delta \doteq \prod_i (t_1-z_i)^{\alpha_i}$.
Hence, $\Newt(\Delta)$ is a line segment, and we are done.
\end{proof}

\section{K\"ahler groups}
\label{sect:kahler}

A {\em K\"{a}hler manifold}\/ is a compact, connected,
complex manifold without boundary, admitting a Hermitian
metric $h$ for which the imaginary part $\omega=\im(h)$
is a closed $2$-form.    The class of K\"{a}hler manifolds,
is closed under finite direct products. The main source
of examples are smooth, complex projective varieties, such
as Riemann surfaces, complex Grassmannians,
and abelian varieties.

Now suppose $\pi$ is a K\"ahler group, i.e., there is a
K\"ahler manifold $M$ with $\pi=\pi_1(M)$.
This condition puts severe restrictions on the group $\pi$,
besides the obvious fact that $\pi$ must be finitely presented
(we refer to \cite{ABCKT96} for a comprehensive survey).

For instance, the first Betti number $b_1(\pi)$ must be even,
and all higher-order Massey products of classes in $H^1(\pi, \Q)$
vanish.  Furthermore, the group $\pi$ cannot split as a non-trivial
free product, by work of Gromov \cite{Gr89} and Arapura,
Bressler, and Ramachandran \cite{ABR92}.  Finally,
as shown in \cite{DPS09}, the only right-angled Artin groups
which are also K\"ahler groups are the free abelian groups
of even rank.

The pull-back of a K\"ahler metric to a finite cover is again
a K\"ahler metric.  It follows that the  finite cover of a K\"ahler
manifold is also a K\"ahler manifold.  We thus have the following
lemma (see also \cite[Example~1.10]{ABCKT96}.)

\begin{lemma}
\label{lem:kahlersubgroup}
Any  finite-index subgroup of a K\"ahler group is again
a K\"ahler group.
\end{lemma}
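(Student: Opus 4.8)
The plan is to pass from a K\"ahler manifold realizing $\pi$ to the finite cover corresponding to the given finite-index subgroup, and to check that this cover is again K\"ahler.

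First I would choose a compact, connected K\"ahler manifold $M$ with $\pi_1(M)\cong\pi$, together with a K\"ahler metric $h$ on $M$ whose imaginary part $\omega=\im(h)$ is a closed $2$-form. Given a finite-index subgroup $\pi'\le\pi$, let $p\colon M'\to M$ be the covering space corresponding to $\pi'$ under the Galois correspondence for covers of $M$. Since $[\pi:\pi']<\infty$ and $M$ is compact and connected, the total space $M'$ is again compact and connected, and $\pi_1(M')\cong\pi'$ by covering space theory.

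Next I would equip $M'$ with a complex structure. Because $p$ is a local diffeomorphism, the holomorphic atlas of $M$ pulls back along $p$ to a holomorphic atlas on $M'$, making $p$ a local biholomorphism. Then the pulled-back form $h'=p^*h$ is a Hermitian metric on $M'$: positive-definiteness is a pointwise condition, preserved under the local biholomorphism $p$. Its imaginary part is $\omega'=\im(p^*h)=p^*\omega$, and since exterior differentiation commutes with pullback, $d\omega'=p^*(d\omega)=0$, so $\omega'$ is closed. Hence $(M',h')$ is a compact K\"ahler manifold with $\pi_1(M')\cong\pi'$, which shows that $\pi'$ is a K\"ahler group.

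There is essentially no serious obstacle here; the only points requiring a little care are that compactness of $M'$ genuinely uses the finiteness of the index $[\pi:\pi']$, and that the conditions of being Hermitian and positive definite are local and are therefore transported without loss by the local biholomorphism $p$.
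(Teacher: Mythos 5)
Your proposal is correct and is exactly the paper's argument: pass to the finite cover corresponding to the subgroup, note it is compact since the index is finite, and observe that the pulled-back Kähler metric is again Kähler. The paper states this in two sentences before the lemma, citing \cite[Example~1.10]{ABCKT96}, and your write-up simply supplies the routine local details.
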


An analogous result holds for fundamental groups
of smooth, complex projective varieties.

The basic structure of the characteristic varieties of
K\"{a}hler manifolds was described in work of Beauville,
Green--Lazarsfeld, Simpson, Campana, and Arapura \cite{Ar97}.
We state a simplified version of this result, in the
form we need it.

\begin{theorem}
\label{thm:cv kahler}
Let $M$ be a K\"{a}hler manifold.  Then, for each $k\ge 1$,
all the positive-dimensional, irreducible components
of $\V_{k}(M)$ are even-di\-men\-sional
subtori of the character torus of $M$,
possibly translated by torsion characters.
\end{theorem}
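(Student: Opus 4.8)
The plan is to deduce the statement from the structure theorem for the characteristic varieties of compact K\"ahler manifolds, due to Beauville, Green--Lazarsfeld, Simpson, Campana and Arapura \cite{Ar97}, in the sharpened form established in \cite{DPS09} and \cite{ACM10}. In the generality needed here, that theorem says: every irreducible component $W$ of $\V_k(M)$ has the form $W=\rho\cdot T$, where $T\subseteq\widehat{H}$ is a connected algebraic subgroup (a subtorus) and $\rho$ is a \emph{torsion} character; moreover, if $\dim W>0$, then $T=f^{*}(\V_j(C))$ for some $j\ge 1$ and some surjective holomorphic map with connected fibers $f\colon M\to C$ onto a compact orbifold Riemann surface $C$. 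Granting this, the ``translated by a torsion character'' part of the conclusion is immediate, and it only remains to prove that $\dim W=\dim T$ is even.

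First I would reduce matters to the following claim about the orbicurve $C$ appearing above: $\V_j(C)$ is either finite or equal to the full character torus of $C$, namely $\widehat{H_1^{\mathrm{orb}}(C;\Z)/\tor}$ (a Riemann--Roch computation on $C$). Since $\dim W>0$ forces $\dim T>0$, the second alternative must hold, so $T=f^{*}\big(\widehat{H_1^{\mathrm{orb}}(C;\Z)/\tor}\big)$. Because $f$ has connected fibers, the induced homomorphism $H_1(M;\Z)\surj H_1^{\mathrm{orb}}(C;\Z)$ is onto; passing to torsion-free quotients and dualizing, the map $\widehat{H_1^{\mathrm{orb}}(C;\Z)/\tor}\to\widehat{H}$ is injective with image $T$, whence $\dim T=\rank\big(H_1^{\mathrm{orb}}(C;\Z)/\tor\big)$. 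Finally, if $g$ denotes the genus of the Riemann surface underlying $C$, the cone points of the orbifold structure contribute only torsion to the orbifold first homology, so this rank equals $2g$. Hence $\dim W=2g$ is even; in passing this also shows $g\ge 1$, since a positive-dimensional component cannot come from a genus-$0$ orbicurve, whose orbifold $H_1$ has rank $0$.

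The main obstacle is entirely contained in the first paragraph: extracting from the results of complex geometry the precise shape of the components of $\V_k(M)$. That the translation characters can be taken torsion rests on Simpson's theorem together with the analysis in \cite{DPS09} and \cite{ACM10}, and the identification of each positive-dimensional component with the pullback of a characteristic variety along a fibration onto an orbicurve is the substance of Arapura's theorem \cite{Ar97}. Once these inputs are in place, the remaining steps---observing that $\V_j(C)$ is finite or everything, using surjectivity of $f_{*}$ on first homology, and computing $\rank(H_1^{\mathrm{orb}}(C;\Z)/\tor)=2g$---are elementary, and they are exactly what forces the even-dimensionality in the conclusion.
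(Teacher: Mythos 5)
The paper states this theorem without proof, treating it purely as a citation of the Beauville--Green--Lazarsfeld--Simpson--Campana--Arapura structure theory (with the torsion refinement), so there is no ``paper's proof'' to compare against literally; you have instead supplied the derivation that the authors chose to leave implicit. Your sketch is essentially the standard one, and I believe it is correct. The two genuine inputs are correctly isolated: (i) each positive-dimensional component of $\V_k(M)$ is a torsion translate of the pullback $f^*\bigl(\widehat{H_1^{\mathrm{orb}}(C;\Z)/\tor}\bigr)$ along a pencil $f\colon M\to C$ onto a compact hyperbolic orbicurve, and (ii) the rank of $H_1^{\mathrm{orb}}(C;\Z)$ equals $2g$ because the cone points contribute only torsion, so the resulting subtorus is even-dimensional. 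The step from surjectivity of $f_*$ on $H_1$ to injectivity of the dual map (hence $\dim T = 2g$) is sound.

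One small caution: you attribute the torsion-translate refinement to both \cite{DPS09} and \cite{ACM10}, but \cite{ACM10} concerns the quasi-projective setting; for a \emph{compact} K\"{a}hler manifold the torsion statement is already due to Simpson, and the relevant sharpening in the compact case is the one in \cite{DPS09}. Also, your intermediate formulation $T=f^*(\V_j(C))$ is not quite the standard phrasing of Arapura's theorem (which gives $T$ directly as the pullback of the full character torus of the orbicurve), but you immediately remedy this by observing that positive-dimensionality forces $\V_j(C)$ to be the whole torus, so the argument closes correctly. In short: your proposal fills in a proof the paper omits, and it is the intended one.
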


The Alexander polynomial of a K\"ahler group is
highly restricted.   The next result sharpens
Theorem~4.3(3) from \cite{DPS08}, where a
similar result is proved in the case when $\pi$ is
the fundamental group of a smooth projective
variety, and $k=0$.

\begin{theorem}
\label{thm:deltakahler}
If $\pi$ be a  K\"ahler group.  Then, for any $k\ge 0$, the
polynomial $\Delta_\pi^k$ is a constant.  In particular, $\th(\pi)=0$.
\end{theorem}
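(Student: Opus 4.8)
The plan is to reduce the statement to an application of Theorem~\ref{thm:parallel} combined with the structural result on K\"ahler characteristic varieties in Theorem~\ref{thm:cv kahler}. First I would dispose of the low-$b_1$ cases separately. If $b_1(\pi)=0$, then $H$ is trivial, $\Z[H]=\Z$, and every Alexander polynomial is just an integer, hence constant; moreover $\th(\pi)=0$ automatically. If $b_1(\pi)=1$, then $H\cong\Z$, the character torus is $\C^*$, which is one-dimensional, so the only positive-dimensional subtorus is $\C^*$ itself; any proper subvariety of $\C^*$ is a finite set of points, in particular zero-dimensional, so $\Newt(\Delta_\pi^k)$ is at most a point and hence $\Delta_\pi^k$ is constant. (Recall $b_1(\pi)$ must be even for a K\"ahler group, so strictly speaking only $b_1=0$ occurs among the "small" cases, but it costs nothing to handle $b_1=1$ too.) The main case is therefore $b_1(\pi)=n\ge 2$.

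For $n\ge 2$, fix $k\ge 0$ and let $M$ be a K\"ahler manifold with $\pi_1(M)=\pi$; note $\Delta_\pi^k=\Delta_M^k$ and $\cv_{k+1}(M)$ depends only on $\pi$. By Theorem~\ref{thm:parallel} (applied with "$k$" there equal to $k+1$), either $\Delta_\pi^k\doteq\const$ — in which case we are done for this $k$ — or $\cv_{k+1}(M)\ne\emptyset$ and $\Newt(\Delta_\pi^k)$ being a line segment is equivalent to all irreducible components of $\cv_{k+1}(M)$ being parallel codimension-one subtori. So it suffices to show two things: (a) every codimension-one irreducible component of $\V_{k+1}(M)$ is an honest (untranslated) subtorus, and (b) any two such components are parallel. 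For (a): by Theorem~\ref{thm:cv kahler}, a positive-dimensional irreducible component $W$ of $\V_{k+1}(M)$ is a subtorus of $\widehat{H}$ translated by a torsion character, and its dimension is even. If $W$ has codimension one then $\dim W = n-1$; but $W$ is even-dimensional, and — here is the subtle point — a torsion translate of a codimension-one subtorus need not pass through the identity, so I must rule out the genuinely translated possibility. The way around this is that Theorem~\ref{thm:parallel} only concerns $\Newt(\Delta_\pi^k)$ being a \emph{line segment}, and a line segment need not contain the origin; what actually matters is that all the codimension-one components are parallel \emph{translates} of a single subtorus, which is exactly what Theorem~\ref{thm:cv kahler} delivers once we know they have the same underlying direction.

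So the crux is step (b): showing that all codimension-one components of $\V_{k+1}(M)$ are mutually parallel. Here is where I expect the real work. A codimension-one component $W$ is (a torsion translate of) a subtorus $W_0\le\widehat{H}$ of dimension $n-1$; equivalently $W_0$ is the kernel of a primitive character $\chi_W\colon H\to\C^*$, i.e. a primitive element of $H^1=H_1(M;\Z)/\tor$ viewed via $\Hom$. The direction of $W$ is recorded by the line $\Q\cdot\chi_W\subset H^1\otimes\Q$. To see these lines all coincide I would invoke the resonance/tangent-cone picture: by the tangent cone theorem (Libgober, Dimca--Papadopol--Suciu), the tangent cone at $1$ to $\V_{k+1}(M)$ is a union of linear subspaces of $H^1(M;\C)$, and for a K\"ahler manifold the resonance variety $\mathcal R_{k+1}(M)$ — whose components these linear subspaces are — has components that pairwise intersect only at $0$ when they are positive-dimensional, because of the isotropicity coming from the cup-product structure (Dimca--Papadopol--Suciu, following Arapura). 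Actually, a cleaner route, and the one I would prefer to write up, is: a codimension-one translated subtorus component $W$ of $\V_{k+1}(M)$ arises, by Arapura's theorem, from a pencil, i.e. there is a holomorphic surjection $f\colon M\to C$ onto a curve $C$ with $W$ contained in the pullback $f^*(\widehat{H_1(C)})$; since $\dim W=n-1$ this forces $b_1(C)\ge n-1$, and in fact $f^*$ identifies $\widehat{H_1(C)}$ with a codimension-one (or full) subtorus, so the direction of $W$ is pinned down by $f$. If there were two non-parallel codimension-one components, we would get two such maps $f_1,f_2$ to curves with $b_1\ge n-1$ and independent induced maps on $H^1$, whence $(f_1,f_2)\colon M\to C_1\times C_2$ would have image a surface, contradicting that $b_1\ge n-1\ge 1$ on each factor forces — by a dimension/Euler-characteristic count on $M$ together with $n=b_1(M)$ — an inconsistency; more simply, the pullbacks of the two independent directions would span a rank-$2$ sublattice in which all of $\V_{k+1}$ near $1$ must sit in codimension one, which is impossible once the ambient rank is $\ge 2$ and the two subtori are distinct and not parallel. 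I would present this last point carefully since it is the one genuinely non-formal ingredient; everything else is bookkeeping with $\th(\pi)=\max_k\dim\Newt(\Delta_\pi^k)$ and the equivalences already established. Once all codimension-one components of every $\V_{k+1}(M)$ are parallel, Theorem~\ref{thm:parallel} gives $\Newt(\Delta_\pi^k)$ a line segment for every $k$; but a line segment has dimension $1$, so to conclude the \emph{stronger} assertion that $\Delta_\pi^k$ is actually \emph{constant} I still need to rule out dimension exactly $1$. This I would get from the evenness in Theorem~\ref{thm:cv kahler}: a line-segment Newton polytope would force, via the computation in the proof of Theorem~\ref{thm:parallel}, a \emph{codimension-one} (hence odd-codimension when $n$ is... no) — rather, it forces $\cv_{k+1}(M)$ to be a nonempty union of codimension-one subtori, each of dimension $n-1$; for $M$ K\"ahler the positive-dimensional components must be even-dimensional, so $n-1$ must be even, i.e. $n$ odd — but $n=b_1(\pi)$ is even for a K\"ahler group, contradiction. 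Hence $\cv_{k+1}(M)=\emptyset$ and $\Delta_\pi^k\doteq\const$, giving $\th(\pi)=0$. The main obstacle, to reiterate, is step (b)/the parallelism and the final parity argument; the rest is assembling results quoted above.
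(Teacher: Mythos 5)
Your proposal does eventually reach the paper's actual argument, but only in the final paragraph, after a long and unnecessary detour that is itself left unfinished. Let me explain.

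The paper's proof is very short: $b_1(\pi)$ is even (Hodge theory), say $b_1(\pi)=n$; by Theorem~\ref{thm:cv kahler} every positive-dimensional component of $\V_{k+1}(\pi)$ is even-dimensional; a codimension-one component would have dimension $n-1$, which is odd, so no such components exist, i.e.\ $\cv_{k+1}(\pi)=\emptyset$; now the \emph{first} equivalence in Theorem~\ref{thm:parallel} (the part before "otherwise") immediately gives $\Delta_\pi^k\doteq\const$. That is the entire argument. Note that this does not use the line-segment/parallelism dichotomy of Theorem~\ref{thm:parallel} at all --- only the preliminary observation that $\Delta^{k}_\pi\doteq\const$ if and only if $\cv_{k+1}=\emptyset$.

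You spend most of your proposal on steps (a) and (b), trying to show that all codimension-one components of $\V_{k+1}(M)$ are parallel (translated) subtori, invoking tangent cones, resonance varieties, Arapura's pencils, and curve maps. You flag (b) as the "genuinely non-formal ingredient" requiring real work, and you leave it as an incomplete sketch. But none of this is needed: the parity argument you give in your last paragraph shows outright that there are \emph{no} codimension-one components, which is a much stronger statement than parallelism and renders (a) and (b) moot. Since you did arrive at the correct parity observation (``$n-1$ must be even, i.e.\ $n$ odd --- but $n=b_1(\pi)$ is even \ldots contradiction''), your proposal is not wrong so much as misorganized: you should promote that final paragraph to be the whole argument and delete the pencil/resonance machinery, which is both incomplete and superfluous. (The $b_1=1$ case you treat separately is also vacuous, as you note, since a K\"ahler group has even $b_1$; and the $b_1=0$ case is correctly dispatched.)
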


\begin{proof}
From Hodge theory, we know that $b_1(\pi)$ is even.
If $b_1(\pi)=0$, there is nothing to prove; so we may as
well assume $b_1(\pi)\ge 2$.

Now, from Theorem \ref{thm:cv kahler} we know that
all positive-dimensional irreducible components of $\V_{k+1}(\pi)$
are even-dimensional.  Thus, there are no codimension-one
components in $\V_{k+1}(\pi)$; in other words, $\cv_{k+1}(\pi)=\emptyset$.
Finally, Theorem \ref{thm:parallel} implies that
$\Delta_\pi^k$ is constant.
\end{proof}

\section{Quasi-projective groups}
\label{sect:qp}

A manifold $X$ is said to be a {\em (smooth) quasi-projective
variety}\/ if there is a connected, smooth, complex projective
variety $\overline{X}$ and a divisor $D$ such that
$X=\overline{X}\setminus D$.   (If $\overline{X}$ is
only known to admit a K\"{a}hler metric, then $X$
is said to be a {\em quasi-K\"{a}hler manifold}.)
Using resolution of singularities, one may choose
the compactification $\overline{X}$ so that
$D=\overline{X}\setminus X$ is a normal-crossings
divisor.  An important source of examples is provided
by complements of hypersurfaces in $\CP^n$.

Now suppose $\pi$ is a \qp group, i.e., there is a smooth,
\qp variety $X$ such that $\pi=\pi_1(X)$.  Again, the group
$\pi$ must be finitely presented, but much weaker restrictions
are now imposed on $\pi$ than in the case of K\"ahler groups.
For instance, $b_1(\pi)$ can be arbitrary, non-trivial
Massey products can occur, and $\pi$ can split as a
non-trivial free product.

Examples of \qp groups include all finitely generated
free groups $F_n$, which may realized as
$\pi_1(\mathbb{CP}^1\setminus \{\text{$n+1$ points}\})$,
and all free abelian groups $\Z^n$, which may be realized
as $\pi_1((\C^*)^n)$.  In fact, a right-angled Artin group $\pi$
is \qp if and only $\pi$ is a direct product of free groups
(possibly infinite cyclic), see \cite{DPS09}.

The analog of Lemma \ref{lem:kahlersubgroup} holds for
\qp groups.  Though this is presumably folklore, we
could not find an explicit reference in the literature,
so we include a proof, kindly supplied to us
by Donu Arapura.

\begin{lemma}
\label{lem:qpsubgroup}
Let $\pi$ be a group, and let $\wti{\pi}$ be a finite-index subgroup.
If $\pi$ is \qsp, then $\wti{\pi}$ is \qp as well.
\end{lemma}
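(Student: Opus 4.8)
The plan is to exhibit, for a given finite-index subgroup $\wti{\pi}\le\pi=\pi_1(X)$ with $X$ smooth quasi-projective, a smooth quasi-projective variety whose fundamental group is $\wti{\pi}$. The natural candidate is a finite covering space $\wti{X}\to X$ corresponding to the subgroup $\wti{\pi}$, so the problem reduces to showing that a finite étale cover of a smooth quasi-projective variety is again smooth and quasi-projective. First I would take the normal-crossings compactification $\overline{X}$ with $D=\overline{X}\setminus X$ a normal-crossings divisor, as in the discussion preceding the lemma. The covering $\wti{X}\to X$ is a finite étale map of smooth varieties, hence $\wti{X}$ is itself a smooth variety (in particular connected, since $\wti{\pi}$ is a subgroup of $\pi=\pi_1(X)$ picked out precisely so that the cover is connected), and by construction $\pi_1(\wti{X})\cong\wti{\pi}$.

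The substantive step is compactifying $\wti{X}$. Here I would invoke the normalization: let $\overline{X}'$ be the normalization of $\overline{X}$ in the function field $\C(\wti{X})$. This is a normal projective variety equipped with a finite morphism $p\colon\overline{X}'\to\overline{X}$ that restricts to the given étale cover over $X$, so $\wti{X}$ is an open subvariety of $\overline{X}'$ and $\overline{X}'\setminus\wti{X}=p^{-1}(D)$. The variety $\overline{X}'$ may be singular and $p^{-1}(D)$ need not have normal crossings, but this is repaired by applying resolution of singularities: there is a proper birational morphism $Y\to\overline{X}'$ from a smooth projective variety $Y$ which is an isomorphism over the smooth locus of $\overline{X}'$ — in particular over $\wti{X}$, since $\wti{X}$ is smooth — and which can be arranged so that the total transform of $p^{-1}(D)$ is a normal-crossings divisor. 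Then $Y$ is the desired smooth projective compactification, $Y\setminus\wti{X}$ is a normal-crossings divisor, and $\pi_1(Y\setminus(Y\setminus\wti{X}))=\pi_1(\wti{X})\cong\wti{\pi}$, which proves $\wti{\pi}$ is quasi-projective.

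I expect the main obstacle to be purely expository: making sure the normalization construction genuinely yields the covering $\wti{X}\to X$ over the open locus and that the boundary behaves as claimed — i.e.\ that $p$ is étale exactly over $X$ and that $p^{-1}(D)$, while possibly nasty, is still a divisor whose resolution leaves $\wti{X}$ untouched. One should be slightly careful that the cover $\wti{X}\to X$ corresponding to an arbitrary finite-index subgroup is topological, and to realize it algebraically one uses that finite topological covers of a smooth complex variety are algebraic (Grauert–Remmert / Riemann existence), so that $\C(\wti{X})$ is a genuine finite field extension of $\C(X)$ and the normalization makes sense; I would cite this rather than prove it. All of the ingredients — finite étale covers preserve smoothness, normalization in a finite extension, resolution of singularities and embedded resolution of divisors in characteristic zero — are standard, so the proof is really just an assembly, and the write-up should keep it short.
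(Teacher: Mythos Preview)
Your proposal is correct and follows essentially the same route as the paper: take the finite cover corresponding to $\wti{\pi}$ and compactify by normalizing a projective compactification of $X$ in the function field of the cover. You are more explicit than the paper on two points---invoking Riemann existence to algebraize the topological cover, and resolving the possibly singular normalization to obtain a smooth projective compactification---while the paper in turn spells out one point you leave implicit, namely that the normalization $\overline{X}'$ is projective because the pullback of an ample line bundle along the finite map $\overline{X}'\to\overline{X}$ remains ample.
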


\begin{proof}
Let $X$ be a smooth \qp variety, and let $Y\to X$ be a finite
cover. Choose a projective compactification $\overline{X}$
of $X$, and normalize $\overline{X}$ in the function field of
$Y$ to get $\overline{Y}$.  Then $\overline{Y}$ is a normal
variety containing $Y$ as an open subset.  Moreover,
$\overline{Y}$ is a projective variety, since the pullback
of an ample line bundle under the finite cover
$\overline{Y} \to \overline{X}$ is again ample. Thus,
$Y$ is a smooth \qp variety, and this finishes the proof.
\end{proof}

The analogous (and more difficult) result for
quasi-K\"ahler groups is proved in \cite[Lemma~4.1]{AN99}.

The basic structure of the cohomology support loci of
smooth, \qp varieties was established by Arapura \cite{Ar97}.
Additional information on the nature of these varieties
has been provided in work of Dimca \cite{Di07}, Dimca,
Papadima and Suciu \cite{DPS08, DPS09},  Artal-Bartolo,
Cogolludo and Matei \cite{ACM10}, and most recently,
by Budur and Wang \cite{BW12}.  The next theorem
summarizes some of those known results, in the form
needed here.

\begin{theorem}
\label{thm:arapura}
Let $X$ be a smooth, \qp variety, and set $H=H_1(X,\Z)/\tor$.
Then, for each $k\ge 1$, the following hold:
\begin{enumerate}
\item \label{a1}
Every irreducible component of
$\V_k(X)$ is of the form $\rho T$, where $T$ is an
algebraic subtorus of $\widehat{H}$, and $\rho$ is
a torsion element in $\widehat{H}$.

\item \label{a2}
If $\rho_1 T_1$ and $\rho_2 T_2$ are two such components,
then either $T_1=T_2$, or $T_1\cap T_2$ is finite.
\end{enumerate}
\end{theorem}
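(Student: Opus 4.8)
Since Theorem~\ref{thm:arapura} is stated as a repackaging of results already in the literature, my plan is not to argue from scratch but to assemble the cited structure theorems, making explicit which ingredient supplies each clause, and to isolate the one point --- the non-degeneracy in part~\eqref{a2} --- that is not completely formal. For part~\eqref{a1}, the engine is Arapura's theorem \cite{Ar97}: for a smooth \qp variety $X$, every \emph{positive}-dimensional irreducible component $W$ of $\V_k(X)$ has the form $W=\rho\cdot \im(f^{*})$, where $f\colon X\to\Sigma$ is an admissible map onto an orbicurve (in the sense of \cite{Ar97}), $f^{*}\colon\widehat{H_1(\Sigma)}\inj\widehat H$ is the induced closed embedding of tori, and $\rho\in\widehat H$ is a character of finite order --- the finiteness of $\rho$, together with the corresponding statement for the zero-dimensional (isolated) components, being precisely the content of the refinements of Dimca \cite{Di07}, Artal-Bartolo--Cogolludo--Matei \cite{ACM10}, and Budur--Wang \cite{BW12}. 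Collecting these, every component of $\V_k(X)$ is a torsion translate $\rho T$ of an algebraic subtorus $T\le\widehat H$, which is \eqref{a1}. (For the application in this paper one in fact only needs this in codimension one, where it reduces to \cite[Theorem~4.3]{DPS08}, but I keep the stated generality.)

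For part~\eqref{a2}, a short argument is required. Let $\rho_1 T_1$ and $\rho_2 T_2$ be two components. If one of the $T_i$ is the trivial torus --- i.e.\ the corresponding component is an isolated (torsion) point --- then $T_1\cap T_2=\{1\}$ is finite and there is nothing to prove; so assume both $T_i$ are positive-dimensional. By \eqref{a1}, up to finite index $T_i=\im(f_i^{*})$ for an admissible map $f_i\colon X\to\Sigma_i$ onto an orbicurve. Suppose, towards a contradiction, that $T_1\cap T_2$ is infinite; then it contains a positive-dimensional subtorus $T_0$. A very general character $\chi\in T_0$ then factors, up to a torsion ambiguity, through the orbifold fundamental group $\pi_1^{\mathrm{orb}}(\Sigma_i)$ for \emph{both} $i=1$ and $i=2$. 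Now invoke the rigidity half of the theory of pencils: a positive-dimensional component of a characteristic variety determines its admissible map up to equivalence of the target, since $\im(f^{*})$ recovers the pencil. Hence $f_1$ and $f_2$ are equivalent, the orbicurves $\Sigma_1$ and $\Sigma_2$ are identified under this equivalence, and therefore $T_1=T_2$, contradicting the hypothesis. This proves \eqref{a2}.

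The step that carries the real weight is this last rigidity statement --- that two orbifold fibrations whose associated subtori share a positive-dimensional piece must in fact coincide. In the compact K\"ahler setting this is classical (going back to Catanese and Bauer), and in the quasi-projective setting it is contained in Arapura \cite{Ar97} together with the later analyses in \cite{Di07, DPS08, DPS09, ACM10}; I would cite it rather than reprove it, since unpacking the pencil machinery (de Franchis--type finiteness, resolving the orbifold structure, and controlling the multiple fibers) is well beyond what the present paper needs. A secondary bookkeeping subtlety, also settled by those references, is that one must work with $\pi_1^{\mathrm{orb}}$ and with admissible (not merely holomorphic) maps for the description of components to be correct on the nose, and it is exactly the passage from the translated subset $\rho_i T_i$ to the subtorus $T_i$ that makes the dichotomy in \eqref{a2} clean.
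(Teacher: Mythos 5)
Your proposal matches the paper's own approach: Theorem~\ref{thm:arapura} is assembled from the literature rather than reproved, and you cite the same or equivalent sources. For~\eqref{a1} the paper cites Arapura \cite{Ar97} for positive-dimensional components (which for quasi-K\"ahler $X$ only gives unitary translates for isolated points) and then \cite[Theorem~1]{ACM10}, \cite[Theorem~1.1]{BW12} to get that isolated points are torsion for smooth quasi-projective $X$; for~\eqref{a2} the paper cites \cite[Theorem~4.2]{DPS08} for $k=1$ and \cite[Proposition~6.5]{ACM10} in general. One imprecision in your sketch of~\eqref{a2} is worth noting: the ``rigidity'' you invoke --- that a positive-dimensional component of $\V_k(X)$ determines its admissible map up to equivalence --- does not directly apply under your running hypothesis, which is only that $T_1\cap T_2$ is positive-dimensional, not that $\rho_1T_1=\rho_2T_2$. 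What is actually needed is the different (and stronger) statement that two inequivalent admissible maps to orbicurves have associated subtori intersecting in a finite set; this is essentially what \cite[Theorem~4.2]{DPS08} proves, by analyzing the product map $f_1\times f_2$. You do flag this as the step you would cite rather than reprove, and that is exactly what the paper does, so the overall structure of your argument is sound; just be aware that the rigidity statement as you phrased it is not the one doing the work.
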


\begin{proof}
Statement \eqref{a1} is proved in \cite{Ar97} for the wider
class of quasi-K\"ahler manifolds $X$, but only for
positive-dimensional irreducible components:
in that generality, the isolated points in $\V_k(X)$ are only
known to be unitary characters. Now, if $X$ is a smooth \qp
variety, it is shown in \cite[Theorem 1]{ACM10}, and also
in \cite[Theorem 1.1]{BW12}, that the isolated points in
$\V_k(X)$ are, in fact, torsion points.

Statement \eqref{a2} is proved in \cite[Theorem 4.2]{DPS08},
for $k=1$; the general case is established in
\cite[Proposition 6.5]{ACM10}.
\end{proof}

The intersection theory of translated subtori in a
complex algebraic torus was worked out by
E.~Hironaka in \cite{Hi96}, and was further
developed in \cite{Na09, SYZ13}.    In particular,
the following lemma holds.

\begin{lemma}
\label{lem:tt int}
Let $T_1$ and $T_2$ be two algebraic subtori in
$(\C^{*})^n$, and let $\rho_1$ and $\rho_2$ be two
elements in $(\C^{*})^n$.  Then
$\rho_1 T_1 \cap \rho_2 T_2\ne\emptyset$ if and
only if $\rho^{}_1 \rho_2^{-1}\in T_1  T_2$,
in which case $\dim (\rho_1 T_1 \cap \rho_2 T_2) =
\dim(T_1\cap T_2)$.
\end{lemma}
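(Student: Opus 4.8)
The plan is to reduce the statement about translated subtori to the theory of characters on complex algebraic tori, treating $(\C^*)^n$ as $\Hom(\Z^n,\C^*)$ and exploiting the fact that subtori correspond to subgroups of $\Z^n$. First I would recall the standard dictionary: an algebraic subtorus $T\subseteq(\C^*)^n$ is precisely the connected component of the identity in the vanishing locus of a sublattice $L_T\subseteq\Z^n$ of characters, i.e. $T=\{\rho : \rho(\ell)=1 \text{ for all }\ell\in L_T\}$ where $L_T$ is primitive (saturated). The product $T_1T_2$ is again an algebraic subtorus, with associated primitive sublattice $L_{T_1}\cap L_{T_2}$, and the intersection $T_1\cap T_2$ has identity component associated to the saturation of $L_{T_1}+L_{T_2}$; in particular $T_1\cap T_2$ is finite exactly when $L_{T_1}+L_{T_2}$ has finite index in $\Z^n$, and $\dim(T_1\cap T_2)=n-\rank(L_{T_1}+L_{T_2})$.

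Next I would prove the emptiness criterion. The coset $\rho_1T_1\cap\rho_2T_2$ is nonempty if and only if there exist $\tau_1\in T_1$, $\tau_2\in T_2$ with $\rho_1\tau_1=\rho_2\tau_2$, i.e. $\rho_1\rho_2^{-1}=\tau_2\tau_1^{-1}\in T_2T_1=T_1T_2$. This is immediate and essentially definitional; the only mild point is noting that $T_1T_2$ is a subgroup (being the image of $T_1\times T_2$ under multiplication), so no closure issues arise — the image of a homomorphism of algebraic groups is already closed, or one can argue directly with lattices.

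For the dimension statement, assume $\rho_1T_1\cap\rho_2T_2\ne\emptyset$ and pick a point $\sigma$ in the intersection. Then $\sigma^{-1}(\rho_1T_1\cap\rho_2T_2)$ contains the identity and is visibly stable under multiplication by $T_1\cap T_2$; conversely any element of it lies in both $\sigma^{-1}\rho_1T_1=T_1$ (using $\sigma\in\rho_1T_1$) and $\sigma^{-1}\rho_2T_2=T_2$, so $\sigma^{-1}(\rho_1T_1\cap\rho_2T_2)=T_1\cap T_2$. Translating back, $\rho_1T_1\cap\rho_2T_2=\sigma(T_1\cap T_2)$ is a coset of $T_1\cap T_2$, hence has the same dimension as $T_1\cap T_2$. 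This coset-translation argument is the cleanest route and sidesteps any explicit computation.

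The main obstacle, such as it is, is purely expository: deciding how much of the subtorus/sublattice dictionary to recall versus cite to \cite{Hi96, Na09, SYZ13}. There is no genuine difficulty — the emptiness criterion is formal, and once nonemptiness is known the intersection is literally a translate of $T_1\cap T_2$, forcing equality of dimensions. I would therefore keep the proof to a few lines: establish that $T_1T_2$ is a subgroup, deduce the nonemptiness criterion, and then observe that a nonempty intersection of two cosets of subgroups is a coset of the intersection subgroup.
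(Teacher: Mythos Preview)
Your argument is correct. Note, however, that the paper does not actually supply a proof of this lemma: it is stated as a consequence of the intersection theory of translated subtori developed in \cite{Hi96} and refined in \cite{Na09, SYZ13}, and is used as a black box in the proof of Theorem~\ref{thm:deltaquasikahler}. So there is no in-text proof to compare against.

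That said, your write-up is sound and in fact more direct than one might expect from the cited sources. The lattice dictionary you set up in the first paragraph is accurate but ultimately unnecessary for the lemma as stated: the emptiness criterion and the dimension equality are, as you observe, purely group-theoretic facts about cosets in an abelian group, and require nothing beyond the observation that multiplication by a fixed element of $(\C^*)^n$ is an automorphism of varieties. You could safely drop the discussion of $L_{T_1}$, $L_{T_2}$, saturations, and ranks, and keep only the last two paragraphs. The one point worth retaining from the first paragraph is that $T_1T_2$ is a closed algebraic subgroup (so that the condition $\rho_1\rho_2^{-1}\in T_1T_2$ is algebraically meaningful), but even this follows immediately from the fact that $T_1T_2$ is the image of the connected group $T_1\times T_2$ under a homomorphism of algebraic tori, hence is itself a subtorus.
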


In view of this lemma, Theorem \ref{thm:arapura} has
the following immediate corollary.

\begin{corollary}
\label{cor:ttk}
Let $\pi$ be a \qp group.  Then, for
each $k\ge 1$, the irreducible components of
$\V_{k}(\pi)$ are (possibly torsion-translated) subtori
of the character torus $\widehat{H}$.  Furthermore,
any two distinct components of $\V_{k}(\pi)$ meet
in at most finitely many (torsion) points.
\end{corollary}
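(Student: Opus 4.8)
The plan is to read this off directly from Theorem~\ref{thm:arapura} and Lemma~\ref{lem:tt int}; the deduction is essentially formal, and I do not expect a genuine obstacle. First I would write $\pi=\pi_1(X)$ for some smooth, \qp variety $X$, put $n=b_1(\pi)$, and identify the character torus $\widehat{H}$ with $(\C^{*})^n$, so that Lemma~\ref{lem:tt int} becomes applicable. Since $\V_k(\pi)=\V_k(X)$ depends only on $\pi$, the first assertion of the corollary---that every irreducible component of $\V_k(\pi)$ is a (possibly torsion-translated) subtorus of $\widehat{H}$---is nothing but Theorem~\ref{thm:arapura}\eqref{a1}. So the real content lies in the statement about pairwise intersections.

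For that, I would take two distinct irreducible components of $\V_k(X)$, written as $\rho_1 T_1$ and $\rho_2 T_2$ with $T_i$ subtori and $\rho_i$ torsion, and split according to the dichotomy in Theorem~\ref{thm:arapura}\eqref{a2}. If $T_1\cap T_2$ is finite, then by Lemma~\ref{lem:tt int} the intersection $\rho_1 T_1\cap\rho_2 T_2$ is either empty or has dimension $\dim(T_1\cap T_2)=0$, hence is finite. If instead $T_1=T_2=:T$, then a non-empty intersection would force $\rho^{}_1\rho_2^{-1}\in T\cdot T=T$ by Lemma~\ref{lem:tt int}, so that $\rho_1 T=\rho_2 T$, contradicting distinctness; thus in this case the two components are disjoint. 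Either way, two distinct components meet in at most finitely many points.

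The only mildly technical point is the parenthetical claim that these finitely many points are torsion characters. I would handle it by noting that a non-empty $\rho_1 T_1\cap\rho_2 T_2$ is a single coset of the finite group $T_1\cap T_2$, all of whose elements automatically have finite order, and that the coset representative may be taken torsion: since $\rho^{}_1\rho_2^{-1}$ is torsion and the multiplication map $T_1\times T_2\to T_1 T_2$ is an isogeny (its kernel $\{(t,t^{-1}):t\in T_1\cap T_2\}$ is finite), the element $\rho^{}_1\rho_2^{-1}$ lifts to a torsion element $(s_1,s_2)$ with $s_1 s_2=\rho^{}_1\rho_2^{-1}$, and then $\rho_1 s_1^{-1}=\rho_2 s_2$ is a torsion character lying in $\rho_1 T_1\cap\rho_2 T_2$. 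This is exactly the kind of bookkeeping with translated subtori carried out in \cite{Hi96}, so I would either cite it or record it in a single line. In short, once Theorem~\ref{thm:arapura} and Lemma~\ref{lem:tt int} are available, the corollary follows immediately, with the torsion refinement as the only slightly delicate ingredient.
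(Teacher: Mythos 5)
Your argument is exactly the deduction the paper has in mind: the paper presents this as an ``immediate corollary'' of Theorem~\ref{thm:arapura} and Lemma~\ref{lem:tt int} without writing out details, and you have simply filled in that intended reasoning (the case split $T_1=T_2$ versus $T_1\cap T_2$ finite, plus Lemma~\ref{lem:tt int}). Your extra observation that the finitely many intersection points are torsion---via the finiteness of the kernel of $T_1\times T_2\to T_1T_2$---is correct and a useful explicit justification of the parenthetical claim.
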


The Alexander polynomial of a \qp group must satisfy
certain rather restrictive conditions, though not as
stringent as in the K\"ahler case.  The next result
sharpens Theorem~4.3 from \cite{DPS08}, while
following a similar approach.

As before, given a finitely-generated group $\pi$, let
$H=H_1(\pi;\Z)/\tor$, written multiplicatively.
Given a polynomial $p(t)=\sum_{i\in\Z}a_it^i \in \zt$,
and an element $h\in H$, we put
$p(h):=\sum_{i\in \Z}a_ih^i\in \Z[H]$.

\begin{theorem}
\label{thm:deltaquasikahler}
Let $\pi$ be a \qp group, and assume $b_1(\pi)\ne 2$.
Then, for any $k\ge 0$, the following hold.

\begin{enumerate}
\item \label{q1}
The polynomial $\Delta_\pi^k$ is either zero,
or the Newton polytope of $\Delta^k_\pi$ is
a point or a line segment. In particular, $\th(\pi)\leq 1$.

\item \label{q2}
There exists a polynomial $p(t)\in \zt$ of the form
$c\cdot q(t)$, where $c\in \Z\sm \{0\}$ and $q(t)$ is a product
of cyclotomic polynomials, and an element $h\in H$,
such that $\Delta^k_\pi=p(h)$.
\end{enumerate}
\end{theorem}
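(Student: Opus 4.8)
The plan is to deduce both parts of Theorem~\ref{thm:deltaquasikahler} from the structural results on the characteristic varieties of \qp groups, namely Corollary~\ref{cor:ttk}, combined with the dictionary between Alexander polynomials and codimension-one components of $\V_k(\pi)$ recorded in Lemma~\ref{lem:delta cv} and Theorem~\ref{thm:parallel}. Fix $k\ge 0$ and set $\Delta:=\Delta_\pi^{k}$, $n=b_1(\pi)$. If $\Delta=0$ there is nothing to prove, so assume $\Delta\ne 0$. The cases $n=0$ and $n=1$ are easy to dispose of directly: when $n=0$ the ring $\Z[H]=\Z$ and $\Delta$ is an integer, hence its Newton polytope is a point; when $n=1$ the Newton polytope is automatically a point or a line segment, so \eqref{q1} is immediate, and for \eqref{q2} we observe that in $\Z[H]=\zt$ we may as well take $h$ a generator and $p(t)=\Delta$, so the content is that the classical one-variable Alexander polynomial of a \qp group is (a constant times) a product of cyclotomics --- this is precisely the content of Theorem~\ref{thm:arapura}\eqref{a1}, since the roots of $\Delta$ are, by Hironaka's formula \eqref{eq:ve} and Lemma~\ref{lem:delta cv}, exactly the codimension-one (here: all positive-codimension) components of $\V_{k+1}(\pi)\subset\C^*$, which are torsion-translated subtori of $\C^*$, i.e.\ finite sets of roots of unity.

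The main case is $n=b_1(\pi)\ge 3$ (the hypothesis $b_1(\pi)\ne2$ is used here to exclude $n=2$, where two non-parallel lines through the origin can coexist among codimension-one subtori of $(\C^*)^2$). First I would prove \eqref{q1}. By Corollary~\ref{cor:ttk}, every irreducible component of $\V_{k+1}(\pi)$ is a torsion-translated subtorus, and any two distinct components meet in only finitely many points. Consider the codimension-one components, whose union is $\cv_{k+1}(\pi)$; by Lemma~\ref{lem:delta cv}\eqref{dc2} (using $b_1\ge 2$ and $\Delta\ne 0$) these are exactly $V(\Delta)$. If there are no such components then $\Delta\doteq\const$ and its Newton polytope is a point. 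Otherwise, let $\rho_1 T_1$ and $\rho_2 T_2$ be two codimension-one components with $T_1\ne T_2$. Each $T_i$ is a codimension-one subtorus of $(\C^*)^n$, hence the kernel of a primitive character $\chi_i\in H$; the translate $\rho_i T_i$ is then $\{\chi_i = c_i\}$ for some $c_i\in\C^*$. Because $n\ge 3$, two distinct codimension-one subtori $T_1,T_2$ intersect in a subtorus of dimension $n-2\ge 1$, hence in an \emph{infinite} set; by Lemma~\ref{lem:tt int}, $\rho_1T_1\cap\rho_2T_2$ is either empty or also of dimension $n-2\ge1$, hence infinite --- contradicting Corollary~\ref{cor:ttk}. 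Therefore all codimension-one components share the same torus $T$, i.e.\ they are parallel codimension-one subtori. Theorem~\ref{thm:parallel} (with $b_1(X)\ge 2$) now gives that $\Newt(\Delta)$ is a line segment, proving \eqref{q1}.

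For \eqref{q2} I would work in the coordinate system adapted to the common torus $T=\ker\chi$: choosing a basis of $H$ so that $\chi=t_1$, the parallel components are $\{t_1=z_1\},\dots,\{t_1=z_m\}$ with the $z_j$ distinct and non-zero. As in the proof of Theorem~\ref{thm:parallel}, matching the two decompositions of $V(\Delta)$ into irreducibles forces $\Delta\doteq \prod_{j=1}^m (t_1-z_j)^{\a_j}$ (times a nonzero integer constant, coming from the content and the unit ambiguity), which we may rewrite as $p(h)$ with $h=\chi=t_1\in H$ and $p(t)=c\prod_j(t-z_j)^{\a_j}\in\C[t]$ a priori; but in fact $\Delta\in\Z[H]$, so $p(t)\in\zt$, and it remains only to see that each $z_j$ is a root of unity, equivalently that $p$ is, up to the constant $c$, a product of cyclotomic polynomials. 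This is where Theorem~\ref{thm:arapura}\eqref{a1} enters decisively: since $\{t_1=z_j\}$ is an irreducible component of $\V_{k+1}(\pi)$, it is of the form $\rho_j T$ with $\rho_j$ a \emph{torsion} character, so evaluating $t_1$ on $\rho_j$ shows $z_j=\rho_j(\chi)$ is a root of unity. Hence $q(t):=\prod_j(t-z_j)^{\a_j}$ has all roots on the unit circle and lies in $\zt$ (being $\Delta/c$ for an appropriate integer $c$), so by Kronecker's theorem $q$ is a product of cyclotomic polynomials, completing \eqref{q2}.

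The step I expect to be the main obstacle is the rigidity argument in \eqref{q2}: extracting from "$\V_{k+1}(\pi)$ has only parallel codimension-one components, each torsion-translated" the precise normal form $\Delta=p(h)$ with $p$ a constant times a product of cyclotomics. One has to be careful that: (i) the unit and integer-content ambiguities in $\Delta$ are absorbed into the constant $c$ and do not affect the conclusion; (ii) passing to the adapted coordinates on $\widehat H$ is legitimate, i.e.\ corresponds to an honest change of basis of the free abelian group $H$ (so that "$t_1$" really is an element $h\in H$); and (iii) the torsion-translation statement of Theorem~\ref{thm:arapura}\eqref{a1} is applied to the \emph{same} component whose defining equation is $t_1-z_j$, so that $z_j$ is genuinely forced to be a root of unity rather than merely an algebraic number. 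The multivariable bookkeeping --- keeping track of which components are codimension-one versus higher, and ruling out the $n=2$ coincidence of non-parallel lines --- is the technical heart; everything else is a direct appeal to Lemma~\ref{lem:delta cv}, Theorem~\ref{thm:parallel}, Lemma~\ref{lem:tt int} and Kronecker's theorem.
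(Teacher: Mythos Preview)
Your proposal is correct and follows essentially the same route as the paper's own proof: both reduce part~\eqref{q1} to showing, via Corollary~\ref{cor:ttk} and Lemma~\ref{lem:tt int}, that two non-parallel codimension-one components of $\V_{k+1}(\pi)$ would meet in dimension $n-2\ge 1$, and then invoke Theorem~\ref{thm:parallel}; and both obtain part~\eqref{q2} by writing $\Delta=p(t_1)$ and using the torsion-translation clause of Theorem~\ref{thm:arapura}\eqref{a1} to force the roots of $p$ to be roots of unity. Two minor remarks: the ``either empty'' alternative in your intersection argument does not actually occur (for distinct codimension-one subtori one has $T_1T_2=\widehat{H}$, so Lemma~\ref{lem:tt int} guarantees $\rho_1T_1\cap\rho_2T_2\ne\emptyset$), and your final appeal to Kronecker's theorem is superfluous, since you have already shown directly that each $z_j$ is a root of unity.
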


\begin{proof}
We start with part \eqref{q1}.  Set $n=b_1(\pi)$.  If either
$n=0$ or $1$ or $\Delta_\pi^k$ is zero, there is nothing
to prove. So we may as well assume that $n\ge 3$, and
$\Delta_\pi^k\ne 0$.

From Corollary \ref{cor:ttk}, we know that all
irreducible components of $\V_{k+1}(\pi)$ are
(possibly translated) subtori of $\widehat{H}=(\C^*)^n$,
meeting in at most finitely many points.
Suppose $\rho_1 T_1$ and $\rho_2 T_2$ are
two components of codimension $1$, that
are not parallel. Then, by Lemma \ref{lem:tt int},
\begin{align}
\label{eq:dim}
\dim(\rho_1 T_1\cap \rho_2 T_2) &= \dim(T_1\cap T_2) \\
&=\dim(T_1)+\dim(T_2)-\dim(T_1T_2)= n-2 \ge 1, \notag
\end{align}
a contradiction.  Thus, all codimension-one components of
$\V_{k+1}(\pi)$ must be parallel subtori.  Claim \eqref{q1} then
follows from Theorem \ref{thm:parallel}.

We now prove part \eqref{q2}.  Using the previous part, we
may find a polynomial $p\in \Z[t]$ such that, after a change of
variables if necessary, $\Delta^k_{\pi}(t_1,\dots, t_n)=p(t_1)$.
Clearly, $V(\Delta^k_{\pi}) =V(p)\times (\C^*)^{n-1}$. On the
other hand, by Lemma \ref{lem:delta cv} and Theorem \ref{thm:arapura},
the hypersurface $V(\Delta^k_{\pi})$ is a (possibly torsion-translated)
subtorus in $(\C^*)^n$.   Thus, all roots of $p$ must be roots of
unity, and claim \eqref{q2} follows.
\end{proof}

\section{Three-manifold groups}
\label{section:thurston}

\subsection{The Thurston norm and fibered classes}
\label{subsection:thurston}

Throughout this section, $N$ will be a $3$-manifold with either
empty or toroidal boundary.  Given a  surface $\Sigma$ with
connected components $\Sigma_1, \dots , \Sigma_s$, we put
$\chi_{-}(\Sigma)=\sum_{i=1}^{s} \max\{-\chi(\Sigma_i),0\}$.

Let $\phi\in H^1(N;\Z) = \Hom(\pi_1(N),\Z)$ be a non-trivial
cohomology class.
We say that $\phi$ is a \emph{fibered class}\/ if there exists
a fibration $p\colon N\to S^1$ such that the induced map
$p_*\colon \pi_1(N)\to \pi_1(S^1)=\Z$ coincides with $\phi$.
It is well-known that $\phi$ is a fibered class if and only if
$n\phi$ is a fibered class, for any non-zero integer $n$.
We now say that $\phi\in H^1(N;\Q)\sm \{0\}$ is fibered if
it is a rational multiple of a fibered integral class.

The \emph{Thurston norm}\/ of  $\phi\in H^1(N;\Z)$  is defined as
\begin{equation}
\label{eq:th norm}
\| \phi \|_T=\min \big\{ \chi_-(\Sigma) \mid \text{$\Sigma$ a properly
embedded surface in $N$, dual to $\phi$} \big\}.
\end{equation}
W.~Thurston \cite{Th86} (see also \cite[Chapter~10]{CC03})
proved the following results:
\bn
\item \label{th1}
$\|-\|_T$ defines a norm%
\footnote{Throughout this paper, we allow norms to be
degenerate; that is, we do not distinguish between the
notions of norm and seminorm.}
on $H^1(N;\Z)$, which can be extended to a norm
$\|-\|_T$ on $H^1(N;\Q)$.
\item \label{th2}
The unit norm ball, $B_T=\{\phi\in H^1(N;\Q) \mid \|\phi \|_T\leq 1\}$,
is a rational polyhedron with finitely many sides.
\item \label{th3}
There exist open, top-dimensional faces $F_1,\dots,F_k$
of the Thurston norm ball such that
$\{\phi\in H^1(N;\Q) \mid  \text{$\phi$ fibered}\}=
\bigcup_{i=1}^k \Q^+ F_i$.
 \en

Thus, the set of fibered classes form a cone on certain open,
top-dimensional faces of $B_T$, which we refer to as
the \emph{fibered faces}\/ of the Thurston norm ball.
The polyhedron $B_T$ is evidently symmetric in the origin. We say that
two faces $F$ and $G$ are \emph{equivalent}\/ if $F=\pm G$.
Note that a face $F$ is fibered if and only if $-F$ is fibered.

The Thurston norm is degenerate in general, for instance, for
$3$-manifolds with homologically essential tori.  On the other
hand, the Thurston norm of a hyperbolic $3$-manifold is
non-degenerate, since a hyperbolic $3$-manifold admits
no homologically essential surfaces of non-negative
Euler characteristic.

We denote by $\BT$ the dual of the Thurston norm ball
of $N$, that is,
\begin{equation}
\label{eq:dual ball}
\BT:=\{ p\in H_1(N;\Q) \mid \text{$\phi(p)\leq 1$ for all
$\phi\in H^1(N;\Q)$ with $\|\phi\|_T\leq 1$} \}.
\end{equation}

From the above discussion, we know that $\BT$ is a compact
convex polyhedron in $H_1(N,\Q)$. Its vertices are canonically
in one-to-one correspondence with the top-dimensional faces
of the Thurston norm ball  $B_T$.  (Thurston \cite{Th86}
showed that these vertices correspond in fact to integral
classes in $H_1(N;\Z)/\tor\subset H_1(N;\Q)$.) We say
that a vertex $v$ of $\BT$ is fibered if it corresponds to
a fibered face of $B_T$.

\subsection{Quasi-fibered classes}
\label{subsec:qf}
The Thurston norm and the set of fibered classes `behave well'
under going to finite covers.
More precisely, the following proposition holds:

\begin{proposition}
\label{prop:tnfinitecover}
Let $p\colon N'\to N$ be a finite cover and let $\phi\in H^1(N;\Q)$.
Then $\phi$ is fibered if and only if $p^*(\phi)\in H^1(N';\Q)$ is fibered.
Furthermore,
\[
\| p^*(\phi) \|_T=[N':N]\cdot \|\phi\|_T.
\]
In particular, the map $p^*\colon H^1(N;\Q)\to H^1(N';\Q)$ is,
up to the scale factor $[N':N]$, an isometry which maps fibered cones
into fibered cones.
\end{proposition}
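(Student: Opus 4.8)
The plan is to prove Proposition~\ref{prop:tnfinitecover} by assembling three ingredients: the transfer/multiplicativity behavior of the Thurston norm under finite covers, the compatibility of fibrations with finite covers, and a short linear-algebra argument that packages these into the stated statement about isometries and cones.

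\emph{Step 1: The norm identity.} First I would establish that $\|p^*(\phi)\|_T=[N':N]\cdot\|\phi\|_T$ for every $\phi\in H^1(N;\Q)$. Since both sides are homogeneous of degree one and continuous in $\phi$, it suffices to treat integral classes $\phi\in H^1(N;\Z)$. Given a Thurston-norm-minimizing properly embedded surface $\Sigma\subset N$ dual to $\phi$, its preimage $p^{-1}(\Sigma)\subset N'$ is a properly embedded surface dual to $p^*(\phi)$, and each component of $N'$ covering $N$ contributes; because $\chi_-$ is multiplicative under covers of surfaces (Euler characteristic is multiplicative and no new non-negative-Euler-characteristic pieces are created), one gets $\|p^*(\phi)\|_T\le [N':N]\cdot\chi_-(\Sigma)=[N':N]\cdot\|\phi\|_T$. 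The reverse inequality is the more delicate half: given a norm-minimizing surface $\Sigma'\subset N'$ dual to $p^*(\phi)$, one must produce a surface downstairs of controlled complexity. This is exactly Gabai's theorem that the Thurston norm is multiplicative under finite covers (\cite{Ga83}), which I would simply cite; combined with the easy inequality this gives the equality.

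\emph{Step 2: Fibered classes pull back and push forward.} If $\phi\in H^1(N;\Q)$ is fibered, represented up to scaling by a fibration $p_N\colon N\to S^1$, then $p_N\circ p\colon N'\to S^1$ is a fibration (a composition of a covering with a fibration is a fibration), inducing $p^*(\phi)$ up to scaling, so $p^*(\phi)$ is fibered. Conversely, if $p^*(\phi)$ is fibered, then by Stallings' fibration criterion (or directly from the fact, used already in \S\ref{subsection:thurston}, that $\phi$ is fibered iff its kernel is finitely generated and one can invoke the result of Stallings), fiberedness descends; alternatively one can cite the standard fact (e.g.\ from \cite{AFW12}) that $\phi$ is fibered if and only if $p^*(\phi)$ is. I would invoke this as a known result rather than reprove it.

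\emph{Step 3: Packaging.} With Steps 1 and 2 in hand, the final sentence follows formally. The map $p^*\colon H^1(N;\Q)\to H^1(N';\Q)$ is injective (a transfer argument: $p_*p^*=[N':N]\cdot\id$), hence by Step 1 it is, after rescaling the target norm by $[N':N]^{-1}$, an isometric embedding onto its image. By Step 2 it carries the fibered cone of $N$ into the fibered cone of $N'$. I would state this cleanly in one or two sentences. The main obstacle is Step 1's hard inequality $\|p^*(\phi)\|_T\ge[N':N]\|\phi\|_T$, i.e.\ that a norm-minimizing surface upstairs cannot be cheaper than the pullback of one downstairs; this is Gabai's deep result via sutured manifold hierarchies, so the honest move is to cite it. Everything else is either elementary topology of fibrations and covers or straightforward linear algebra.
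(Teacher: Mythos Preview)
Your proposal is correct and follows essentially the same approach as the paper: the paper's proof simply cites Stallings' fibration theorem \cite{St62} for the equivalence of fiberedness of $\phi$ and $p^*(\phi)$, and Gabai \cite{Ga83} for the multiplicativity of the Thurston norm under finite covers. You have unpacked these citations by supplying the easy directions and the packaging in Step~3, but the substantive content---Stallings for fiberedness and Gabai for the hard inequality in the norm identity---is identical.
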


\begin{proof}
The first statement is an immediate consequence of Stallings'
fibration theorem \cite{St62}, while the second statement follows
from work of Gabai \cite{Ga83}.
\end{proof}

For future purposes we also introduce the following definition:
We say that a class $\phi\in H^1(N;\Q)$ is \emph{quasi-fibered}\/ if $\phi$
lies on the boundary of a fibered cone of the Thurston norm ball of $N$.
Note that $\phi$ is quasi-fibered if and only if $\phi$  is the limit of
a sequence of fibered classes in $H^1(N;\Q)$.
Proposition \ref{prop:tnfinitecover}, then, has the
following immediate corollary.

\begin{corollary}
\label{cor:pull qf}
Let $p\colon N'\to N$ be a finite cover.
\begin{enumerate}
\item \label{qf1}
If $\phi\in H^1(N;\Q)$ is a quasi-fibered class, then $p^*(\phi)\in H^1(N';\Q)$
is also quasi-fibered.
\item \label{qf2}
Pull-backs of inequivalent faces of the Thurston norm ball of
$N$ lie on inequivalent faces of the Thurston norm ball of $N'$.
\end{enumerate}
\end{corollary}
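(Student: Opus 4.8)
The plan is to deduce both parts directly from Proposition~\ref{prop:tnfinitecover}, using only that $p^{*}\colon H^1(N;\Q)\to H^1(N';\Q)$ is a linear injection which rescales the Thurston norm by the fixed factor $c:=[N':N]$, so that $\|p^{*}\alpha\|_T=c\,\|\alpha\|_T$ for all $\alpha$, and which carries fibered classes to fibered classes.

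For part~\eqref{qf1} I would use the description recorded just above the corollary: a class $\phi\in H^1(N;\Q)$ is quasi-fibered exactly when it is the limit of a sequence $(\phi_n)$ of fibered classes. Since $p^{*}$ is linear, hence continuous, $p^{*}(\phi)=\lim_n p^{*}(\phi_n)$, and by Proposition~\ref{prop:tnfinitecover} each $p^{*}(\phi_n)$ is fibered; thus $p^{*}(\phi)$ is a limit of fibered classes in $H^1(N';\Q)$, so it is quasi-fibered. (Equivalently, via the ``fibered $\Leftrightarrow$ fibered'' half of Proposition~\ref{prop:tnfinitecover}: $p^{*}\phi$ is not fibered, yet, since there are only finitely many fibered cones, a subsequence of the $p^{*}\phi_n$ stays in one of them, so $p^{*}\phi$ lies in its closure, hence on its boundary.)

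For part~\eqref{qf2} the workhorse is the standard duality dictionary for the Thurston norm: on the cone over a top-dimensional face of $B_T(N)$ the norm restricts to a nonzero linear functional (the corresponding vertex of the dual ball), and two nonzero classes $\alpha,\beta$ lie on the cones over a common top-dimensional face if and only if $\|\alpha+\beta\|_T=\|\alpha\|_T+\|\beta\|_T$. Given inequivalent faces $F$ and $G$ of $B_T(N)$, choose classes $\phi\in\Q^{+}F$ and $\psi\in\Q^{+}G$; both then have positive Thurston norm. Because $F\ne\pm G$, a short convexity argument (the open top-dimensional face containing $\phi/\|\phi\|_T$ is the only proper face of $B_T(N)$ through that point, and likewise for $\psi$) shows that neither $\|\phi+\psi\|_T=\|\phi\|_T+\|\psi\|_T$ nor $\|\phi-\psi\|_T=\|\phi\|_T+\|\psi\|_T$ can hold. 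Multiplying through by $c$ and using the injectivity of $p^{*}$, the same two strict inequalities persist for the nonzero classes $p^{*}\phi$ and $p^{*}\psi$ in $H^1(N';\Q)$. By the dictionary, $p^{*}\phi$ and $p^{*}\psi$ do not lie on the cone over a common face of $B_T(N')$, and neither do $p^{*}\phi$ and $-p^{*}\psi$; that is, the faces of $B_T(N')$ carrying them are inequivalent.

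The part needing the most care is \eqref{qf2}: since $p^{*}$ is typically not surjective, one cannot simply assert that it ``takes faces to faces'', so the whole argument is routed through the norm identity of Proposition~\ref{prop:tnfinitecover}; one must also keep track of the sign, in order to get ``inequivalent'' ($F\ne\pm G$) rather than merely ``distinct'', and dispatch the (routine) checks that the classes involved avoid the degenerate locus of the Thurston norm. Part~\eqref{qf1}, by contrast, is immediate once the limit characterization of quasi-fibered classes is in hand.
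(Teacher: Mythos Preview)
Your proposal is correct and proceeds exactly along the lines the paper intends: the paper gives no proof at all, simply declaring the corollary ``immediate'' from Proposition~\ref{prop:tnfinitecover}, and you have supplied precisely the details (the limit characterization of quasi-fibered classes for part~\eqref{qf1}, and the triangle-equality criterion for lying on a common face for part~\eqref{qf2}) that make that immediacy explicit. There is nothing to correct.
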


\subsection{Thurston norm and Alexander norm}
\label{subsec:thurston}

As before, let $N$  be a $3$-manifold with empty or toroidal boundary.
In \cite{Mc02}, McMullen defined the {\em Alexander norm}\/
on $H^1(N;\Q)$, as follows.

Let $\Delta_N\in \Z[H]$ be the Alexander polynomial of $N$,
where $H=H_1(N;\Z)/\tor$.  We write
\begin{equation}
\label{eq:delta}
\Delta_{N}=\sum_{h\in H}a_hh.
\end{equation}

Let $\phi \in H^1(N;\Q)$.  If $\Delta_N=0$, then we define
$\|\phi\|_A=0$.  Otherwise, the Alexander norm of $\phi$
is defined as
\begin{equation}
\label{eq:a-norm}
\|\phi\|_A:=\max\,\{ \phi(a_h)-\phi(a_g)\mid \text{$g,h\in H$ with
$a_g\ne 0$ and $a_h\ne 0$}\}.
\end{equation}
This evidently defines a norm on $H^1(N;\Q)$.  We now have the following theorem:

\begin{theorem}
\label{thm:mcmullen}
Let $N$  be a  $3$-manifold
with empty or toroidal boundary and such that $b_1(N)\geq 2$. Then
\be
\label{equ:mcm}
\|\phi\|_A\leq \|\phi\|_T \text{ for any }\phi\in H^1(N;\Q).
\ee
Furthermore, equality holds for any quasi-fibered class.
\end{theorem}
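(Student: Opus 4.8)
This is a theorem of McMullen \cite{Mc02}; the plan is to prove the inequality for all classes first, and then upgrade it to an equality on the closed fibered cone. Both $\|\cdot\|_A$ and $\|\cdot\|_T$ are continuous and positively homogeneous, so it suffices to treat a primitive integral class $\phi\in H^1(N;\Z)$. Pick a properly embedded surface $\Sigma\subset N$ dual to $\phi$ with $\chi_-(\Sigma)=\|\phi\|_T$; since $\phi$ is primitive we may take $\Sigma$ connected and non-separating, after compressing away all sphere and disk components. Cutting $N$ along $\Sigma$ gives a connected cobordism $M=N|\Sigma$ with two copies $\Sigma_+,\Sigma_-$ of $\Sigma$ in $\partial M$, and the infinite cyclic cover $N_\phi\to N$ corresponding to $\ker(\phi)$ is the infinite gluing $\bigcup_{n\in\Z}t^nM$ along the $\Sigma_\pm$. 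Writing $\Lambda=\zt$, a Mayer--Vietoris argument over $\Lambda$ (the connecting map into $H_0(\Sigma)\otimes\Lambda$ vanishes, since $1-t$ is injective on $\Lambda$ and $\Sigma,M$ are connected) gives
\[
H_1(N_\phi)\;\cong\;\op{coker}\big((i_+)_*-t\,(i_-)_*\colon H_1(\Sigma)\otimes\Lambda\longrightarrow H_1(M)\otimes\Lambda\big),
\]
where $i_\pm\colon\Sigma\hookrightarrow M$ are the two inclusions.

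\textbf{From the cyclic cover to the Alexander norm.} This presentation has generating set $H_1(M)\otimes\Lambda$ and entries of $t$-degree at most one, so (after stabilizing to a square presentation) $\ord_\Lambda H_1(N_\phi)$ is a gcd of square minors and has $t$-breadth at most $\rank_\Z H_1(\Sigma)$. By definition, $\|\phi\|_A$ is the width in the $\phi$-direction of the Newton polytope of the multivariable Alexander polynomial $\Delta_N\in\Z[H]$, equivalently the $t$-breadth of the image of $\Delta_N$ under the specialization $\Z[H]\to\Lambda$ induced by $\phi$. The comparison between this specialization of $\Delta_N$ and $\ord_\Lambda H_1(N_\phi)$ is where the work lies: the two differ by a unit and by $(t-1)$-factors originating from the $H_0$- and $H_2$-terms of $N_\phi$, and it is precisely the hypothesis $b_1(N)\ge2$ that makes this comparison yield a clean bound (for $b_1(N)=1$ an extra correction survives, the source of the $+1$ in McMullen's general statement). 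Once this is sorted out, a closer look at the inclusions of the boundary circles $\partial\Sigma\hookrightarrow\partial M$ improves the crude $\rank_\Z H_1(\Sigma)$ to $\chi_-(\Sigma)$, giving $\|\phi\|_A\le\chi_-(\Sigma)=\|\phi\|_T$.

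\textbf{Equality on fibered and quasi-fibered classes.} If $\phi$ is fibered with connected fiber $F$, then $N_\phi\simeq F$ and the deck translation $t$ acts on $H_1(F)$ as the monodromy $h_*$; thus $H_1(N_\phi)=\op{coker}(tI-h_*\colon H_1(F)\otimes\Lambda\to H_1(F)\otimes\Lambda)$ is $\Lambda$-torsion with order $\det(tI-h_*)$, a polynomial of degree $\dim_\Q H_1(F;\Q)$ whose constant term is $\pm\det(h_*)=\pm1\ne0$, hence of $t$-breadth exactly $\dim_\Q H_1(F;\Q)$. Running this through the dictionary of the previous step gives $\|\phi\|_A=\chi_-(F)=\|\phi\|_T$ on the fibered cone. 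Since both norms are continuous and the fibered classes are dense in the closed fibered cone, the equality extends to every quasi-fibered class.

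\textbf{Expected main obstacle.} The topological reduction and the monodromy computation are routine; the real difficulty is the middle step — converting a crude degree count, which only gives $\|\phi\|_A\le\chi_-(\Sigma)+O(1)$, into the sharp inequality by keeping exact track of the boundary circles $\partial\Sigma$ and of the $(t-1)$-ambiguity relating $\ord_\Lambda H_1(N_\phi)$ to the honest multivariable Alexander polynomial. This is the analysis that both requires and exploits the hypothesis $b_1(N)\ge2$.
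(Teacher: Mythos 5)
Your proposal follows essentially the same route as the paper's proof: the paper simply cites McMullen \cite{Mc02} for the inequality and for equality on fibered classes, then extends to quasi-fibered classes by continuity of the two norms, which is exactly what you do in your final step. The only difference is that you sketch McMullen's underlying argument (infinite cyclic cover, Mayer--Vietoris presentation over $\zt$, degree bounds, and the $(t-1)$-bookkeeping that uses $b_1(N)\ge 2$) rather than invoking it as a black box.
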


McMullen \cite{Mc02} proved inequality \eqref{equ:mcm},
and he also showed that \eqref{equ:mcm} is an equality for
fibered classes.  It now follows from the continuity of the
Alexander norm and the Thurston norm that \eqref{equ:mcm}
is an equality for quasi-fibered classes.

The following is now a well-known consequence of McMullen's theorem:

\begin{corollary}
\label{cor:mcmullen}
Let $N$  be a $3$-manifold with empty or toroidal boundary.
\bn
\item \label{thick1}
If $N$ fibers over the circle, and if the fiber has
negative Euler characteristic, then $\th(N)\geq 1$.
\item \label{thick2}
If $N$ has at least two non-equivalent fibered faces,
then $\th(N)\geq 2$.
\en
\end{corollary}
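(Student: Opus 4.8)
The plan is to deduce both statements from Theorem \ref{thm:mcmullen} together with the identification of $\th(N)$ with the dimension of the Newton polytope $\Newt(\Delta_N)$ (in the relevant cases, $\Delta_N\ne 0$, which we address below). The underlying principle is that the Alexander norm $\|-\|_A$ is, by definition, the support function of $\Newt(\Delta_N)$ (up to sign/symmetrization), so the dimension of $\Newt(\Delta_N)$ equals the dimension of the subspace of $H^1(N;\Q)$ on which $\|-\|_A$ is \emph{not} identically zero after quotienting by its own linear degeneracies; equivalently, $\th(N)$ is the maximal number of linearly independent classes $\phi$ on which $\|-\|_A$ grows. Theorem \ref{thm:mcmullen} lets us detect this growth geometrically: on a fibered face, $\|-\|_A=\|-\|_T$, and the Thurston norm is positive on the fibered cone (the fiber has negative Euler characteristic, so $\|\phi\|_T>0$ there).

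For part \eqref{thick1}: suppose $N$ fibers over $S^1$ with fiber $F$ of negative Euler characteristic, via $\phi_0\in H^1(N;\Z)$. First I would handle the degenerate case $b_1(N)=1$ separately: then $H$ has rank one, $\Delta_N\ne 0$ (a fibered $3$-manifold has nonzero Alexander polynomial — its Newton polytope has the same breadth as the Thurston norm predicts, which is $\|\phi_0\|_T=-\chi(F)>0$), so $\Newt(\Delta_N)$ is a segment of positive length, giving $\th(N)\ge 1$. For $b_1(N)\ge 2$, Theorem \ref{thm:mcmullen} applies: since $\phi_0$ is fibered it is certainly quasi-fibered, so $\|\phi_0\|_A=\|\phi_0\|_T=\chi_-(F)=-\chi(F)>0$. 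Hence the support function of $\Newt(\Delta_N)$ takes a nonzero value on $\phi_0$, forcing $\dim\Newt(\Delta_N)\ge 1$; by \eqref{eq:thx} (and the fact that for a fibered manifold the Alexander invariant, or at least its relevant piece, has the expected rank, so $\th(N)=\dim\Newt(\Delta_N)$), we get $\th(N)\ge 1$.

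For part \eqref{thick2}: $N$ has two non-equivalent fibered faces $F_1,F_2$ of the Thurston norm ball. Here $b_1(N)\ge 2$ automatically, since a single fibered face of a rank-one $H^1$ cannot have a non-equivalent companion (the two rays $\Q^+F_1$, $\Q^+F_2$ would then coincide up to sign). Pick integral fibered classes $\phi_1\in\Q^+F_1$, $\phi_2\in\Q^+F_2$. Since $F_1\ne\pm F_2$, the classes $\phi_1,\phi_2$ are linearly independent in $H^1(N;\Q)$: if $\phi_2=c\phi_1$ then they would lie on the same line through the origin, hence on equivalent faces. By Theorem \ref{thm:mcmullen} again, $\|-\|_A$ equals $\|-\|_T$ on each fibered cone $\Q^+F_i$, and the Thurston norm is \emph{linear} (not merely convex) on each top-dimensional face cone. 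The key point is then that these two linear functionals $\|-\|_T|_{\Q^+F_1}$ and $\|-\|_T|_{\Q^+F_2}$ are distinct linear functionals on $H^1(N;\Q)$ — indeed, distinct top-dimensional faces of a polytope are cut out by distinct supporting hyperplanes, so the corresponding linear functionals differ — and hence the set $\{g-h : a_g\ne 0,\ a_h\ne 0\}$, whose span defines $\th(N)$, cannot lie in a single hyperplane: if it did, $\|-\|_A$ would be a single linear functional (up to symmetrization) near the whole support, contradicting that it realizes two different linear functionals on $\Q^+F_1$ and $\Q^+F_2$. Therefore $\dim\op{span}\{g-h\}\ge 2$, i.e. $\th(N)\ge 2$.

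The main obstacle I anticipate is the bookkeeping around the rank of the Alexander invariant and the identification $\th(N)=\dim\Newt(\Delta_N)$: one must be sure $\Delta_N\ne 0$ in both situations (true for fibered manifolds) and that the torsion-module order which defines $\th$ agrees with $\Delta_N$ itself, using \eqref{equ:tu01} and \eqref{eq:thx}. A cleaner route, which I would actually write down, bypasses this: argue entirely at the level of the Alexander \emph{norm}. Since $\|-\|_A$ is the support function of the symmetrized Newton polytope $\Newt(\Delta_N)-\Newt(\Delta_N)$, one has $\dim\Newt(\Delta_N) = \dim H^1(N;\Q) - \dim\{\phi : \|\phi\|_A=0\text{ and }\|-\phi\|_A=0\}$ — more precisely $\th(N)$ equals the rank of the ``breadth'' functional, which is at least the number of linearly independent $\phi$ realizing strictly positive, genuinely different values via Theorem \ref{thm:mcmullen}. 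Casting the two parts in this language reduces everything to the two elementary linear-algebra facts used above (nonvanishing on one fibered class; two fibered faces giving two independent directions), and the whole proof becomes a few lines citing Theorem \ref{thm:mcmullen}, the structure theorem \eqref{th3} for fibered faces, and \eqref{eq:thx}.
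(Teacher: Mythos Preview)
Your approach is correct and essentially the same as the paper's: both deduce the corollary directly from McMullen's Theorem~\ref{thm:mcmullen} by exploiting the equality $\|\phi\|_A=\|\phi\|_T$ on fibered (hence quasi-fibered) classes. The paper packages the argument on the dual side, which is a bit cleaner than your support-function version: setting $C=\op{hull}\{h-g\in H\otimes\Q\mid a_g\ne 0,\ a_h\ne 0\}$, McMullen's inequality gives $C\subseteq\BT$ and the equality on fibered cones forces every fibered vertex of $\BT$ to lie in $C$; parts \eqref{thick1} and \eqref{thick2} then follow by counting linearly independent fibered vertices (one nonzero vertex, respectively two non-equivalent and hence independent vertices), which is exactly dual to your argument about distinct linear functionals. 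Your explicit handling of the case $b_1(N)=1$ and of the nonvanishing of $\Delta_N$ is a nice touch that the paper leaves implicit.
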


\begin{proof}
As usual, we write $H=H_1(N;\Z)/\tor$ and we
view $H$ as a subgroup of $H_1(N;\Q)=H\otimes \Q$.  We write
$\Delta_{N}=\sum_{h\in H}a_hh$, and we set
\begin{equation}
\label{eq:hull}
C:=\op{hull}\{h-g\in H\otimes \Q \mid \text{$a_g\ne 0$ and
$a_h\ne 0$}\}.
\end{equation}
Theorem \ref{thm:mcmullen} now says that $C$ is a subset
of $\BT$, and that any fibered vertex of $\BT$ also belongs to $C$.
The desired conclusions follow at once.
\end{proof}

\subsection{The RFRS property}
\label{section:akmw}

We conclude this section with two remarkable theorems that have
revolutionized our understanding of $3$-manifold groups.  First,
a definition which is due to Agol \cite{Ag08}.

\begin{definition}
\label{def:rfrs}
A group $\pi$ is called \emph{residually finite rationally solvable (RFRS)}\/
if there is a filtration of groups $\pi=\pi_0\supset \pi_1 \supset
\pi_2\supset \cdots $ such that the following conditions hold:
\bn
\item $\bigcap_i \pi_i=\{1\}$.
\item  For any $i$, the group $\pi_i$ is a normal, finite-index
subgroup of  $\pi$.
\item For any $i$, the map $\pi_i\to \pi_i/\pi_{i+1}$ factors
through $\pi_i\to H_1(\pi_i;\Z)/\tor$.
\en
\end{definition}

We say that a group $\pi$ is {\em virtually RFRS}\/ if $\pi$
admits a finite-index subgroup which is RFRS.  The following
theorem is due to Agol \cite{Ag08} (see also \cite{FK12}).

\begin{theorem}
\label{thm:ag08}
Let $N$ be an irreducible $3$-manifold such that $\pi_1(N)$
is virtually RFRS. Let $\phi\in H^1(N;\Q)$ be a non-fibered class.
There exists then a finite cover $p\colon N'\to N$ such that
$p^*(\phi)\in H^1(N';\Q)$ is quasi-fibered.
\end{theorem}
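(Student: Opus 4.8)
The plan is to reduce the statement, via Proposition~\ref{prop:tnfinitecover} and Corollary~\ref{cor:pull qf}, to the case where $\pi_1(N)$ itself is RFRS, and then to exploit the defining filtration of an RFRS group to ``unwind'' the given non-fibered class $\phi$ into a class that is quasi-fibered after pulling back. More precisely, if $\wti\pi\subset\pi_1(N)$ is a finite-index RFRS subgroup, corresponding to a finite cover $N_0\to N$, then Proposition~\ref{prop:tnfinitecover} says $p_0^*$ carries fibered cones into fibered cones and is a scaled isometry on $H^1$; hence $p_0^*(\phi)$ is still non-fibered, and by Corollary~\ref{cor:pull qf}\eqref{qf1} it suffices to produce a further finite cover $N'\to N_0$ over which $p_0^*(\phi)$ becomes quasi-fibered. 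So from now on one may assume $\pi_1(N)$ is RFRS with filtration $\pi_1(N)=\pi_0\supset\pi_1\supset\cdots$.

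The heart of the argument is an induction that pushes $\phi$ into the closure of a fibered cone one RFRS-layer at a time. The key geometric input is Gabai's theorem on sutured manifold hierarchies: a taut surface dual to $\phi$ can be built into a sutured manifold hierarchy, and one uses the RFRS condition to pass to a finite cover $N_i\to N$ (corresponding to $\pi_i$) in which the homology class of (a component of) that surface lies in the image of $H^1(N_i;\Z)\to H^1(\text{decomposition piece};\Z)$ in a controlled way. Concretely, after cutting $N$ along a taut surface $S$ representing $\phi$, one obtains a sutured manifold whose relevant cohomology is detected, after going to the cover $N_i$, by a class that is ``closer'' to being fibered; the third clause of the RFRS definition (that $\pi_i\to\pi_i/\pi_{i+1}$ factors through $H_1(\pi_i;\Z)/\tor$) is exactly what guarantees that the next cover in the tower sees the needed homology. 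Iterating along the hierarchy, whose length is finite by Gabai, one arrives at a finite cover $p\colon N'\to N$ in which $p^*(\phi)$ lies on the boundary of a fibered face of the Thurston norm ball of $N'$; that is, $p^*(\phi)$ is quasi-fibered.

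The main obstacle is the inductive step linking the RFRS filtration to the sutured manifold hierarchy: one must verify that cutting along the taut surface and passing to the cover $N_{i+1}$ genuinely decreases the relevant complexity (the ``depth'' of the hierarchy) while keeping track of the image of $\phi$ in the cohomology of the cut-open pieces, and that the limiting class is fibered rather than merely non-degenerate. This requires care with (i) how the Thurston norm behaves under the cut-and-cover operation, using the multiplicativity in Proposition~\ref{prop:tnfinitecover} applied to each piece, and (ii) ensuring the constructed surfaces in the tower are connected and norm-minimizing so that the limiting face is actually a face of $B_T(N')$. Since the paper attributes this theorem to Agol~\cite{Ag08} (with \cite{FK12} as an alternative reference), I would at this point simply invoke that argument rather than reproduce the hierarchy bookkeeping in detail.
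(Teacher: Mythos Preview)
The paper does not prove this theorem at all: it is stated as a result due to Agol~\cite{Ag08} (see also~\cite{FK12}), with no argument supplied in the text. Your proposal ends up in the same place---you explicitly say you would invoke Agol's argument---so in that sense your approach matches the paper's. The sketch you give of the RFRS filtration combined with Gabai's sutured-manifold hierarchy is a fair outline of how Agol's proof actually proceeds, and goes well beyond what the paper itself provides; but since the paper offers nothing to compare against beyond the citation, there is no divergence to report.
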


We can now also formulate the following theorem which is a
consequence of the work of  Agol \cite{Ag08,Ag12}, Liu \cite{Li11},
Wise \cite{Wi12a,Wi12b} and Przytycki--Wise \cite{PW11,PW12},
building on work of Kahn--Markovic \cite{KM12} and
Haglund--Wise \cite{HW08}.  We refer to \cite[Section~5]{AFW12}
for more background and details.

\begin{theorem}
\label{thm:akmwrfrs}
Let $N$ be an irreducible $3$-manifold with empty or toroidal boundary.
If $N$ is not a closed graph manifold, then $\pi_1(N)$ is virtually RFRS.
\end{theorem}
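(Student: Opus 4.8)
The plan is to reduce the theorem to a single soft implication — that a virtually (compact) special group is virtually RFRS — and then to dispatch the geometric types of $N$ one at a time by quoting the cubulation/specialness theorems of the references named in the statement.

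For the reduction: by Agol \cite{Ag08}, every right-angled Artin group is RFRS, a filtration as in Definition \ref{def:rfrs} being manufactured from the torsion-free abelianizations of the successive terms. The RFRS property is moreover inherited by an arbitrary subgroup $H\le G$: if $(G_i)_{i\ge 0}$ witnesses RFRS for $G$, then $(H\cap G_i)_{i\ge 0}$ witnesses it for $H$. Indeed, each $H\cap G_i$ is normal and of finite index in $H$, and $\bigcap_i(H\cap G_i)=H\cap\bigcap_i G_i=\{1\}$; finally, the homomorphism $H\cap G_i\to G_i/G_{i+1}$ obtained by restriction has image exactly $(H\cap G_i)/(H\cap G_{i+1})$ and factors through the torsion-free abelian group $H_1(G_i;\Z)/\tor$, hence also through $H_1(H\cap G_i;\Z)/\tor$, so the projection $H\cap G_i\to (H\cap G_i)/(H\cap G_{i+1})$ factors through $H_1(H\cap G_i;\Z)/\tor$, as required. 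By the Haglund–Wise embedding theorem \cite{HW08}, the fundamental group of a compact special cube complex embeds in a right-angled Artin group; combining the three facts, any group that is virtually compact special is virtually RFRS. It therefore suffices to prove that $\pi_1(N)$ is virtually compact special.

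That is where the deep input enters. Since $N$ is irreducible, has empty or toroidal boundary, and is not a closed graph manifold, either (i) $N$ is hyperbolic of finite volume; or (ii) $N$ has a non-trivial JSJ decomposition with at least one hyperbolic piece (the mixed case); or (iii) $N$ is a graph manifold — and then, not being a \emph{closed} graph manifold, $N$ has non-empty, toroidal boundary. In case (i), $\pi_1(N)$ is virtually compact special: for cusped $N$ this is Wise's theorem on hyperbolic groups with a quasiconvex hierarchy \cite{Wi12a,Wi12b}, and for closed $N$ it follows by combining the Kahn–Markovic surface subgroup theorem \cite{KM12}, Wise's machinery, and Agol's theorem \cite{Ag12}. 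In case (ii) it is the main theorem of Przytycki–Wise on mixed $3$-manifolds \cite{PW12}. In case (iii) it follows from the work of Przytycki–Wise \cite{PW11} (graph manifolds with non-empty boundary are virtually special) and, in related form, of Liu \cite{Li11}. Feeding the three cases into the reduction of the previous paragraph yields the theorem; a detailed exposition appears in \cite[\S5]{AFW12}.

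In truth there is no ``main obstacle'' internal to this argument: the substance is entirely imported, with the Kahn–Markovic theorem and Agol's theorem among the deepest recent results in the subject, and I would do no more than cite them — what remains is to organize the trichotomy and check the bookkeeping in the soft reduction. The one point genuinely worth care is the meaning of \emph{graph manifold} in case (iii): it must be taken to mean that every JSJ piece is Seifert fibered, so that Sol manifolds are graph manifolds and, in particular, every closed Sol manifold is a closed graph manifold. This is essential, since a closed Sol manifold has virtually polycyclic fundamental group that is not virtually RFRS — along any filtration as in Definition \ref{def:rfrs} the $\Z^2$ fiber subgroup of such a group is never killed, its torsion-free first homology having rank $1$ — so closed Sol manifolds must be, and are, excluded by the hypothesis of the theorem.
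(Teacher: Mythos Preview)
Your proposal is correct and follows the standard route. The paper itself does not supply a proof of this theorem; it merely states it as a consequence of the works of Agol, Liu, Wise, Przytycki--Wise, Kahn--Markovic, and Haglund--Wise, referring to \cite[\S5]{AFW12} for details---your sketch is precisely the argument those references combine to give (virtually compact special $\Rightarrow$ virtually RFRS, then the case-by-case cubulation/specialness results), so there is no meaningful difference in approach.
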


We now obtain the following corollary.

\begin{corollary}
\label{cor:vfibered}
Let $N$ be an irreducible $3$-manifold with empty or toroidal boundary.
If $N$  is not a closed graph manifold, then $N$ is virtually fibered.
\end{corollary}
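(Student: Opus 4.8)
The plan is to deduce Corollary~\ref{cor:vfibered} from Theorems~\ref{thm:ag08} and \ref{thm:akmwrfrs} together with Thurston's fibration theory recalled in \S\ref{subsection:thurston}. Since $N$ is irreducible with empty or toroidal boundary and is not a closed graph manifold, Theorem~\ref{thm:akmwrfrs} tells us that $\pi_1(N)$ is virtually RFRS, so Theorem~\ref{thm:ag08} becomes available. The goal is to produce a finite cover of $N$ that admits a fibration over $S^1$, i.e.\ that has a fibered class in $H^1$.

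The first step handles a trivial edge case: if $b_1(N)=0$, then $N$ is a rational homology sphere, hence a closed manifold; but a closed, irreducible, aspherical (or otherwise) $3$-manifold with $b_1=0$ that is not a graph manifold is covered by the statement only vacuously in the sense that we still need $b_1$ of a cover to be positive. In fact, virtually RFRS groups with infinite $\pi_1$ have finite-index subgroups with positive first Betti number (this is built into the RFRS filtration, since the maps $\pi_i\to\pi_i/\pi_{i+1}$ factor through $H_1(\pi_i;\Z)/\tor$, and the intersection of the $\pi_i$ is trivial, forcing some $\pi_i$ to have nontrivial free abelianization unless $\pi_1(N)$ is finite, which it is not as $N$ is irreducible and not $S^3$). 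So after passing to a finite cover we may assume $b_1(N)\ge 1$, and then pick any non-trivial class $\phi\in H^1(N;\Q)$.

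The second, main step: if $\phi$ is already fibered we are done; otherwise apply Theorem~\ref{thm:ag08} to the non-fibered class $\phi$ to obtain a finite cover $p\colon N'\to N$ with $p^*(\phi)$ quasi-fibered. By definition a quasi-fibered class lies on the boundary of a fibered cone of the Thurston norm ball of $N'$; in particular that fibered cone is non-empty, so it contains a fibered class, and hence $N'$ fibers over $S^1$. Since $N'$ is a finite cover of the original $N$, this proves $N$ is virtually fibered. Note one must also check that $N'$ is irreducible and has empty or toroidal boundary so that the Thurston-norm machinery applies — this holds because finite covers of irreducible $3$-manifolds with empty or toroidal boundary are again of this type (irreducibility of covers of irreducible $3$-manifolds, and the boundary of a cover is a cover of the boundary tori, hence a union of tori).

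The only real subtlety — and the step I would be most careful about — is the existence of a finite cover with $b_1\ge 1$, needed before we can even name a class $\phi$ to feed into Theorem~\ref{thm:ag08}. This is where virtual RFRS is doing genuine work: the RFRS condition guarantees, via condition~(3) of Definition~\ref{def:rfrs} and the fact that $\bigcap\pi_i=\{1\}$ with $\pi_1(N)$ infinite, that some term $\pi_i$ surjects onto a non-trivial free abelian group, giving a finite cover with positive first Betti number. Everything else is a direct unwinding of the definition of quasi-fibered together with Thurston's theorem that fibered classes fill up rational cones on the fibered faces. So the proof is short: reduce to positive Betti number via virtual RFRS, choose a class, make it quasi-fibered in a finite cover by Theorem~\ref{thm:ag08}, and observe that a quasi-fibered class certifies the existence of a nearby fibered class, hence a fibration, in that cover.
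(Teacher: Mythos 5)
Your proposal is correct and is essentially the paper's own argument: invoke Theorem~\ref{thm:akmwrfrs} to get virtual RFRS, pass to a finite cover corresponding to a non-trivial RFRS subgroup (which forces $b_1>0$, for exactly the reason you give — otherwise the RFRS filtration stabilizes and cannot intersect to the trivial group), and then feed a non-fibered class into Theorem~\ref{thm:ag08} to obtain a quasi-fibered class in a further finite cover, whence a nearby fibered class and a fibration. The only minor point you needn't belabor is the explicit "if $\phi$ is already fibered we are done" branching — the paper elides it — and the paper also records a parenthetical alternative for bounded graph manifolds via \cite{WY97}, which your argument does not need since you are in the non-graph case.
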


\begin{proof}
It follows from Theorem \ref{thm:akmwrfrs} that $\pi_1(N)$
is virtually RFRS.  Since $N$ is not a graph manifold we
know that $N$ is not spherical, i.e., $\pi_1(N)$ is not finite.
We therefore see that $\pi_1(N)$ admits a finite-index
non-trivial subgroup $\pi'$ which is RFRS.

Let us denote by $N'$ the corresponding finite cover of $N$.
Since $\pi'$ is RFRS and non-trivial, it follows that
$b_1(\pi')=b_1(N')>0$.  It now follows from Theorem \ref{thm:ag08}
that $N'$, and hence $N$, admits a finite cover $M$ which is fibered.
(If $N$ is a graph manifold with non-trivial boundary, then this also follows from \cite{WY97}.)
\end{proof}

We conclude this section with the following result, which
is also an immediate consequence of Theorem \ref{thm:ag08}
(see e.g.~\cite[Section~6]{AFW12} for details).

\begin{corollary}
\label{cor:big betti}
Let $N$ be an irreducible $3$-manifold with empty or toroidal
boundary.  Suppose $N$ is neither $S^1\times D^2$, nor
$S^1\times S^1\times I$, nor finitely cover by a torus bundle. Then, for
every $k\in \N$, there is a finite cover $N'\to N$ such that
$b_1(N')\ge k$.
\end{corollary}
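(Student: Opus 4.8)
The plan is to combine Theorem \ref{thm:akmwrfrs}, which hands us virtual RFRS-ness, with Theorem \ref{thm:ag08}, which produces quasi-fibered classes in finite covers, and then to run a standard ``build up Betti numbers inside a fibered cone'' argument. First I would dispose of the easy case: if $N$ is a closed graph manifold, then by hypothesis it is not finitely covered by a torus bundle (and a closed graph manifold that is not such a bundle is covered by a graph manifold with a Thurston cone of positive dimension on a non-fibered direction; alternatively, one uses Luo--Wang--Yu / Wang--Yu type statements, or simply cites that closed aspherical graph manifolds not covered by torus bundles have virtually infinite $b_1$). The main line of argument, though, is for $N$ irreducible with $\pi_1(N)$ virtually RFRS, which is the generic situation once we have excluded the trivial exceptions $S^1\times D^2$ and $S^1\times S^1\times I$.

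Here is the core step. Pass to a finite cover so that $\pi=\pi_1(N)$ is RFRS; since $N$ is not covered by a torus bundle, and $N \neq S^1\times D^2, S^1\times S^1\times I$, the manifold $N$ is not finitely covered by a manifold with $b_1$ permanently bounded, so after passing to a further cover we may assume $b_1(N)\ge 1$ and moreover that $N$ has a non-fibered class $\phi\in H^1(N;\Q)$ (if $b_1(N)=1$ and the unique direction is fibered, take a further RFRS cover; if every cover were fibered in every direction with bounded $b_1$ we would be in the excluded torus-bundle case). Apply Theorem \ref{thm:ag08} to $\phi$: there is a finite cover $p\colon N'\to N$ with $p^*(\phi)$ quasi-fibered, so $p^*(\phi)$ lies on the closure of a fibered face $F$ of $B_T(N')$. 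Now the fibered face $F$ is an open top-dimensional face of the Thurston norm ball, hence has dimension $b_1(N')-1$; the fibration associated to an interior class of $F$ has fiber a surface $S$ whose genus grows in cyclic covers. Taking the cover of $N'$ dual to $\ker(\pi_1(N')\to H_1 \to \Z/m)$ for large $m$, inside the fibered cone, multiplies $b_1$ of the fiber (essentially $-\chi(S)$), and since the new manifold is again a surface bundle we gain $b_1$; iterating and using Theorem \ref{thm:ag08} again to re-enter a fibered cone after each step, one drives $b_1$ above any prescribed $k$. Finally, composing all these covers with the original cover from $N$ gives the desired $N'\to N$ with $b_1(N')\ge k$.

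The one genuinely delicate point — and where I would be most careful — is verifying that we never get trapped: that at each stage the current cover either already has $b_1\ge k$, or admits a non-fibered class to feed into Theorem \ref{thm:ag08}, or admits a fibered class whose associated fiber has $-\chi>0$ so that cyclic covers strictly increase $b_1$. The obstruction to all three is exactly that the cover is (virtually) $S^1\times D^2$, $S^1\times S^1\times I$, or a torus bundle — manifolds whose fibers are tori or disks/annuli and whose Betti numbers stay bounded in all finite covers. Since RFRS-ness and irreducibility are inherited by finite covers, and we have excluded these cases by hypothesis on $N$, the dichotomy closes up. I would also remark that this is precisely the content extracted in \cite[Section~6]{AFW12}, so in the paper one can afford to be brief and point there for the bookkeeping.

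\begin{proof}
See \cite[Section~6]{AFW12}; the argument combines Theorems \ref{thm:ag08} and \ref{thm:akmwrfrs} with the observation that inside a fibered cone, cyclic covers dual to classes in the cone have surface fibers of strictly increasing negative Euler characteristic, hence strictly increasing first Betti number, unless the manifold is virtually $S^1\times D^2$, $S^1\times S^1\times I$, or a torus bundle --- which are excluded by hypothesis. Iterating, and re-applying Theorem \ref{thm:ag08} to re-enter a fibered cone as needed, yields finite covers with arbitrarily large $b_1$.
\end{proof}
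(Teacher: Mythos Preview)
The paper's own treatment of this corollary is a single sentence: it is declared an immediate consequence of Theorem~\ref{thm:ag08}, with a pointer to \cite[Section~6]{AFW12} for details. Your displayed \texttt{proof} environment does exactly the same, so at that level you match the paper.

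The surrounding sketch, however, contains a genuine gap. Your proposed mechanism for increasing $b_1$---take cyclic covers ``inside the fibered cone'' so that the fiber's Euler characteristic grows, ``hence strictly increasing first Betti number''---does not work. For a surface bundle $S\hookrightarrow M\to S^1$ with monodromy $\varphi$, the Wang sequence gives
\[
b_1(M)=1+\dim_{\Q} H_1(S;\Q)^{\varphi_*},
\]
so $b_1(M)$ is governed by the monodromy-invariant homology of the fiber, not by $-\chi(S)$ or $b_1(S)$. A cyclic cover taken along the fibering class leaves the fiber unchanged (only the monodromy becomes $\varphi^m$), while a cyclic cover along a transverse class---which already presupposes $b_1\ge 2$---does enlarge the fiber but gives you no control over the invariant subspace for the new monodromy. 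Concretely, there are pseudo-Anosov surface bundles with $b_1=1$ all of whose finite cyclic covers still have $b_1=1$: choose $\varphi$ so that $\varphi_*$ has no root of unity as an eigenvalue on $H_1(S;\Q)$. So cyclic covers alone cannot drive $b_1$ up.

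What actually produces the growth of $b_1$ in \cite[Section~6]{AFW12} is the RFRS tower itself, not an auxiliary cyclic-cover trick. For instance, in the $b_1=1$ fibered case: since $\pi_i\to\pi_i/\pi_{i+1}$ factors through $H_1(\pi_i;\Z)/\tor\cong\Z$, each $\pi_{i+1}$ contains the fiber subgroup $\ker(\pi_i\to\Z)=\pi_1(S)$; if $b_1(\pi_i)=1$ persisted for all $i$ one would get $\pi_1(S)\subset\bigcap_i\pi_i$, contradicting $\bigcap_i\pi_i=\{1\}$. An elaboration of this idea forces $b_1$ to grow without bound. That is a different (and necessary) mechanism from the one in your sketch; the reference you give is correct, but the heuristic you wrap around it is not.
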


\section{K\"ahler $3$-manifold groups}
\label{sect:kahler3m}

After these preparations, we are now ready to prove
Theorem \ref{mainthmk}.  For the reader's convenience,
we first recall the statement of that theorem.

\begin{theorem}
\label{thm:3k}
Let $N$ be a $3$-manifold with non-empty, toroidal boundary.
If $\pi_1(N)$ is a K\"ahler group, then $N\cong S^1\times S^1\times I$.
\end{theorem}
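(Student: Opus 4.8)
The plan is to combine the constraints on the Alexander polynomial of a K\"ahler group (Theorem~\ref{thm:deltakahler}) with the lower bounds on thickness coming from fiberedness (Corollary~\ref{cor:mcmullen}), passing to finite covers to make both sides bite. First I would reduce to the prime case: if $\pi_1(N)$ is K\"ahler then $\pi_1(N)$ cannot split as a non-trivial free product (Gromov--ABR), so by the Kneser--Milnor decomposition $N$ is prime; moreover a toroidal boundary component of a prime $3$-manifold forces $N$ to be irreducible (the only prime, non-irreducible $3$-manifold is $S^1\times S^2$, which has empty boundary). So $N$ is irreducible with non-empty toroidal boundary. I would dispose of the trivial small cases directly: $N=S^1\times D^2$ has $\pi_1=\Z$, which has odd first Betti number, hence is not K\"ahler; and if $N$ is finitely covered by a torus bundle with non-empty boundary, the only possibility compatible with toroidal boundary is $N$ itself being $S^1\times S^1\times I$ (the sole torus-semibundle with boundary), which is the desired conclusion.

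Next, assuming $N$ is none of these exceptional manifolds, I would invoke Corollary~\ref{cor:big betti}: there is a finite cover $N'\to N$ with $b_1(N')$ as large as we like, in particular $b_1(N')\ge 2$. By Lemma~\ref{lem:kahlersubgroup}, $\pi_1(N')$ is again a K\"ahler group, so Theorem~\ref{thm:deltakahler} gives $\th(N')=0$, i.e. $\Delta_{N'}$ has Newton polytope a point. On the other hand, $N'$ is irreducible with non-empty toroidal boundary and $b_1(N')\ge 2$, so $\chi(N')=0$ and every properly embedded surface dual to a primitive class has a component of negative Euler characteristic unless it is a union of tori/annuli; more to the point, since $N'$ is not a graph manifold (if $N$ is not itself a graph manifold) it is virtually fibered with fiber of negative Euler characteristic by Corollary~\ref{cor:vfibered}, and after a further finite cover I get a genuinely fibered $N''$ over $S^1$ with fiber of negative Euler characteristic, whence $\th(N'')\ge 1$ by Corollary~\ref{cor:mcmullen}\eqref{thick1}. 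Since $\pi_1(N'')$ is still K\"ahler, this contradicts Theorem~\ref{thm:deltakahler}. Therefore $N$ must be a graph manifold.

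It remains to rule out graph manifolds with toroidal boundary other than $S^1\times S^1\times I$. Here the cleanest route is again via Betti numbers: an irreducible graph manifold with non-empty toroidal boundary which is not $S^1\times S^1\times I$ has a Seifert piece whose base orbifold has a boundary component or positive genus, and such a manifold has a finite cover that fibers over $S^1$ with fiber of negative Euler characteristic (this is the graph-manifold case of virtual fibering, \cite{WY97}), so Corollary~\ref{cor:mcmullen}\eqref{thick1} again forces $\th\ge 1$ in a K\"ahler finite cover, a contradiction. Thus the only surviving possibility is $N\cong S^1\times S^1\times I$, and conversely $\Z^2$ is K\"ahler (realized by an elliptic curve), so the classification is complete.

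The main obstacle I anticipate is not any single deep input but the bookkeeping around \emph{which} manifolds escape Corollary~\ref{cor:big betti} and Corollary~\ref{cor:vfibered}: I must check carefully that among irreducible $3$-manifolds with non-empty toroidal boundary, the ones that are either $S^1\times D^2$, or finitely covered by a torus bundle, or do not virtually fiber with hyperbolic-type fiber, reduce exactly to $S^1\times D^2$ and $S^1\times S^1\times I$, and that $S^1\times D^2$ is excluded by parity of $b_1$. The risk is an overlooked graph-manifold configuration with toroidal boundary that is neither $S^1\times S^1\times I$ nor virtually fibered with negative Euler characteristic fiber; I would handle this by appealing to the structure theory of graph manifolds with boundary, where every such manifold other than $S^1\times S^1\times I$ has a piece forcing a hyperbolic-type surface in some finite cover.
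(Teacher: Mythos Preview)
Your proposal is correct in outline, but it takes a detour the paper avoids. The paper's key observation is that since $N$ has \emph{non-empty} boundary, it is automatically not a \emph{closed} graph manifold, so Corollary~\ref{cor:vfibered} applies directly without any case split: $N$ has a finite fibered cover $M$, full stop. The paper then does a three-line case analysis on the Euler characteristic of the fiber $F$ (which is a surface with boundary): $\chi(F)=1$ gives $M=S^1\times D^2$ (odd $b_1$, excluded); $\chi(F)=0$ gives $M=S^1\times S^1\times I$, whence $N$ is either that or the twisted $I$-bundle over the Klein bottle (the latter has $b_1=1$, excluded); $\chi(F)<0$ gives $\th(M)\ge 1$ by Corollary~\ref{cor:mcmullen}, contradicting Theorem~\ref{thm:deltakahler}. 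Your split into ``not a graph manifold'' versus ``graph manifold with boundary'', and the separate appeal to \cite{WY97} and to Corollary~\ref{cor:big betti}, are all unnecessary once you notice that Corollary~\ref{cor:vfibered} already covers graph manifolds with boundary.

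There are also two small bookkeeping points you should tighten. First, the twisted $I$-bundle over the Klein bottle is not among your listed ``small cases'' (it is neither $S^1\times D^2$ nor $S^1\times S^1\times I$ nor finitely covered by a torus bundle), yet every finite cover of it has $b_1\le 2$, so your invocation of Corollary~\ref{cor:big betti} on it is problematic; of course it has $b_1=1$ and is immediately excluded, but you should say so. Second, when you pass to a fibered cover you assert the fiber has negative Euler characteristic; this requires exactly the case analysis the paper performs, since Corollary~\ref{cor:vfibered} gives no information about the fiber. Neither issue is fatal, but both are handled transparently by the paper's shorter argument.
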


\begin{proof}
Let $N$ be a $3$-manifold such that $\partial N$ is a non-empty
collection of tori, and assume $\pi_1(N)$ is a K\"ahler group.
As we pointed out in \S\ref{sect:kahler}, the group $\pi_1(N)$
cannot be the free product of two non-trivial groups; thus,
$N$ has to be a prime $3$-manifold.  Since $N$ has non-empty
boundary, we conclude that $N$ is in fact irreducible.

It now follows from Corollary \ref{cor:vfibered} (again using
the assumption that $N$ is not closed) that $N$ admits
a finite cover $M$ which is fibered. By Lemma \ref{lem:kahlersubgroup},
the group $\pi_1(M)$ is also a K\"ahler group. Note that the
fiber $F$ of the fibration $M\to S^1$ is a surface with boundary.
There are three cases to consider.

If $\chi(F)=1$, then $M=S^1\times D^2$.  But this is not
possible, since, as we pointed out in \S\ref{sect:kahler},
the first Betti number of a K\"ahler group is even.

If $\chi(F)=0$, then $M=S^1\times S^1\times [0,1]$,
and either $N=M$, or $N$ is the twisted $I$-bundle over the
Klein bottle.  In the latter case, $b_1(N)=1$, again contradicting
the assumption that $\pi_1(N)$ is a K\"ahler group.

Finally, if $\chi(F)<0$, then it follows from Corollary \ref{cor:mcmullen}
that $\th(M)\geq 1$. But this is not possible, since, by
Theorem \ref{thm:deltakahler}, we must have $\th(\pi_1(M))=0$.
\end{proof}

\begin{remark}
\label{rem:alt proof}
Surely there are other ways to prove this theorem.  Let us briefly
sketch an alternate approach, relying in part on some machinery
outside the scope of this paper.

It follows from the work of Agol, Wise and Przytycki--Wise
that the fundamental group of an irreducible
$3$-manifold $N$ with non-empty boundary admits a
finite-index subgroup which is a subgroup of a Coxeter
group (see \cite{AFW12} for details).  Combining this
with \cite[Theorem~A]{Py12} and \cite[Theorem~A]{BHMS02},
one can show that if $\pi_1(N)$ is a K\"ahler group, then either
$\pi$ is finite, or $\pi$ admits a finite-index subgroup which is a
non-trivial direct product of free groups (possibly infinite cyclic).
Hence, by the discussion in \S\ref{sect:kahler}, we must have
$\pi_1(N)=\Z^2$, and thus $N=S^1\times S^1\times I$.
\end{remark}

\section{Quasi--projective $3$-manifold groups}
\label{sect:qp 3m groups}

In this section, we prove Theorem \ref{mainthmqk} from the
Introduction. We will do that in several steps.

\subsection{Fibered faces in finite covers}
\label{subsec:fib fin}
We start out with the following proposition.

\begin{proposition}
\label{prop:manyfaces}
Let $N$ be an irreducible $3$-manifold with empty or toroidal boundary
which is not a graph manifold.  Then given any $k\in \N$, there exists a
finite cover $M\to N$ such that the Thurston norm ball of $M$ has at
least $k$ non-equivalent faces.
\end{proposition}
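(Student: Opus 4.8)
The plan is to produce, for any given $k$, a finite cover $M\to N$ in which the Thurston norm ball has at least $k$ non-equivalent top-dimensional faces; in fact I will arrange for many non-equivalent \emph{fibered} faces, since those are the ones we can detect homologically. The engine is Theorem \ref{thm:akmwrfrs} together with Theorem \ref{thm:ag08}: since $N$ is irreducible with empty or toroidal boundary and is not a closed graph manifold, the group $\pi_1(N)$ is virtually RFRS, and by Corollary \ref{cor:vfibered} it is virtually fibered. So after passing to a finite cover we may assume $N$ itself is fibered and RFRS, hence has a fibered face $F_1$ of its Thurston norm ball. The point is that fibered faces remain fibered and stay non-equivalent in covers, by Proposition \ref{prop:tnfinitecover} and Corollary \ref{cor:pull qf}\eqref{qf2}, so it suffices to \emph{create new} fibered faces one at a time.

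The key step is the following mechanism for producing a new face. Suppose $M$ is a finite cover of $N$ (still irreducible, empty or toroidal boundary, not a graph manifold, hence virtually RFRS) whose Thurston norm ball $B_T(M)$ has some collection of fibered faces. Since $N$ is not a graph manifold, it is in particular not $S^1\times D^2$, not $S^1\times S^1\times I$, and not finitely covered by a torus bundle (a torus bundle is a graph manifold, and $S^1\times D^2$, $S^1\times S^1\times I$ are graph manifolds, so any such cover would force $N$ to be a graph manifold as well). Therefore Corollary \ref{cor:big betti} applies: there are finite covers with arbitrarily large first Betti number. Choose a finite cover $M'\to M$ with $b_1(M')$ large enough that $H^1(M';\Q)$ contains a non-fibered class $\phi$ whose span is not contained in the union of the cones over the (finitely many) faces that are pulled back from $M$'s already-constructed fibered faces — this is possible because those pulled-back fibered cones live in the fixed-dimensional image $p^*H^1(M;\Q)$, which is a proper subspace once $b_1(M') > b_1(M)$. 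Now apply Agol's Theorem \ref{thm:ag08} to the non-fibered class $\phi$: there is a further finite cover $M''\to M'$ in which $p^*(\phi)$ becomes quasi-fibered, i.e.\ lies on the boundary of a fibered cone. That fibered cone is a genuinely new fibered face of $B_T(M'')$: it cannot be (a pullback of) one of the old ones, because those old faces pull back to faces lying in $p^*H^1(M;\Q)$, whereas $\phi$ — and hence every class on the face containing $p^*(\phi)$ — was chosen to escape that subspace. Iterating this construction $k$ times produces a finite cover of $N$ with at least $k$ pairwise non-equivalent fibered faces, using Corollary \ref{cor:pull qf}\eqref{qf2} at each stage to guarantee that faces which were non-equivalent in an intermediate cover remain non-equivalent after pulling back further.

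The main obstacle, and the step that needs the most care, is the genericity/escape argument: one must verify that in a cover with strictly larger $b_1$ one can always find a non-fibered class $\phi$ whose Agol-cover fibered face is not merely a reshuffling of existing ones. The cleanest way to do this is dimension counting — the pulled-back old fibered cones sweep out at most the subspace $p^*H^1(M;\Q)$, of dimension $b_1(M) < b_1(M')$, while the complement of the (finitely many) top-dimensional fibered cones of $B_T(M')$ together with that subspace is a nonempty open set, which must contain non-fibered classes unless $M'$ fibers in every direction; but if $B_T(M')$ had all of $H^1(M';\Q)$ as fibered cones we would already be done since $b_1(M')$ can be taken larger than any fixed bound and the number of top-dimensional faces of a norm ball in $\R^{b_1(M')}$ is at least something growing with $b_1$. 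One also has to keep bookkeeping of the hypothesis "not a graph manifold" passing to covers, which holds because a finite cover of a non-graph-manifold irreducible $3$-manifold is again not a graph manifold (graph manifolds are closed under passing to and from finite covers among irreducible manifolds). Once these points are pinned down, the induction runs without further difficulty.
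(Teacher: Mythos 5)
Your approach is genuinely different from the paper's, and unfortunately it has a gap at exactly the step you flagged as needing care. The paper's proof is much shorter and splits into cases: if $N$ is hyperbolic, it uses Corollary \ref{cor:big betti} to get covers with large $b_1$ and then observes that the Thurston norm of a hyperbolic manifold is non-degenerate, so the norm ball is a bounded, full-dimensional, symmetric polytope in $\R^{b_1}$ and hence has at least $b_1$ non-equivalent facets (the facet normals must span and come in antipodal pairs). For the non-hyperbolic, non-graph-manifold case it refers to \cite[Section~8.4]{AFW12}. Note in particular that the paper establishes Proposition~\ref{prop:manyfaces} (many faces) first and only then proves Theorem~\ref{thm:manyfiberedfaces} (many \emph{fibered} faces) by choosing a class on each of $k$ inequivalent faces, applying Theorem~\ref{thm:ag08} to each, and intersecting the covers; your proposal tries to do both at once, bootstrapping new fibered faces from nothing, which is where the trouble appears.

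The gap is in the claim that the Agol cover $M''\to M'$ produces a \emph{new} fibered face. You argue that the face of $B_T(M'')$ containing $p'^*(\phi)$ cannot coincide with a face containing pullbacks of old faces ``because those old faces pull back to faces lying in $p^*H^1(M;\Q)$, whereas $\phi$~--- and hence every class on the face containing $p^*(\phi)$~--- was chosen to escape that subspace.'' That reasoning is not correct: what pulls back into the subspace $p^*H^1(M;\Q)\subset H^1(M';\Q)$ (and further into $(p\circ p')^*H^1(M;\Q)\subset H^1(M'';\Q)$) are the \emph{images} of the old faces, which are only $b_1(M)$-dimensional sets; the faces of $B_T(M'')$ that contain them are top-dimensional subsets of the much larger $H^1(M'';\Q)$ and are certainly not contained in the pullback subspace. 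So a top-dimensional face of $B_T(M'')$ can easily contain both $p'^*(\phi)$ (which escapes the subspace) and the pullback of one of the old $\phi_i$. Moreover $\phi$ being non-fibered and outside the old cones does not prevent $\phi$ from lying on the \emph{boundary} of one of those cones, since fibered cones are open, so the quasi-fibered class in $M''$ can land right back on an old face. A second, smaller issue is your fallback ``unless $M'$ fibers in every direction'' paragraph, which implicitly invokes the large-$b_1$-plus-nondegeneracy argument without establishing nondegeneracy of the Thurston norm; that is fine when $N$ is hyperbolic, but not in general, and the paper has to cite \cite{AFW12} precisely to handle the non-hyperbolic, degenerate-norm case.
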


\begin{proof}
If $N$ is hyperbolic, then we know from
Corollary \ref{cor:big betti} that
 $N$  admits finite covers with arbitrarily large Betti numbers.
The proposition is then an immediate consequence of the fact that
the Thurston norm for a hyperbolic $3$-manifold is non-degenerate.
We refer to \cite[Section~8.4]{AFW12} for a proof of the proposition
in the non-hyperbolic case.
\end{proof}

The following is now a well-known consequence of
Proposition \ref{prop:manyfaces} and Theorem \ref{thm:ag08}.

\begin{theorem}
\label{thm:manyfiberedfaces}
Let $N$ be an irreducible $3$-manifold with empty or toroidal boundary
which is not a graph manifold.  Then given any $k\in \N$, there exists
a finite cover $N'\to N$ such that the Thurston norm ball of $N'$ has
at least $k$ non-equivalent fibered faces.
\end{theorem}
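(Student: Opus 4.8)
The plan is to combine Proposition \ref{prop:manyfaces} (which produces covers with many non-equivalent Thurston faces) with Theorem \ref{thm:ag08} (which promotes non-fibered classes to fibered ones after passing to a further cover), while keeping track of non-equivalence under pullbacks via Corollary \ref{cor:pull qf}\eqref{qf2}. First I would apply Proposition \ref{prop:manyfaces} to obtain a finite cover $M\to N$ whose Thurston norm ball $B_T(M)$ has at least $k$ pairwise non-equivalent top-dimensional faces $F_1,\dots,F_k$. Since $N$ is irreducible with empty or toroidal boundary and is not a graph manifold, Theorem \ref{thm:akmwrfrs} guarantees that $\pi_1(N)$ — and hence $\pi_1(M)$ — is virtually RFRS; after replacing $M$ by a further finite cover (using Corollary \ref{cor:pull qf}\eqref{qf2} to ensure the pulled-back faces remain non-equivalent, since $p^*$ is, up to scale, an isometry of Thurston norms by Proposition \ref{prop:tnfinitecover}), we may assume $\pi_1(M)$ itself is RFRS.

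Next I would make each $F_i$ fibered, one at a time. Pick a class $\phi_i\in H^1(M;\Q)$ lying in the open cone $\Q^+ F_i$. If $\phi_i$ is already fibered, do nothing; otherwise, by Theorem \ref{thm:ag08} there is a finite cover $p\colon M'\to M$ such that $p^*(\phi_i)$ is quasi-fibered, i.e. lies on the boundary of a fibered cone of $B_T(M')$. The subtle point is that quasi-fibered is weaker than fibered: $p^*(\phi_i)$ sits on the \emph{closure} of a fibered face, not necessarily in its interior. To fix this, I would perturb: the fibered cone adjacent to $p^*(\phi_i)$ corresponds to some top-dimensional face $G_i$ of $B_T(M')$, and since $p^*(\Q^+ F_i)$ is a cone on a top-dimensional face of $B_T(M')$ (again by Proposition \ref{prop:tnfinitecover}, $p^*$ carries top-dimensional faces to top-dimensional faces up to scale) that has $p^*(\phi_i)$ in its relative interior, and $p^*(\phi_i)$ lies on $\ol{G_i}$, the two faces must coincide: $p^*(\Q^+ F_i) = \Q^+ G_i$. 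Hence $p^*(\phi_i)$ actually lies in the open fibered cone $\Q^+ G_i$, so $p^*(F_i)$ is fibered.

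Iterating this over $i=1,\dots,k$ and composing all the covers, I obtain a single finite cover $N'\to N$ in which the pullbacks of $F_1,\dots,F_k$ are all fibered faces of $B_T(N')$; by Corollary \ref{cor:pull qf}\eqref{qf2} they remain pairwise non-equivalent, so $B_T(N')$ has at least $k$ non-equivalent fibered faces. The main obstacle is the bookkeeping in the previous paragraph: one must be careful that a quasi-fibered class lying in the relative interior of a face forces that whole face to be fibered, which rests on the fact (from Thurston's theory, item \eqref{th3}) that the fibered classes form a union of \emph{open} top-dimensional cones, so the face containing a quasi-fibered interior point cannot be partially fibered. Once that local-to-global step is in hand, everything else is a routine composition of finite covers together with the isometry and non-equivalence statements already recorded in Proposition \ref{prop:tnfinitecover} and Corollary \ref{cor:pull qf}.
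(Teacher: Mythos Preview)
Your overall strategy matches the paper's: first invoke Proposition~\ref{prop:manyfaces} to get a cover with $k$ inequivalent faces, then apply Agol's Theorem~\ref{thm:ag08} to each chosen class, and finally use Corollary~\ref{cor:pull qf} to keep the faces inequivalent. The paper takes the common cover corresponding to $\bigcap_i \pi_1(\wti{N}_i)$ rather than iterating, but that is a cosmetic difference.

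There is, however, a real gap in your ``quasi-fibered $\Rightarrow$ fibered'' step. You assert that ``$p^*$ carries top-dimensional faces to top-dimensional faces up to scale'' and hence that $p^*(\Q^+ F_i)=\Q^+ G_i$. This is false in general: $p^*(\Q^+ F_i)$ sits inside the subspace $\im(p^*)\subset H^1(M';\Q)$, which has dimension $b_1(M)$, while a top-dimensional cone of $B_T(M')$ has dimension $b_1(M')$. Since passing to the Agol cover typically \emph{increases} $b_1$, the image $p^*(\Q^+ F_i)$ will usually be a lower-dimensional slice, and $p^*(\phi_i)$ may well land on a proper face of $B_T(M')$ rather than in the relative interior of a top-dimensional one. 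So you cannot conclude that $p^*(\phi_i)$ is fibered, nor that ``$p^*(F_i)$ is a fibered face''.

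The paper sidesteps this by never upgrading quasi-fibered to fibered. It only uses that each $p^*(\phi_i)$ lies on the \emph{closure} of some fibered face $G_i$, and then argues that the $G_i$ are pairwise inequivalent. This last point is what Corollary~\ref{cor:pull qf}\eqref{qf2} is really delivering: if $G_i=\pm G_j$, then the supporting hyperplane of $\bar{G_i}$ in $H^1(M';\Q)$ pulls back (via the scaled isometry $p^*$) to a supporting hyperplane of $B_T(M)$ whose contact face contains $\phi_i/\|\phi_i\|_T$ in its relative interior and also contains $\pm\phi_j/\|\phi_j\|_T$; since $F_i$ is top-dimensional this forces $F_i=\pm F_j$, a contradiction. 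Thus one obtains $k$ inequivalent fibered faces without ever needing $p^*(\phi_i)$ itself to be fibered. Replace your perturbation paragraph with this argument and the proof goes through.
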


The proof is basically identical with the proof of \cite[Theorem~7.2]{Ag08}.
We provide a quick outline of the proof for the reader's convenience.

\begin{proof}
We pick classes $\phi_1,\dots,\phi_k$ in $H^1(N;\Q)$ which lie in $k$
inequivalent faces.
Note that $\pi_1(N)$ is virtually RFRS by Theorem \ref{thm:akmwrfrs}.
For each $i=1,\dots,k$ we can therefore apply Theorem \ref{thm:ag08}
to the class $\phi_i$, obtaining a finite cover $\wti{N}_i\to N$ such
that the pull-back of $\phi_i$ is quasi-fibered.

We now denote by $p\colon M\to N$ the cover corresponding to
the subgroup $\bigcap_{i=1}^{k} \pi_1(\wti{N}_i)$.  By Corollary \ref{cor:pull qf},
the cohomology classes $p^*(\phi_1),\dots, p^*(\phi_k)$ lie on closures of
inequivalent fibered faces of $M$.  Hence, $M$ has at least $k$
inequivalent fibered faces.
\end{proof}

\subsection{Irreducible $3$-manifolds which are not graph manifolds}
\label{subsec:not gm}

Next, we upgrade the statement about the Thurston unit ball
from  Theorem \ref{thm:manyfiberedfaces} to a statement about
the thickness of the Alexander ball.

\begin{theorem}
\label{thm:fatalexander}
Let $N$ be an irreducible $3$-manifold with empty or
toroidal boundary, and suppose $N$ is not a graph manifold.
There exists then a finite cover $N'\to N$ with
$\th(N')\geq 2$ and $b_1(N')\geq 3$.
\end{theorem}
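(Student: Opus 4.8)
The plan is to combine Theorem \ref{thm:manyfiberedfaces} with Corollary \ref{cor:mcmullen}\eqref{thick2}, using Corollary \ref{cor:big betti} to arrange the Betti number bound at the same time. Concretely, since $N$ is irreducible with empty or toroidal boundary and is not a graph manifold, it is in particular not one of the exceptional manifolds $S^1\times D^2$, $S^1\times S^1\times I$, nor finitely covered by a torus bundle; hence Corollary \ref{cor:big betti} applies. First I would apply Theorem \ref{thm:manyfiberedfaces} with $k=2$ to produce a finite cover $N_1\to N$ whose Thurston norm ball has at least two non-equivalent fibered faces. Any manifold $M$ finitely covering $N_1$ then also has at least two non-equivalent fibered faces: by Proposition \ref{prop:tnfinitecover} the pull-back map is a scaled isometry carrying fibered cones into fibered cones, and by Corollary \ref{cor:pull qf}\eqref{qf2} pull-backs of inequivalent faces remain inequivalent, so the property of having at least two non-equivalent fibered faces is inherited by all further finite covers.

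Next I would ensure the Betti number condition. Note $N_1$ is still irreducible with empty or toroidal boundary, is still not a graph manifold (a finite cover of a non-graph-manifold irreducible $3$-manifold is not a graph manifold — or one can simply note $N_1$ is not one of the exceptional manifolds of Corollary \ref{cor:big betti} since it has $\ge 2$ fibered faces, whereas $S^1\times D^2$, $S^1\times S^1\times I$ and torus-bundle-covered manifolds do not). Applying Corollary \ref{cor:big betti} to $N_1$ with $k=3$ yields a finite cover $N'\to N_1$ with $b_1(N')\ge 3$. Composing, $N'\to N$ is a finite cover. By the previous paragraph, $N'$ still has at least two non-equivalent fibered faces.

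Finally I would invoke Corollary \ref{cor:mcmullen}\eqref{thick2}: since $N'$ has at least two non-equivalent fibered faces (and $b_1(N')\ge 3\ge 2$, so McMullen's theorem, Theorem \ref{thm:mcmullen}, applies), we conclude $\th(N')\ge 2$. Together with $b_1(N')\ge 3$ this is the desired conclusion.

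I expect the only real subtlety to be the bookkeeping around which properties pass to finite covers and checking that the hypotheses of Corollary \ref{cor:big betti} are genuinely met after passing to $N_1$ — i.e., that $N_1$ is not finitely covered by a torus bundle. This follows because $N_1$ has at least two inequivalent fibered faces while a torus bundle (or anything finitely covered by one) has $b_1\le 3$ with a degenerate Thurston norm, hence cannot have two inequivalent fibered faces; alternatively, torus bundles are graph manifolds, and a finite cover of $N_1$ being a torus bundle would force $N_1$, hence $N$, to be a graph manifold, contrary to hypothesis. Apart from that, the argument is a straightforward chaining of the cited results.
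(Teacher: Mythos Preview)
Your proposal is correct and uses exactly the same three ingredients as the paper: Corollary~\ref{cor:big betti}, Theorem~\ref{thm:manyfiberedfaces}, and Corollary~\ref{cor:mcmullen}\eqref{thick2}. The only difference is the order in which you apply the first two. The paper first passes to a cover with $b_1\ge 3$ (so one may ``without loss of generality assume $b_1(N)\ge 3$''), \emph{then} applies Theorem~\ref{thm:manyfiberedfaces} to obtain $N'$ with two inequivalent fibered faces, and finally observes via the elementary transfer inequality $b_1(N')\ge b_1(N)\ge 3$ that the Betti bound survives. You instead first produce the fibered faces and then boost $b_1$, which forces you to check that having two inequivalent fibered faces persists in further finite covers and that $N_1$ still satisfies the hypotheses of Corollary~\ref{cor:big betti}. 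Both orderings work; the paper's order is a little tidier precisely because ``$b_1\ge 3$'' is trivially stable under finite covers, whereas stability of ``$\ge 2$ inequivalent fibered faces'' requires the extra appeal to Proposition~\ref{prop:tnfinitecover} and Corollary~\ref{cor:pull qf}.
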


\begin{proof}
By Corollary \ref{cor:big betti}, $N$ admits
finite covers with arbitrarily large first Betti numbers.
We can thus without loss of generality assume that
$b_1(N)\geq 3$.

By Theorem \ref{thm:manyfiberedfaces}, there exists a finite cover
$N'\to N$ such that the Thurston norm ball of $N'$ has at
least $2$ non-equivalent fibered faces. A basic
transfer argument shows that $b_1(N')\geq b_1(N)\geq 3$.
By Corollary  \ref{cor:mcmullen}, we have that $\th(N')\geq 2$,
and we are done.
\end{proof}

We can now prove Theorem \ref{mainthmqk} in the case
that $N$ is irreducible.

\begin{theorem}
\label{thm:not graph mfd}
Let $N$ be an irreducible $3$-manifold with empty or toroidal
boundary.  If $N$ is not a graph manifold, then $\pi_1(N)$ is
not a \qp group.
\end{theorem}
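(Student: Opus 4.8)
The plan is to derive a contradiction from the assumption that $N$ is irreducible, not a graph manifold, yet $\pi_1(N)$ is \qsp. First I would invoke Theorem \ref{thm:fatalexander}: since $N$ is irreducible and not a graph manifold, there is a finite cover $N'\to N$ with $\th(N')\ge 2$ and $b_1(N')\ge 3$. The whole strategy is to play the lower bound $\th(N')\ge 2$ against the upper bound $\th\le 1$ coming from quasi-projectivity.

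The key point is that quasi-projectivity passes to finite covers. By Lemma \ref{lem:qpsubgroup}, the subgroup $\pi_1(N')\le\pi_1(N)$ is again a \qp group. Since $b_1(N')\ge 3$, in particular $b_1(\pi_1(N'))\ne 2$, so Theorem \ref{thm:deltaquasikahler}\eqref{q1} applies to $\pi_1(N')$ and gives $\th(\pi_1(N'))\le 1$. But $\th(N')=\th(\pi_1(N'))$ by the definition of the thickness of a $3$-manifold in terms of its Alexander polynomial (which depends only on the fundamental group), so $\th(N')\le 1$, contradicting $\th(N')\ge 2$. Hence $\pi_1(N)$ cannot be \qsp, and the theorem follows.

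There is really no hard analytic step here; the work has been front-loaded into Theorem \ref{thm:fatalexander} (which packages the Agol--Wise--Przytycki--Wise machinery, McMullen's norm inequality, and the passage to many fibered faces) and into Theorem \ref{thm:deltaquasikahler} (which packages Arapura's structure theorem for characteristic varieties). The only thing to be careful about is the bookkeeping of Betti numbers: one needs $b_1(N')\ge 3$ precisely so that the hypothesis $b_1(\pi)\ne 2$ of Theorem \ref{thm:deltaquasikahler} is met, which is why Theorem \ref{thm:fatalexander} was stated with that extra conclusion rather than merely $\th(N')\ge 2$. So the main obstacle, such as it is, is simply making sure the two black boxes are quoted with compatible hypotheses — everything else is a one-line comparison of two inequalities.

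Once this is in place, Theorem \ref{mainthmqk} follows in the irreducible case immediately, and the non-prime case will be handled separately (via the behaviour of Alexander polynomials under connected sum, using Lemma \ref{lem:thicknessadds}). The ``in particular'' statement about hyperbolic $3$-manifolds is then immediate, since a hyperbolic $3$-manifold with empty or toroidal boundary is irreducible and is not a graph manifold.
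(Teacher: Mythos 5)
Your proof is correct and is essentially identical to the paper's: both invoke Theorem~\ref{thm:fatalexander} to produce a finite cover $N'\to N$ with $\th(N')\geq 2$ and $b_1(N')\geq 3$, then use Lemma~\ref{lem:qpsubgroup} and Theorem~\ref{thm:deltaquasikahler} to force $\th(N')\leq 1$, a contradiction. Your remark about why the hypothesis $b_1(N')\geq 3$ is packaged into Theorem~\ref{thm:fatalexander} (to satisfy the $b_1\neq 2$ hypothesis of Theorem~\ref{thm:deltaquasikahler}) matches the intent of the paper exactly.
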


\begin{proof}
Suppose $\pi_1(N)$ is a \qp group.  By Theorem \ref{thm:fatalexander},
there exists a finite cover $N'\to N$ with $\th(N')\geq 2$ and
$b_1(N')\geq 3$.  By Lemma \ref{lem:qpsubgroup},
$\pi_1(N')$ is also a \qp group, which implies by
Theorem \ref{thm:deltaquasikahler} that either
$b_1(N')=2$ or $\th(N')\leq 1$.  We have thus
arrived at a contradiction.
\end{proof}

Let us draw an immediate corollary.

\begin{corollary}
\label{cor:hyp}
If $N$ is a hyperbolic $3$-manifold with empty or toroidal
boundary, then $\pi_1(N)$ is not a \qp group.
\end{corollary}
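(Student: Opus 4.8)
The plan is to deduce Corollary~\ref{cor:hyp} directly from Theorem~\ref{thm:not graph mfd}, which has just been proved. The only thing to check is that a hyperbolic $3$-manifold with empty or toroidal boundary satisfies the hypotheses of that theorem; namely, that it is irreducible and is not a graph manifold.

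First I would recall that a hyperbolic $3$-manifold $N$ (with empty or toroidal boundary) is irreducible: its universal cover is $\mathbb{H}^3$, so $\pi_2(N)=0$, and hence by the Sphere Theorem every embedded $2$-sphere in $N$ bounds a ball. Next I would observe that $N$ cannot be a graph manifold. Indeed, a graph manifold is, by definition, glued out of Seifert-fibered pieces along tori, and each such piece carries one of the non-hyperbolic geometries; more to the point, an irreducible graph manifold with empty or toroidal boundary admits no hyperbolic metric, since its JSJ decomposition is nontrivial or the manifold is Seifert fibered, while a hyperbolic manifold has empty JSJ decomposition and is atoroidal. Either way, $N$ is an irreducible $3$-manifold with empty or toroidal boundary which is not a graph manifold.

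Having verified the hypotheses, I would simply apply Theorem~\ref{thm:not graph mfd} to conclude that $\pi_1(N)$ is not a \qp group, which is the assertion of the corollary. I do not expect any real obstacle here: the statement is a genuine corollary, and the argument is the short observation above, perhaps compressed to a single sentence pointing out that a hyperbolic $3$-manifold is irreducible and, being atoroidal with trivial JSJ decomposition, is not a graph manifold.

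\begin{proof}
A hyperbolic $3$-manifold $N$ with empty or toroidal boundary is irreducible, since its universal cover is $\mathbb{H}^3$. Moreover, $N$ is atoroidal and has trivial JSJ decomposition, so it is not a graph manifold. The conclusion now follows directly from Theorem \ref{thm:not graph mfd}.
\end{proof}
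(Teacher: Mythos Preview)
Your proposal is correct and matches the paper's approach exactly: the paper states this as an ``immediate corollary'' of Theorem~\ref{thm:not graph mfd} without further proof, and your argument simply spells out why a hyperbolic $3$-manifold satisfies the hypotheses of that theorem.
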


\subsection{Reducible $3$-manifolds}
\label{section:nonprimeqk}
We turn now to the proof of Theorem \ref{mainthmqk}
for non-prime $3$-manifolds. We start out with a
straightforward observation.

Let $N$ be a $3$-manifold, and let $N=N_1\# \cdots \# N_s$
be its prime decomposition.   Set $H=H_1(N;\Z)/\tor$ and
$H_i=H_1(N_i;\Z)/\tor$. The Mayer--Vietoris sequence
with $\Z$-coefficients shows that the inclusion maps
induce an isomorphism
$H_1\oplus \cdots \oplus H_s\xrightarrow{\cong}H$.

\begin{lemma}
\label{lem:deltamultiply}
Let $N$ be a $3$-manifold which admits a decomposition
$N=N_1\# N_2\# N_3$, and identify $H$ with
$H_1\oplus H_2\oplus H_3$ as above.
 Suppose that $H_1$ and $H_2$ are non-zero. For $i=1,2$
denote by $r_i$ the rank of $H_1(N_i;\Z[H_i])$, and denote
by $r$ the rank of $H_1(N;\Z[H])$. Then there exists a non-zero
polynomial $f\in \Z[H]$ such that
\[
\Delta_{N}^{r}=\Delta_{N_1}^{r_1}\cdot \Delta_{N_2}^{r_2}\cdot f\in \Z[H]=\Z[H_1\oplus H_2\oplus H_3].
\]
In particular,
\[
\th(N)\geq \th(N_1)+\th(N_2).
\]

\end{lemma}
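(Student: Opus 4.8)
The plan is to compute the Alexander polynomial of the connected sum $N = N_1 \# N_2 \# N_3$ by relating the chain complex of $\widetilde{N}$ to those of the pieces via a Mayer--Vietoris argument, and then to extract the divisibility statement from the structure of the resulting long exact sequence. First I would set up the Mayer--Vietoris sequence for $\Z[H]$-coefficients associated to the decomposition $N = N_1' \cup N_2' \cup N_3'$, where $N_i'$ denotes the punctured piece $N_i \setminus B^3$ and the pieces are glued along $2$-spheres. Since the gluing spheres are simply connected, the covering $\widetilde{N}$ restricts over each $N_i'$ to a disjoint union of copies of a cover of $N_i'$ indexed by $H/H_i$ (using the isomorphism $H_1 \oplus H_2 \oplus H_3 \xrightarrow{\cong} H$ noted just above the lemma), so that $H_*(N_i'; \Z[H]) \cong H_*(N_i; \Z[H_i]) \otimes_{\Z[H_i]} \Z[H]$ in the relevant degrees, after accounting for the effect of removing a ball (which only changes $H_0$ and $H_2$, not $H_1$). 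The $\Z[H]$-homology of each gluing sphere $S^2$ lifted to $\widetilde{N}$ is $\Z[H]$ in degrees $0$ and $2$ and zero in degree $1$.

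Next I would read off from the Mayer--Vietoris long exact sequence a four-term exact sequence of $\Z[H]$-modules of the shape
\[
H_2(\text{spheres}) \longrightarrow \bigoplus_i H_1(N_i; \Z[H_i]) \otimes_{\Z[H_i]} \Z[H] \longrightarrow A_N \longrightarrow H_1(\text{spheres}) = 0,
\]
so that $A_N$ is a quotient of $B := \bigoplus_{i=1}^3 H_1(N_i; \Z[H_i]) \otimes_{\Z[H_i]} \Z[H]$ by the image of a map from a free module coming from the $H_2$ of the spheres. The key algebraic point is then the behavior of orders under such a presentation. Using Lemma \ref{lem:thicknessadds} (or its proof), the torsion order of $B$ is $\prod_i \ord_{\Z[H_i]}(\tor_{\Z[H_i]} H_1(N_i;\Z[H_i]))$, and by formula \eqref{equ:tu01} the top non-vanishing order $\ord^{r}$ of $A_N$ equals the torsion order of $\tor_{\Z[H]} A_N$ (up to the rank shift), where $r = \rank A_N$. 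Since $A_N$ is a quotient of $B$ by a finitely generated submodule, standard properties of elementary ideals under such extensions give that $\ord^{r}(A_N)$ differs from the torsion order of $B$ by a factor in $\Z[H]$; here one uses that the first and second pieces have non-zero $H_i$, so that the cyclotomic-type factors $\Delta_{N_i}^{r_i}$ genuinely contribute. This yields the factorization $\Delta_N^{r} = \Delta_{N_1}^{r_1} \cdot \Delta_{N_2}^{r_2} \cdot f$ for some non-zero $f \in \Z[H]$, where non-vanishing of $f$ follows because all three factors on the right are non-zero once one knows $\Delta_N^r \neq 0$ (i.e., $\tor A_N$ has the expected order), which in turn is forced by the exact sequence since $B$ has non-zero torsion order.

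Finally, the thickness inequality is immediate: by \eqref{eq:thx} we have $\th(N) = \dim \Newt(\Delta_N^r)$, and since the Newton polytope of a product is the Minkowski sum of the Newton polytopes of the factors, $\dim \Newt(\Delta_N^r) \geq \dim \Newt(\Delta_{N_1}^{r_1}) + \dim \Newt(\Delta_{N_2}^{r_2}) = \th(N_1) + \th(N_2)$, where the last equality again uses \eqref{eq:thx} together with Lemma \ref{lem:thicknessadds} applied to identify $\dim \Newt(\Delta_{N_i}^{r_i})$ with $\th(N_i)$ (the point being that $\Newt$ of the torsion order in $\Z[H]$ agrees with that in $\Z[H_i]$). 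The main obstacle I anticipate is the careful bookkeeping in the Mayer--Vietoris step: one must correctly identify the $\Z[H]$-module structure of $H_*(N_i'; \Z[H])$ in terms of induction from $\Z[H_i]$, handle the dimension shift caused by puncturing, and verify that the connecting maps from the $H_2$ of the $2$-spheres do not disturb the top order (only multiply it by the extra factor $f$) — this last assertion is where one leans on the hypothesis that $H_1$ and $H_2$ are non-zero and on the rank formula \eqref{equ:tu01}.
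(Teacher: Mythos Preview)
Your overall strategy matches the paper's, but the Mayer--Vietoris sequence is written in the wrong direction, and this is exactly the step that makes the argument work. In the long exact sequence for the cover $N=N_1'\cup N_2''\cup N_3'$ glued along two $2$-spheres, there is no boundary map $H_2(\text{spheres})\to \bigoplus_i H_1(N_i;\Z[H])$; the relevant portion is
\[
0=H_1(\text{spheres};\Z[H])\;\longrightarrow\;\bigoplus_{i=1}^3 H_1(N_i;\Z[H])\;\longrightarrow\;H_1(N;\Z[H])\;\longrightarrow\;H_0(\text{spheres};\Z[H])\cong \Z[H]^2 .
\]
So $B:=\bigoplus_i H_1(N_i;\Z[H_i])\otimes_{\Z[H_i]}\Z[H]$ injects into $A_N$, and the cokernel embeds in the free module $\Z[H]^2$; it is \emph{not} the case that $A_N$ is a quotient of $B$ by the image of a free module. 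Your later appeal to ``standard properties of elementary ideals under such extensions'' is therefore applied to the wrong short exact sequence, and as stated it does not yield the needed divisibility.

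With the correct direction the argument becomes clean and is precisely what the paper does: since $A_N/B$ is a submodule of a free $\Z[H]$-module, it is torsion-free, hence $\tor_{\Z[H]}B\hookrightarrow \tor_{\Z[H]}A_N$. Multiplicativity of orders in a short exact sequence of torsion $\Z[H]$-modules then gives
\[
\ord_{\Z[H]}(\tor_{\Z[H]}A_N)=\ord_{\Z[H]}(\tor_{\Z[H]}H_1(N_1;\Z[H]))\cdot \ord_{\Z[H]}(\tor_{\Z[H]}H_1(N_2;\Z[H]))\cdot f
\]
for some non-zero $f\in\Z[H]$, and combining this with Lemma~\ref{lem:thicknessadds} and \eqref{equ:tu01} yields $\Delta_N^{r}=\Delta_{N_1}^{r_1}\cdot\Delta_{N_2}^{r_2}\cdot f$. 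Your deduction of the thickness inequality from the factorization is fine, but note that the bare Minkowski-sum inequality $\dim\Newt(pq)\ge \dim\Newt(p)+\dim\Newt(q)$ is false in general; what makes it true here is that $\Newt(\Delta_{N_1}^{r_1})\subset H_1\otimes\Q$ and $\Newt(\Delta_{N_2}^{r_2})\subset H_2\otimes\Q$ lie in complementary summands of $H\otimes\Q$, which is exactly the content of Lemma~\ref{lem:thicknessadds}.
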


\begin{proof}
The Mayer--Vietoris sequence for
$N=N_1\# N_2\# N_3$ with coefficients in $\Z[H]$
yields an exact sequence,
\begin{equation}
\label{eq:mv}
\xymatrixcolsep{16pt}
\xymatrix{
0 \ar[r]& H_1(N_1;\Z[H])\oplus H_1(N_2;\Z[H])\oplus
H_1(N_3;\Z[H]) \ar[r]& H_1(N;\Z[H]) \ar[r]& \Z[H]^{2}},
\end{equation}
where the last term corresponds to the $0$-th homology
of the two gluing spheres.  From this, we
get a monomorphism
\begin{equation}
\label{eq:torcong}
\xymatrixcolsep{18pt}
\xymatrix{\tor_{\Z[H]}H_1(N_1;\Z[H])\oplus
\tor_{\Z[H]}H_1(N_2;\Z[H])\ar[r]&
\tor_{\Z[H]}H_1(N;\Z[H])}.
\end{equation}

Note that the alternating product of orders in an exact sequence
of torsion modules is $1$ (see e.g.~\cite[p.~57~and~60]{Hil02}).
Consequently, there exists a non-zero polynomial
$f\in \Z[H_3]$ such that
\begin{equation}
\label{eq:ordtor}
\begin{split}
\ord_{\Z[H]}(\tor_{\Z[H]}H_1(N;\Z[H]))=\ord_{\Z[H]}(\tor_{\Z[H]} H_1(N_1;\Z[H]))\cdot \hspace{0.4in} \\
\ord_{\Z[H]}(\tor_{\Z[H]}H_1(N_2;\Z[H]))\cdot f.
\end{split}
\end{equation}

Now note that $H_1(N_i;\Z[H])\cong H_1(N_i;\Z[H_i])
\otimes_{\Z[H_i]}\Z[H]$, for $i=1,2,3$.  Applying
Lemma \ref{lem:thicknessadds} and formula \eqref{equ:tu01},
it follows that
\begin{equation}
\label{eq:delnr}
 \Delta_{N}^{r}=\Delta_{N_1}^{r_1}\cdot \Delta_{N_2}^{r_2}\cdot f\in \Z[H].
\end{equation}
Using Lemma \ref{lem:thicknessadds} again, we conclude that
\begin{equation}
\label{eq:thsum}
\th(N)= \th(N_1)+\th(N_2)+\th(f)\geq \th(N_1)+\th(N_2),
\end{equation}
and this completes the proof.
\end{proof}

We now obtain the following result, which in particular implies
Theorem \ref{mainthmqk} for non-prime $3$-manifolds.

\begin{theorem}
\label{thm:qknonprime}
Let $N$ be a $3$-manifold with empty or toroidal boundary
which is not prime.  If $\pi_1(N)$ is a \qp group, then all
prime components of $N$ are closed graph manifolds
which are not virtually fibered.
\end{theorem}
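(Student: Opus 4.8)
The plan is to argue by contradiction, combining the multiplicativity of the Alexander polynomial under connected sum (Lemma \ref{lem:deltamultiply}) with the restrictions on Alexander polynomials of \qp groups (Theorem \ref{thm:deltaquasikahler}) and the structure theory of irreducible $3$-manifolds (Theorems \ref{thm:not graph mfd} and \ref{thm:akmwrfrs}). So suppose $N$ is not prime, write $N=N_1\#\cdots\#N_s$ with $s\ge 2$, and assume $\pi_1(N)$ is a \qp group. Since $\pi_1(N)=\pi_1(N_1)*\cdots*\pi_1(N_s)$, none of the $N_i$ can be $S^3$ (a free factor $\Z$ would have to appear, which is fine, but an $S^3$ summand is trivial and can be discarded), so we may assume every $N_i$ has non-trivial fundamental group. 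The first step is to reduce to the claim that each $N_i$ is a closed graph manifold.

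\medskip

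First I would observe that if some prime component $N_i$ is not a closed graph manifold, then passing to a suitable finite cover produces a contradiction. The subtlety is that a finite cover of $N$ need not respect the connected-sum decomposition in an obvious way, so one should instead pass to a finite cover of the offending component and then build a compatible cover of $N$. More precisely: if $N_i$ is not a closed graph manifold, then by Theorem \ref{thm:fatalexander} there is a finite cover $N_i'\to N_i$ with $\th(N_i')\ge 2$ and $b_1(N_i')\ge 3$; realize $N_i'$ via a finite-index subgroup of $\pi_1(N_i)$, and by taking a further finite-index normal subgroup if needed, extend this to a finite cover $N'\to N$ (using that finite-index subgroups of $\pi_1(N_i)$ give finite-index subgroups of the free product $\pi_1(N)$). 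The cover $N'$ is again not prime and contains $N_i'$ as (a piece assembled from copies of) a prime summand with $\th(N_i')\ge2$; in any case $N'$ has a prime decomposition one of whose summands has thickness $\ge 2$, and $b_1(N')\ge b_1(N_i')\ge 3$ by a transfer argument. Applying Lemma \ref{lem:deltamultiply} (splitting off that thick summand, plus at least one more non-trivial summand — here I need $s\ge 2$ and that the other summands are handled, possibly grouping them into the "$N_3$" slot), we get $\th(N')\ge 2$. But $\pi_1(N')$ is \qp by Lemma \ref{lem:qpsubgroup}, and $b_1(N')\ge 3$, so Theorem \ref{thm:deltaquasikahler} forces $\th(N')\le 1$, a contradiction. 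Hence every prime component of $N$ is a closed graph manifold.

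\medskip

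The remaining point is that no $N_i$ is virtually fibered. Suppose some $N_i$ is a virtually fibered closed graph manifold; by Lemma \ref{lem:deltamultiply} it suffices to show this forces some finite cover of $N$ to have thickness $\ge 2$ while retaining $b_1\ge 3$, again contradicting Theorem \ref{thm:deltaquasikahler}. The idea: replace $N_i$ by a finite cover $N_i'$ that fibers over $S^1$ with fiber of negative Euler characteristic (a graph-manifold fiber can be taken to be a surface of negative Euler characteristic once we are not in the degenerate $T^2\times I$ or $S^1\times D^2$ cases, which are excluded since $\pi_1(N)$ being a non-trivial free product rules out these as the only summand — and in any case those manifolds are not both closed and prime in the relevant sense). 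Then $\th(N_i')\ge 1$ by Corollary \ref{cor:mcmullen}\eqref{thick1}. To push thickness up to $2$, use that a reducible \qp manifold has at least two non-trivial summands to play with: take $N_j$ (with $j\ne i$) and note that if it too is virtually fibered we get a second contribution, and if it is not virtually fibered then — since it is a closed graph manifold by the previous step — we can still arrange a finite cover with $b_1\ge 1$ giving a nonzero Alexander polynomial; the multiplicativity of Lemma \ref{lem:deltamultiply} combined with $\th(N_i')\ge1$ and a second fibered summand yields $\th\ge 2$. Throughout, one keeps $b_1\ge 3$ by first applying Corollary \ref{cor:big betti} to blow up the Betti numbers before taking the fibered covers, and uses Lemma \ref{lem:qpsubgroup} to keep the \qp hypothesis. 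I expect the main obstacle to be exactly this bookkeeping: ensuring that the finite covers of the individual prime summands can be assembled into a single honest finite cover of $N$ whose prime decomposition visibly exhibits the two "thick"/fibered summands needed to invoke Lemma \ref{lem:deltamultiply}, and handling the borderline cases ($S^1\times D^2$, $T^2\times I$, torus bundles) where the fiber-Euler-characteristic argument degenerates — though these are precisely the cases excluded or trivially disposed of because $\pi_1(N)$ is a non-trivial free product and $N$ has empty or toroidal boundary.
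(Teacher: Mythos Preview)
Your second step has a genuine gap. To push thickness to $2$ you try to combine $\th(N_i')\ge 1$ from one fibered summand with some contribution from a second summand $N_j$. But when $N_j$ is not virtually fibered you only produce ``a finite cover with $b_1\ge 1$ giving a nonzero Alexander polynomial,'' and that does not force $\th(N_j')\ge 1$ (take $N_j'=S^1\times S^2$, say). So the argument never reaches $\th\ge 2$. The key device you are missing is the one the paper uses: retract $\pi_1(N)=\pi_1(N_1)*\pi_1(N_2)$ onto the \emph{other} factor $\pi_1(N_2)$ and then onto a nontrivial finite quotient $G$. The corresponding cover $M\to N$ contains $|G|$ prime summands diffeomorphic to $N_1$, so Lemma~\ref{lem:deltamultiply} gives $\th(M)\ge |G|\cdot\th(N_1)\ge 2$ and $b_1(M)\ge |G|\cdot b_1(N_1)$, with no thickness hypothesis on $N_2$ whatsoever. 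This is how one summand with $\th\ge 1$ is leveraged into a cover with $\th\ge 2$.

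There is also a borderline case you dismiss too quickly. A Sol-manifold is closed, prime, fibered with fiber $T^2$ of Euler characteristic zero, so Corollary~\ref{cor:mcmullen}\eqref{thick1} does not apply, and $b_1=1$ in every finite cover (note Sol-manifolds are excluded from Corollary~\ref{cor:big betti}). When $N_2=\R P^3$ the multiplication trick above only gives $|G|=2$, hence $b_1(M)=2$, exactly where Theorem~\ref{thm:deltaquasikahler}\eqref{q1} is silent. The paper disposes of $N_1\,\text{Sol}\,\#\,\R P^3$ by computing $\Delta_N$ directly and invoking the cyclotomic restriction Theorem~\ref{thm:deltaquasikahler}\eqref{q2}, which you never use. (A smaller issue: your first step cites Theorem~\ref{thm:fatalexander} for $N_i$ ``not a closed graph manifold,'' but that theorem requires $N_i$ not to be a graph manifold at all; graph manifolds with boundary slip through. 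The clean reduction is the paper's: by Corollary~\ref{cor:vfibered} anything that is not a closed graph manifold is already virtually fibered, so only the virtually-fibered case needs to be ruled out.)
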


\begin{proof}
In light of Corollary \ref{cor:vfibered}, it suffices to prove
the following:  If $N$ is a non-prime $3$-manifold with
empty or toroidal boundary, and $N$ has at least one
prime component which is virtually fibered, then
$\pi_1(N)$ not a \qp group.

Recall from Lemma \ref{lem:qpsubgroup} that a finite-index
subgroup of a \qp group is again \qsp. Thus,
after going to a finite cover if necessary, we may
assume that $N$ admits a decomposition
$N=N_1\# N_2$, where $N_1$ is fibered and $N_2\ne S^3$.

First suppose that $N_1$ is a Sol-manifold and $N_2=\R P^3$.
A straightforward calculation shows that
\begin{equation}
\label{eq:deln1}
\Delta_{N_1}=2(t-\l)(t-\l^{-1})\in \Z[H_1(N;\Z)/\tor]=\Z[\Z]=\zt,
\end{equation}
where $\l\ne \pm 1$ is a real number.  (The factor of $2$ comes
from the order of $H_1(\R P^3;\Z)=\Z_2$.)
It now follows from Theorem \ref{thm:deltaquasikahler}\eqref{q2}
and Lemma \ref{lem:deltamultiply} that $\pi_1(N)$ is not a \qp group.

Next, suppose that $N_1$ is not a Sol-manifold. If $N_1$
is covered by a torus bundle, then the fact that $N_1$ is not
a Sol-manifold implies that $N_1$ admits a finite cover with
$b_1\geq 2$ (see e.g.~\cite{AFW12} for details).  If $N_1$ is not
covered by a torus bundle, then it follows from Corollary \ref{cor:big betti}
that $N_1$ admits a finite cover with $b_1\geq 2$.
We can therefore without loss of generality assume that $b_1(N)\geq 2$.

Since $N_2$ is not the $3$-sphere, it follows from the Poincar\'e
Conjecture that $\pi_1(N_2)$ is non-trivial.  Furthermore, it follows
from the Geometrization Theorem that  $\pi_1(N_2)$ is residually
finite (see \cite{He87}). Therefore, there exists an epimorphism
$\pi_1(N_2)\surj G$ onto a non-trivial finite group.
Let $M\to N$ be the cover corresponding to
the epimorphism
\begin{equation}
\label{eq:epi}
\xymatrixcolsep{18pt}
\xymatrix{
\pi_1(N)=\pi_1(N_1)*\pi_1(N_2)\ar[r]& \pi_1(N_2)\ar[r]& G
}.
\end{equation}

Note that the above homomorphism is trivial on $\pi_1(N_1)$;
hence, $M$ contains at least $\abs{G}$ prime components
which are diffeomorphic to $N_1$.  Also note that $b_1(M)$ is
the sum of the Betti numbers of the prime components of $M$.
By the above, we see that $b_1(M)\ge 4$.
Furthermore, it follows from Lemma \ref{lem:deltamultiply} that
$\th(M)$ is the sum of the thicknesses of the prime components
of $M$.  Since $\th(N_1)\geq 1$, we conclude that
\begin{equation}
\label{eq:thmg}
\th(M)\geq \abs{G}\cdot \th(N_1)\geq \abs{G}\geq 2.
\end{equation}
It now follows from  Theorem \ref{thm:deltaquasikahler}
that $\pi_1(M)$ is not a \qp group.

Finally, suppose that $N_1$ is a Sol-manifold but that $N_2\ne \R P^3$.
Then it follows from the Geometrization Theorem that $\pi_1(N_2)$ has
more than two elements. Since $\pi_1(N_2)$ is residually finite there
exists an epimorphism $\pi_1(N_2)\surj G$ onto a  finite group with at
least three elements.  Again, let $M\to N$ be the cover
corresponding to the epimorphism \eqref{eq:epi}.
As above, we note that $M$ has at least three prime components
homeomorphic to $N_1$, which then implies that $b_1(M)\geq 3$ and
$\th(M)\geq 3$. Applying again Theorem \ref{thm:deltaquasikahler},
we conclude that $\pi_1(M)$ is not a \qp group.
\end{proof}

\section{Open questions}
\label{sect:questions}

We conclude this paper with some open  questions.
First recall that a K\"ahler group is never the free
product of two non-trivial groups.  On the other
hand, free groups are \qp groups. The following
question is open to the best of our knowledge.

\begin{question}
\label{quest:free prod}
Suppose $A$ and $B$ are groups, such that the free product
$A*B$ is a \qp group.  Does it follow that $A$ and $B$
are already \qp groups?
\end{question}

We showed that if $N$ is an irreducible $3$-manifold with
empty or toroidal boundary such that $\pi_1(N)$ is a
\qp group, then $N$ is a graph manifold.  Not all
graph manifold groups are \qp groups, though,
as the next example (which we already encountered in the
proof of Theorem \ref{thm:qknonprime}) shows.

\begin{example}
\label{ex:sol}
Suppose $N$ is a torus bundle whose monodromy
has eigenvalues $\l$ and $\l^{-1}$, for some real number
$\l>1$.  Then $N$ is a Sol manifold, and thus a graph manifold.
On the other hand, $b_1(N)=1$, and the Alexander polynomial
$\Delta^1_N$ equals $(t-\l)(t-\l^{-1})$.  Hence, by
Theorem \ref{thm:deltaquasikahler}\eqref{q2}, the
fundamental group of $N$ is not a \qp group.
\end{example}

\begin{question}
\label{quest:graph qk}
For which graph manifolds is the fundamental group a \qp group?
\end{question}

Finally, the case of connected sums of graph manifolds has
also not been completely settled.  In light of the results in
\S\ref{section:nonprimeqk}, we venture the following conjecture.

\begin{conjecture}
\label{conj:conn sum}
Let $N$ be a compact $3$-manifold with empty or toroidal
boundary.  If $\pi_1(N)$ is a \qp group and if $N$ is not prime,
then $N$ is the connected sum of spherical $3$-manifolds
and manifolds which are either diffeomorphic to
$S^1\times D^2$, $S^1\times S^1\times [0,1]$,
or the $3$-torus.
\end{conjecture}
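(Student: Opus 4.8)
The plan is to start from Theorem \ref{thm:qknonprime}: if $\pi_1(N)$ is \qsp and $N=N_1\#\cdots\#N_s$ is the prime decomposition (with $s\ge 2$ and not every $N_i=S^3$), then by that theorem together with the thickness argument in its proof, each prime summand $N_i$ is a graph manifold no finite cover of which fibers over $S^1$ with a fiber of negative Euler characteristic. The conjecture is thereby turned into a classification problem: pin down which such $N_i$ can occur, the claim being that they are exactly spherical manifolds, $S^1\times D^2$, $S^1\times S^1\times I$, and $T^3$ (with the two bounded summands appearing only when $\partial N\neq\emptyset$; here one should presumably also allow $S^1\times S^2$ summands, which realize the \qp groups $F_k$).

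The first step is to extract everything possible from the machinery of \S\S\ref{section:alexpoly}--\ref{sect:qp}. By Lemma \ref{lem:qpsubgroup} every finite-index subgroup of $\pi_1(N)$ is again \qsp, and, exactly as in the last two paragraphs of the proof of Theorem \ref{thm:qknonprime}, residual finiteness of the $\pi_1(N_j)$ lets one pass to finite covers $M\to N$ in which a chosen summand $N_i$, or a chosen finite cover of it, is lifted to an arbitrarily large number $t$ of copies. Lemma \ref{lem:deltamultiply} then gives $\th(M)\ge t\cdot\th(N_i)$ together with $b_1(M)\ge t\cdot b_1(N_i)$, and controls the top Alexander polynomial multiplicatively; feeding this into Theorem \ref{thm:deltaquasikahler}, whose hypothesis $b_1\ne 2$ is met once $t$ is large, forces every finite cover of $N_i$ to have thickness $0$ and to have top Alexander polynomial equal to a nonzero integer times a product of single-variable cyclotomic polynomials. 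By Corollary \ref{cor:mcmullen}\eqref{thick1} the first condition already eliminates all graph manifolds with a hyperbolic JSJ piece or a Seifert piece over a hyperbolic base of zero Euler number, the Sol-manifolds (whose Alexander polynomial $(t-\l)(t-\l^{-1})$ has thickness one), the flat manifolds with non-trivial finite-order monodromy (whose Alexander polynomial $\det(tI-A)$ is cyclotomic but again of thickness one), and — running the same argument on a suitable double cover — the Hantzsche--Wendt flat manifolds. What remains is the family of graph manifolds carrying, virtually, no incompressible surface of negative Euler characteristic: spherical manifolds, $T^3$, $S^1\times S^2$, the bounded $S^1\times D^2$ and $S^1\times S^1\times I$, the Nil-manifolds, and the graph manifolds plumbed from Seifert pieces of nonzero Euler number over Euclidean or hyperbolic base orbifolds.

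The conjecture thus reduces to its hardest point: showing that a Nil-manifold, or a plumbing of nonzero-Euler-number Seifert pieces over surfaces of positive genus, cannot occur as a proper summand. The difficulty is that each such manifold has \emph{vanishing thickness and constant Alexander polynomial in every finite cover}, so none of the invariants used in this paper sees it; worse still, its fundamental group is itself \qsp, being realized by the total space of a $\C^{*}$-bundle over a (possibly orbifold) complex curve. The statement to be proved is therefore purely about free products — no such group is a proper free factor of a \qp group — a sharpened form of Question \ref{quest:free prod} (recall that a K\"ahler group cannot split as a non-trivial free product at all), and I expect it to require genuinely new input: either Hodge-theoretic information invisible to order ideals (the weight filtration on $H^{1}$ of the Malcev completion, or the non-vanishing triple Massey products forced by the central $\Z$ of the Seifert fibration), or a careful study of the \emph{higher} characteristic varieties $\V_k(\pi_1(N))$ through Corollary \ref{cor:ttk}, in which the presence of such a summand ought to force a forbidden positive-dimensional intersection of translated subtori. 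Producing that obstruction is, to my mind, the real heart of the matter, and the reason the statement is a conjecture rather than a theorem.
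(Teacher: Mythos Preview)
The statement you are addressing is labeled \emph{Conjecture} in the paper and sits in the final section, ``Open questions''. The authors do not prove it; they record it as a natural sharpening of Theorem~\ref{thm:qknonprime} and leave it open. There is thus no proof in the paper to compare your proposal against.

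Your write-up is not a proof either, and you acknowledge as much in its last paragraph. What you have done is push the available machinery---Lemma~\ref{lem:deltamultiply}, Theorem~\ref{thm:deltaquasikahler}, and the replication-by-covers trick from the proof of Theorem~\ref{thm:qknonprime}---to its limit, and then correctly isolate the residual case that these tools cannot reach: closed graph manifolds (Nil-manifolds, and more generally Seifert or graph pieces of nonzero Euler number) whose Alexander invariants are constant in every finite cover, so that neither the thickness bound nor the cyclotomic condition bites. Your diagnosis that ruling these out as free factors of a \qp group would need genuinely new input (Hodge-theoretic, Massey-product, or higher-$\V_k$ information) is exactly why the authors call this a conjecture. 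A couple of your intermediate claims are a bit loose---for instance, Theorem~\ref{thm:qknonprime} already forces all prime summands to be \emph{closed} and not virtually fibered, so the bounded pieces $S^1\times D^2$ and $S^1\times S^1\times I$ are in tension with that statement as written, and your parenthetical about $S^1\times S^2$ is well taken---but these are issues with the formulation of the conjecture itself, not with your reasoning. As a proof attempt, however, the proposal is by your own admission incomplete: the obstruction you gesture at in the final sentences is speculative and not carried out.
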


Note that the groups of the prime $3$-manifolds listed above
are either finite groups, and thus projective, or finitely generated
free abelian groups, and thus quasi-projective.

We finish with one more question.

\begin{question}
\label{quest:qp-qk}
Can the statements we prove here regarding \qsp, $3$-manifold groups
be extended to the analogous statements for \qk, $3$-manifold groups?
\end{question}

If one could prove Theorem \ref{thm:arapura} for an arbitrary
quasi-K\"ahler manifold, the answer to this question would be yes:
all our remaining arguments would then go through in this
wider generality.

\begin{ack}
The second author thanks the University of Warwick,
the Max Planck Institute for Mathematics in Bonn,
and the University of Sydney for their support
and hospitality while part of this work was carried out.
\end{ack}

\newcommand{\arxiv}[1]
{\texttt{\href{http://arxiv.org/abs/#1}{arXiv:#1}}}
\newcommand{\arx}[1]
{\texttt{\href{http://arxiv.org/abs/#1}{arXiv:}}
\texttt{\href{http://arxiv.org/abs/#1}{#1}}}
\newcommand{\doi}[1]
{\texttt{\href{http://dx.doi.org/#1}{doi:#1}}}
\renewcommand{\MR}[1]
{\href{http://www.ams.org/mathscinet-getitem?mr=#1}{MR#1}}
\newcommand{\MRh}[2]
{\href{http://www.ams.org/mathscinet-getitem?mr=#1}{MR#1 (#2)}}

\end{document}